\documentclass[
reqno]{amsart}
\usepackage{latexsym,amsmath,amssymb,amscd}
\usepackage[all]{xy}
\textwidth = 5.35 in
\textheight = 8.3 in

\def\today{\ifcase \month \or
   January \or February \or March \or April \or
   May \or June \or July \or August \or
   September \or October \or November \or December \fi
   \space\number\day , \number\year}

\setcounter{tocdepth}{1}
\makeatletter
  \newcommand\@dotsep{4.5}
  \def\@tocline#1#2#3#4#5#6#7{\relax
     \ifnum #1>\c@tocdepth 
     \else
     \par \addpenalty\@secpenalty\addvspace{#2}%
     \begingroup \hyphenpenalty\@M
     \@ifempty{#4}{%
     \@tempdima\csname r@tocindent\number#1\endcsname\relax
        }{%
         \@tempdima#4\relax
           }%
      \parindent\z@ \leftskip#3\relax \advance\leftskip\@tempdima\relax
      \rightskip\@pnumwidth plus1em \parfillskip-\@pnumwidth
       #5\leavevmode\hskip-\@tempdima #6\relax
       \leaders\hbox{$\m@th
       \mkern \@dotsep mu\hbox{.}\mkern \@dotsep mu$}\hfill
       \hbox to\@pnumwidth{\@tocpagenum{#7}}\par
       \nobreak
        \endgroup
         \fi}
\makeatother

\begin{document}

\makeatletter
\@addtoreset{figure}{section}
\def\thefigure{\thesection.\@arabic\c@figure}
\def\fps@figure{h,t}
\@addtoreset{table}{bsection}

\def\thetable{\thesection.\@arabic\c@table}
\def\fps@table{h, t}
\@addtoreset{equation}{section}
\def\theequation{
\arabic{equation}}
\makeatother

\newcommand{\bfi}{\bfseries\itshape}

\newtheorem{theorem}{Theorem}
\newtheorem{acknowledgment}[theorem]{Acknowledgment}
\newtheorem{algorithm}[theorem]{Algorithm}
\newtheorem{axiom}[theorem]{Axiom}
\newtheorem{case}[theorem]{Case}
\newtheorem{claim}[theorem]{Claim}
\newtheorem{conclusion}[theorem]{Conclusion}
\newtheorem{condition}[theorem]{Condition}
\newtheorem{conjecture}[theorem]{Conjecture}
\newtheorem{construction}[theorem]{Construction}
\newtheorem{corollary}[theorem]{Corollary}
\newtheorem{criterion}[theorem]{Criterion}
\newtheorem{data}[theorem]{Data}
\newtheorem{definition}[theorem]{Definition}
\newtheorem{example}[theorem]{Example}
\newtheorem{lemma}[theorem]{Lemma}
\newtheorem{notation}[theorem]{Notation}
\newtheorem{problem}[theorem]{Problem}
\newtheorem{proposition}[theorem]{Proposition}
\newtheorem{question}[theorem]{Question}
\newtheorem{remark}[theorem]{Remark}
\newtheorem{setting}[theorem]{Setting}
\numberwithin{theorem}{section}
\numberwithin{equation}{section}

\newcommand{\todo}[1]{\vspace{5 mm}\par \noindent
\framebox{\begin{minipage}[c]{0.85 \textwidth}
\tt #1 \end{minipage}}\vspace{5 mm}\par}

\renewcommand{\1}{{\bf 1}}

\newcommand{\hotimes}{\widehat\otimes}

\newcommand{\Ad}{{\rm Ad}}
\newcommand{\Alt}{{\rm Alt}\,}
\newcommand{\Ci}{{\mathcal C}^\infty}
\newcommand{\comp}{\circ}
\newcommand{\wt}{\widetilde}

\newcommand{\C}{\text{\bf C}}
\newcommand{\D}{\text{\bf D}}
\newcommand{\Hb}{\text{\bf H}}
\newcommand{\N}{\text{\bf N}}
\newcommand{\R}{\text{\bf R}}
\newcommand{\T}{\text{\bf T}}
\newcommand{\Ub}{\text{\bf U}}

\newcommand{\ph}{\text{\bf P}}
\newcommand{\de}{{\rm d}}
\newcommand{\ev}{{\rm ev}}
\newcommand{\fimes}{\mathop{\times}\limits}
\newcommand{\id}{{\rm id}}
\newcommand{\ie}{{\rm i}}
\newcommand{\End}{{\rm End}\,}
\newcommand{\Gr}{{\rm Gr}}
\newcommand{\GL}{{\rm GL}}
\newcommand{\Hilb}{{\bf Hilb}\,}
\newcommand{\Hom}{{\rm Hom}}
\renewcommand{\Im}{{\rm Im}}
\newcommand{\Ker}{{\rm Ker}\,}
\newcommand{\Lie}{\textbf{L}}
\newcommand{\lf}{{\rm l}}
\newcommand{\Loc}{{\rm Loc}\,}
\newcommand{\pr}{{\rm pr}}
\newcommand{\Ran}{{\rm Ran}\,}
\newcommand{\supp}{{\rm supp}\,}

\newcommand{\Tr}{{\rm Tr}\,}
\newcommand{\Tran}{\textbf{Trans}}

\newcommand{\CC}{{\mathbb C}}
\newcommand{\RR}{{\mathbb R}}
\newcommand{\NN}{{\mathbb N}}

\newcommand{\G}{{\rm G}}
\newcommand{\U}{{\rm U}}
\newcommand{\Gl}{{\rm GL}}
\newcommand{\SL}{{\rm SL}}
\newcommand{\SU}{{\rm SU}}
\newcommand{\VB}{{\rm VB}}

\newcommand{\Ac}{{\mathcal A}}
\newcommand{\Bc}{{\mathcal B}}
\newcommand{\Cc}{{\mathcal C}}
\newcommand{\Dc}{{\mathcal D}}
\newcommand{\Ec}{{\mathcal E}}
\newcommand{\Fc}{{\mathcal F}}
\newcommand{\Gc}{{\mathcal G}}
\newcommand{\Hc}{{\mathcal H}}
\newcommand{\Kc}{{\mathcal K}}
\newcommand{\Nc}{{\mathcal N}}
\newcommand{\Oc}{{\mathcal O}}
\newcommand{\Pc}{{\mathcal P}}
\newcommand{\Qc}{{\mathcal Q}}
\newcommand{\Rc}{{\mathcal R}}
\newcommand{\Sc}{{\mathcal S}}
\newcommand{\Tc}{{\mathcal T}}
\newcommand{\Uc}{{\mathcal U}}
\newcommand{\Vc}{{\mathcal V}}
\newcommand{\Xc}{{\mathcal X}}
\newcommand{\Yc}{{\mathcal Y}}
\newcommand{\Zc}{{\mathcal Z}}
\newcommand{\Ag}{{\mathfrak A}}
\renewcommand{\gg}{{\mathfrak g}}
\newcommand{\hg}{{\mathfrak h}}
\newcommand{\mg}{{\mathfrak m}}
\newcommand{\nng}{{\mathfrak n}}
\newcommand{\pg}{{\mathfrak p}}
\newcommand{\Gg}{{\mathfrak g}}
\newcommand{\Lg}{{\mathfrak L}}
\newcommand{\Sg}{{\mathfrak S}}
\newcommand{\Ug}{{\mathfrak u}}

\markboth{}{}

\makeatletter
\title[Universal enveloping algebras in a topological setting]
{On universal enveloping algebras\\ in a topological setting
}
\author{Daniel Belti\c t\u a and Mihai Nicolae}
\address{Institute of Mathematics ``Simion
Stoilow'' of the Romanian Academy,
P.O. Box 1-764, Bucharest, Romania}
\email{Daniel.Beltita@imar.ro, beltita@gmail.com}
\email{mihaitaiulian85@yahoo.com}
\thanks{This research was partially supported from the Grant of the Romanian National Authority for Scientific Research, 
CNCS-UEFISCDI, project number PN-II-ID-PCE-2011-3-0131. 
The first-named author also acknowledges partial support
from the Project MTM2010-16679, DGI-FEDER, of the MCYT, Spain.}

\keywords{infinite-dimensional Lie group, local operator, one-parameter subgroup}
\subjclass[2010]{Primary 22A10; Secondary 22E65, 22E66, 17B65}
\date{February 25, 2014}
\makeatother

\begin{abstract}
We establish the exponential law for suitably topologies on 
spaces of vector-valued smooth functions on topological groups, 
where smoothness is defined by using differentiability along continuous one-parameter subgroups. 
As an application, we investigate 
the canonical correspondences between the universal enveloping algebra, 
the invariant local operators, 
and the convolution algebra of distributions supported at the unit element
 of any finite-dimensional Lie group, 
when one passes from finite-dimensional Lie groups to pre-Lie groups.  
The latter class includes for instance 
any locally compact groups, Banach-Lie groups, additive groups underlying locally convex vector spaces, 
and also mapping groups consisting of rapidly decreasing Lie group-valued functions. 
\end{abstract}

\maketitle

\tableofcontents

\section{Introduction}

It is well-known that Lie theory and the related representation theory have been successfully 
extended much beyond 
the classical setting of finite-dimensional real Lie groups, 
and this research area now includes locally compact groups (\cite{HM07}, \cite{HM13}), 
Lie groups modeled on Banach spaces or even on locally convex spaces (\cite{KM97}, \cite{Be06}, \cite{Ne06}), 
and some other classes of topological groups which may not be locally compact 
(\cite{BCR81}, \cite{Gl02b}, \cite{HM05}). 
The differential calculus on topological groups, 
involving functions which are smooth along the one-parameter subgroups 
(Definition~\ref{aux4}), 
plays an important role for these extensions of Lie theory and 
has recently found remarkable applications also to supergroups and their representation theory (\cite{NS12}, \cite{NS13}).  
We have merely mentioned here a very few references that are closer related to the topics of our paper.
 
On the other hand, as one can see for instance in \cite{Wa72} or \cite{Pe94}, 
a key fact in harmonic analysis and representation theory is that 
the universal enveloping algebras of finite-dimensional Lie algebras can be realized 
by linear functionals or operators on spaces of smooth functions on the corresponding 
Lie groups, for instance as 
convolution algebras of distributions supported at the unit element 
or as invariant linear differential operators.  
It is then natural to seek for   
such realizations beyond the classical setting of finite dimensional Lie groups, 
with motivation coming from the representation theory of groups of the aforementioned types.  
In the present paper we begin an investigation on that question, 
oriented towards a pretty large class of topological groups which have sufficiently many 
one-parameter subgroups,  
namely the pre-Lie groups; 
see  Definition~\ref{Def3.1} and Examples \ref{ex1}--\ref{ex4} below.  
(A sequel paper will deal with the situation when the domain~$\RR$ of the one-parameter groups 
is replaced by suitable subsets of more general topological fields, 
to some extent in the spirit of \cite{BGN04}, \cite{BN05}, and \cite{Ber08}.) 

To this end,  
one needs a suitable notion of distributions with compact support, that is, continuous linear functionals on the space of smooth functions of the group under consideration. 
While spaces of smooth functions on any topological group 
were already studied in the earlier literature, 
one still needs to give these function spaces a topology adequate for the purposes 
of turning their topological duals into associative algebras 
which act on function spaces by the natural operation of convolution. 
It should be pointed out here that although the convolution of functions on a topological group 
requires some Haar measure on that group, 
this is not necessary for the convolution of functions with distributions or measures 
(see Definition~\ref{conv_def}).

One of the main technical novelties of our paper is 
the construction of a suitable topology on the space of smooth functions 
on any topological group and with values in any locally convex space $\Yc$, 
for which for arbitrary topological groups $G$ and $H$ the exponential law for smooth functions 
$$\Ci(H\times G,\Yc)\simeq\Ci(H,\Ci(G,\Yc)) $$
holds true (see Theorem~\ref{P 4.15} and Remark~\ref{law} below). 
By using that fact, we then prove that for any pre-Lie group $G$, 
the convolution with distributions with compact support 
(that is, linear functionals which are continuous for the aforementioned suitable topology) 
does preserve the space of smooth functions $\Ci(G)$ 
(Proposition~\ref{P 4.28}). 
By focusing on distributions supported at $\1\in G$, 
we can thus identify them with continuous linear operators on $\Ci(G)$ 
which commute with the left translations and are local, 
in the sense that they do not increase the support of functions 
(Theorem~\ref{P 4.29}). 
Recall that Peetre's theorem from \cite{Pe60} ensures that 
the local operators on smooth manifolds are precisely the differential operators, not necessarily of finite order.  
If $G$ is any finite-dimensional Lie group, 
then we recover the natural correspondence between the distributions supported at 
$\1\in G$ and the left invariant differential operators on~$G$. 

The topology that we introduce on any function space $\Ci(G,\Yc)$ agrees with 
the topology of uniform convergence of functions and their derivatives 
if $G$ is any finite-dimensional real Lie group.  
However,  
unlike the most constructions of similar topologies on spaces of test functions 
from the literature, 
our construction (Definition~\ref{distrib}) does not need the group $G$ to be locally compact. 
In fact, spaces of test functions, distributions, and universal enveloping algebras 
were already investigated 
on locally compact groups which were not necessarily Lie groups, for instance:  
\begin{itemize}
\item Basic distribution theory on abelian locally compact groups by using differentiability along one-parameter subgroups 
was developed in \cite{Ri53}. 
 \item Let $G$ be any topological group which is a projective limit of Lie groups. 
Under the additional hypotheses that $G$ is simply connected, locally compact, and separable, 
one endowed the space $\Ci(G)$ in \cite{Ka59}, \cite{Ka61}, \cite{Ma61}, \cite{Br61},  \cite[Sect. 2]{BC75} 
with the topology of a locally convex space,  
which is nuclear if and only if 
every quotient group of $G$ whose Lie algebra is finite-dimensional is necessarily a Lie group,  
as proved in \cite[S\"atze 3.3, 3.5]{BC75}. 
\item Some nuclear function spaces on locally compact groups 
that do not use approximations by Lie groups were constructed in \cite{Py74}.  
\item Universal enveloping algebras of separable locally compact groups which are projective limits of Lie groups 
were studied in \cite{Br61}, \cite{MM64}, and \cite{MM65}. 
\item More recently, differential operators and their relation to distributions and convolutions 
on locally compact groups were also studied in \cite{Ed88} and \cite{Ak95a}. 
\end{itemize}

Our article is organized as follows: 
In Section~\ref{Sect2} we provide some basic definitions and auxiliary results 
from the differential calculus on topological groups. 
Section~\ref{Sect3} introduces the convolution of smooth functions with compactly supported distributions 
and states one of the main problems which motivated the present investigation 
(Problem~\ref{probl1}).
Section~\ref{Sect4} is devoted to proving the exponential law for smooth functions on topological groups 
(Theorem~\ref{P 4.15}), which is our main technical result. 
Finally, in Section~\ref{Sect5} we use that technical result 
for establishing the structure of invariant local operators 
(Theorem~\ref{P 4.29}).

\subsection*{General notation}
Throughout the present paper we denote by $G$, $H$ arbitrary topological groups, unless otherwise mentioned. 
We will assume that the topology of any topological group is separated. 
For any topological spaces $T$ and $S$ we denote by $\Cc(T,S)$ the set of all continuous maps from $T$ into $S$. 

\section{Preliminaries}\label{Sect2}

This section presents some ideas and notions of Lie theory that play a key role in the present paper. 
Our basic references for Lie theory of topological groups are \cite{BCR81}, \cite{HM05}, and \cite{HM07}.

\subsection*{The adjoint action of a topological group}

Let $G$ be any topological group with 
the set of neighborhoods of $\1\in G$ denoted by $\Vc_G(\1)$. 
Define 
$$\Lg(G)=\{\gamma\in\Cc({\mathbb R},G)\mid 
(\forall t,s\in{\mathbb R})\quad \gamma(t+s)=\gamma(t)\gamma(s)\}.$$ 
We endow $\Lg(G)$ with the topology of uniform convergence on the compact subsets of~${\mathbb R}$. 
It can be described by neighborhood bases as follows. 
For arbitrary $n\in{\mathbb N}$ and $U\in\Vc_G(\1)$ denote 
$$W_{n,U}=\{(\gamma_1,\gamma_2)\in\Lg(G)\times\Lg(G)\mid 
(\forall t\in[-n,n])\quad \gamma_2(t)\gamma_1(t)^{-1}\in U\}. 
$$
For every $\gamma_1\in\Lg(G)$ 
define 
$W_{n,U}(\gamma_1)=\{\gamma_2\in\Lg(G)\mid (\gamma_1,\gamma_2)\in W_{n,U}\}$. 
Then there exists a unique topology on $\Lg(G)$ with the property 
that for each $\gamma\in\Lg(G)$ the family 
$\{W_{n,U}(\gamma)\mid n\in{\mathbb N}, \ U\in\Vc_G(\1)\}$ 
is a fundamental system of neighborhoods of~$\gamma$.

\begin{definition}
\normalfont
The \emph{adjoint action} of the topological group $G$ is the mapping 
$$\Ad_G\colon G\times\Lg(G)\to\Lg(G),\quad (g,\gamma)\mapsto\Ad_G(g)\gamma:=g\gamma(\cdot)g^{-1}. $$
Since the action of $G$ on itself by inner automorphisms 
$G\times G\to G$, $(g,h)\mapsto ghg^{-1}$, is continuous, 
it follows that the above mapping $\Ad_G$ indeed takes values in $\Lg(G)$ and is a group action. 
\end{definition}

We now recall the following result for later use: 

\begin{lemma}\label{0141}
The adjoint action of every topological group is a continuous mapping. 
\end{lemma}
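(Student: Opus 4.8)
The plan is to verify continuity of $\Ad_G$ directly at an arbitrary point $(g_0,\gamma_0)\in G\times\Lg(G)$, using the neighborhood basis described before the definition. Fix $n\in\NN$ and $U\in\Vc_G(\1)$; by that basis it is enough to produce a neighborhood of $(g_0,\gamma_0)$ whose image under $\Ad_G$ lies in $W_{n,U}(\Ad_G(g_0)\gamma_0)$, i.e.\ such that
$$ g\gamma(t)g^{-1}\bigl(g_0\gamma_0(t)g_0^{-1}\bigr)^{-1}\in U \qquad\text{for all } t\in[-n,n]. $$
Write $c\colon G\times G\to G$, $c(a,b)=aba^{-1}$, for the conjugation map (continuous, since $G$ acts on itself by inner automorphisms), and put $\delta(t):=\gamma(t)\gamma_0(t)^{-1}$. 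The key algebraic step is the decomposition
$$ c(g,\gamma(t))\,c(g_0,\gamma_0(t))^{-1} \;=\; c\bigl(g,\delta(t)\bigr)\cdot\bigl(c(g,\gamma_0(t))\,c(g_0,\gamma_0(t))^{-1}\bigr), $$
so that, having chosen $U_1,U_2\in\Vc_G(\1)$ with $U_1U_2\subseteq U$ (possible by continuity of multiplication at $(\1,\1)$), it suffices to keep the first factor in $U_1$ and the second in $U_2$, uniformly for $t\in[-n,n]$.

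For the first factor I would use that $c$ is continuous and $c(g_0,\1)=\1$: there are a neighborhood $V_1$ of $g_0$ and $U_1'\in\Vc_G(\1)$ with $c(V_1\times U_1')\subseteq U_1$. This controls the first factor \emph{uniformly in} $g\in V_1$: whenever $\gamma\in W_{n,U_1'}(\gamma_0)$ one has $\delta(t)\in U_1'$ for every $t\in[-n,n]$, hence $c(g,\delta(t))\in U_1$. For the second factor I would exploit that $C:=\gamma_0([-n,n])$ is a \emph{compact} subset of $G$. The map $\psi\colon G\times G\to G$, $\psi(g,h)=c(g,h)c(g_0,h)^{-1}=ghg^{-1}g_0h^{-1}g_0^{-1}$, is continuous and satisfies $\psi(g_0,h)=\1$ for all $h$; covering $\{g_0\}\times C$ by boxes on which $\psi$ takes values in $U_2$ and passing to a finite subcover (a tube-lemma argument) yields a neighborhood $V_2$ of $g_0$ with $\psi(V_2\times C)\subseteq U_2$, i.e.\ $c(g,\gamma_0(t))c(g_0,\gamma_0(t))^{-1}\in U_2$ for all $g\in V_2$ and $t\in[-n,n]$.

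Combining the two bounds, set $V:=V_1\cap V_2$ and take $W_{n,U_1'}(\gamma_0)$ as the neighborhood of $\gamma_0$ in $\Lg(G)$. Then for every $(g,\gamma)\in V\times W_{n,U_1'}(\gamma_0)$ and every $t\in[-n,n]$ the displayed product lies in $U_1U_2\subseteq U$, which is exactly the statement $\Ad_G(g)\gamma\in W_{n,U}(\Ad_G(g_0)\gamma_0)$. Since $n\in\NN$ and $U\in\Vc_G(\1)$ were arbitrary and the sets $W_{n,U}(\Ad_G(g_0)\gamma_0)$ form a fundamental system of neighborhoods of $\Ad_G(g_0)\gamma_0$, this establishes continuity of $\Ad_G$ at $(g_0,\gamma_0)$, and hence everywhere.

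The main obstacle I anticipate is that $G$ is not assumed locally compact, so one cannot replace neighborhoods in $G$ by compact sets; the only compactness at hand is that of $\gamma_0([-n,n])$, which must be used precisely in the tube-lemma step for the second factor, while the first factor is handled by the fact that continuity of conjugation at $(g_0,\1)$ is automatically uniform in the first variable over a suitable neighborhood of $g_0$. Care is also needed to track the parameter $n$ consistently (the same $[-n,n]$ on both sides) so that the two estimates can be combined without enlarging the interval.
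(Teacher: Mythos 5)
Your argument is correct. The paper itself does not prove this lemma at all: it simply cites \cite[Lemma 0.1.4.1]{BCR81}, so there is no in-paper proof to compare against, and what you have written is a complete, self-contained substitute. The key identity
$$ c(g,\gamma(t))\,c(g_0,\gamma_0(t))^{-1}=c(g,\delta(t))\cdot\bigl(c(g,\gamma_0(t))\,c(g_0,\gamma_0(t))^{-1}\bigr), \qquad \delta(t)=\gamma(t)\gamma_0(t)^{-1}, $$
checks out (the inner $g^{-1}g$ cancels against the $\gamma_0(t)^{-1}\gamma_0(t)$ pair), and the two estimates are matched to the two factors in exactly the right way: the first factor is controlled uniformly in $t$ by continuity of conjugation at $(g_0,\1)$ together with the hypothesis $\gamma\in W_{n,U_1'}(\gamma_0)$, and the second factor, which depends only on $\gamma_0$, is controlled by the tube lemma over the compact set $\gamma_0([-n,n])$ --- correctly identified as the only compactness available, since $G$ is not assumed locally compact. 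You also use the neighborhood basis $W_{n,U}(\cdot)$ exactly as defined in the paper, with the same interval $[-n,n]$ throughout, so the two bounds combine as claimed. This is the standard proof of joint continuity of the adjoint action on $\Lg(G)$ with the compact-open topology, and it is the argument one would expect to find behind the cited reference.
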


\begin{proof}
See \cite[Lemma 0.1.4.1]{BCR81}. 
\end{proof}

\subsection*{Differentiability along one-parameter subgroups}

\begin{definition}\label{aux4}
\normalfont
Let $G$ be any topological group with an arbitrary open subset $V\subseteq G$ 
and $\Yc$ be any real locally convex space.  
If $\varphi\colon V\to\Yc$, $\gamma\in\Lg(G)$, and $g\in V$, then we denote 
\begin{equation}\label{aux4_eq1}
(D^\lambda_\gamma\varphi)(g)=\lim_{t\to 0}\frac{\varphi(g\gamma(t))-\varphi(g)}{t} 
\end{equation}
if the limit in the right-hand side exists. 

We define $\Cc^1(V,\Yc)$ as the set of all $\varphi\in\Cc(V,\Yc)$ 
for which the function 
$$
D^\lambda\varphi\colon V\times\Lg(G)\to\Yc,\quad 
(D^\lambda\varphi)(g;\gamma):=(D^\lambda_\gamma\varphi)(g) $$
is well defined and continuous. 
We also denote $D^\lambda\varphi=(D^\lambda)^{1}\varphi$. 

Now let $n\ge 2$ and assume the space $\Cc^{n-1}(V,\Yc)$ and the mapping 
$(D^\lambda)^{n-1}$ have been defined. 
Then we define $\Cc^n(V,\Yc)$ as the set of all functions $\varphi\in\Cc^{n-1}(V,\Yc)$ 
for which the function 
$$\begin{aligned}
(D^\lambda)^{n}\varphi\colon 
V\times \Lg(G)\times\cdots\times\Lg(G) & \to\Yc,\\
(g;\gamma_1,\dots,\gamma_n) & \mapsto (D^\lambda_{\gamma_n}(D^\lambda_{\gamma_{n-1}}\cdots(D^\lambda_{\gamma_1}\varphi)\cdots))(g)
\end{aligned} $$
is well defined and continuous. 

Moreover we define $\Ci(V,\Yc):=\bigcap\limits_{n\ge1}\Cc^n(V,\Yc)$. 
If $\Yc=\CC$, then we write simply $\Cc^n(G):=\Cc^n(V,\CC)$  
etc.,  
for $n=1,2,\dots,\infty$. 
\end{definition}

\begin{notation}
\normalfont
It will be convenient to use the notation 
$$D^\lambda_\gamma\varphi:= D^\lambda_{\gamma_n}(D^\lambda_{\gamma_{n-1}}\cdots(D^\lambda_{\gamma_1}\varphi)\cdots)\colon G\to\Yc$$
whenever $\gamma:=(\gamma_1,\dots,\gamma_n)\in \Lg(G)\times\cdots\times\Lg(G)$ and $\varphi\in\Cc^n(G,\Yc)$. 
\end{notation}

\subsection*{Some auxiliary facts}

For later use we record the following well-known facts. 

\begin{lemma}\label{L 4.2}
Let $X$ and $T$ be any topological spaces, $\Yc$ be any locally convex space, and 
$f:X\times T\to \Yc$ be any continuous function. 
Pick any point $x_0\in X$ and compact set $K\subseteq T$. 
Then for any continuous seminorm $\vert\cdot\vert$ on $\Yc$ we have
\begin{equation}\label{L 4.2_eq1}
\lim\limits_{x\to x_0} \sup_{t\in K} \lvert f(x,t)-f(x_0,t) \rvert=0.
\end{equation}
\end{lemma}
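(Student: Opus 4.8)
The plan is to prove the statement by a standard compactness-plus-uniform-continuity argument, reducing the uniform limit over the compact set $K$ to the continuity of $f$ at the finitely many points of a suitable finite subcover.

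First I would fix an arbitrary continuous seminorm $\lvert\cdot\rvert$ on $\Yc$ and an arbitrary $\varepsilon>0$; the goal is to produce a neighborhood $N$ of $x_0$ in $X$ such that $\lvert f(x,t)-f(x_0,t)\rvert<\varepsilon$ for all $x\in N$ and all $t\in K$. For each point $t\in K$, continuity of $f$ at $(x_0,t)$ gives a neighborhood $N_t$ of $x_0$ and a neighborhood $V_t$ of $t$ in $T$ such that $\lvert f(x,s)-f(x_0,t)\rvert<\varepsilon/2$ whenever $x\in N_t$ and $s\in V_t$. Applying the triangle inequality with the midpoint $f(x_0,t)$, for $x\in N_t$ and $s\in V_t$ we get $\lvert f(x,s)-f(x_0,s)\rvert\le\lvert f(x,s)-f(x_0,t)\rvert+\lvert f(x_0,t)-f(x_0,s)\rvert<\varepsilon$.

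The family $\{V_t\mid t\in K\}$ is an open cover of the compact set $K$, so there are finitely many points $t_1,\dots,t_m\in K$ with $K\subseteq V_{t_1}\cup\cdots\cup V_{t_m}$. Set $N:=N_{t_1}\cap\cdots\cap N_{t_m}$, a neighborhood of $x_0$. Now for any $x\in N$ and any $t\in K$, choose $j$ with $t\in V_{t_j}$; since $x\in N\subseteq N_{t_j}$, the estimate of the previous paragraph gives $\lvert f(x,t)-f(x_0,t)\rvert<\varepsilon$. Taking the supremum over $t\in K$ yields $\sup_{t\in K}\lvert f(x,t)-f(x_0,t)\rvert\le\varepsilon$ for all $x\in N$, which is exactly \eqref{L 4.2_eq1}.

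This argument is entirely routine; there is no serious obstacle, only the bookkeeping of which neighborhoods depend on which points. The one place to be slightly careful is that $X$ is merely a topological space, not assumed first countable or metrizable, so the limit in \eqref{L 4.2_eq1} must be read in the filter sense (for every $\varepsilon$ there is a neighborhood $N$ of $x_0$ on which the bound holds), which is precisely what the above produces; no sequences are needed.
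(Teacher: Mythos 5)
Your argument is correct and complete: for each $t\in K$ you use continuity of $f$ at $(x_0,t)$ together with the continuity of the seminorm to get a product neighborhood $N_t\times V_t$ on which $f$ stays within $\varepsilon/2$ of $f(x_0,t)$, the triangle inequality through the midpoint $f(x_0,t)$ (valid because $x_0\in N_t$) converts this into the two-point estimate $\lvert f(x,s)-f(x_0,s)\rvert<\varepsilon$, and compactness of $K$ plus a finite intersection of the $N_{t_j}$ finishes the job. Your closing remark about reading the limit in the filter sense is exactly the right precaution, since $X$ is an arbitrary topological space. The only difference from the paper is that the paper does not prove this lemma at all: it simply records it as well known, pointing to its connection with the exponential law $\Cc(X\times T,\Yc)\simeq\Cc(X,\Cc(T,\Yc))$ for continuous functions and citing Arens--Dugundji [AD51, Th.~4.21]. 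Your tube-lemma-style compactness argument is the standard self-contained proof underlying that citation; it buys the reader a complete, elementary verification at the cost of a few lines, whereas the paper's citation places the lemma in the broader function-space context that motivates the smooth exponential law proved later in Section~4.
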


\begin{proof}
This result is well known and is related to the exponential law for continuous functions 
$\Cc(X\times T,\Yc)\simeq\Cc(X,\Cc(T,\Yc))$; 
see for instance \cite[Th. 4.21]{AD51}. 
\end{proof}

In the following lemma we record the continuity with respect to parameters 
for the weak integrals in locally convex spaces which may not be complete; 
see \cite{Gl02a} for a thorough discussion of that integral, 
related differential calculus, and their applications to Lie theory. 

\begin{lemma}\label{T 4.25}
Let $X$ be any topological space, $\Yc$ be any locally convex space, $a,b\in \RR$, $a<b$, and 
$f\colon X\times [a,b]\to\Yc$ be any continuous function with the property that for every $x\in X$ 
there exists the weak integral  
$h(x)=\int\limits_a^b f(x,t)\de t$. 
Then the function $h\colon X\to\Yc$ obtained in this way is continuous.
\end{lemma}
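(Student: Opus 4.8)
The plan is to reduce the statement to a standard seminorm estimate for weak integrals combined with Lemma~\ref{L 4.2}. Fix a point $x_0\in X$ and let $\lvert\cdot\rvert$ be an arbitrary continuous seminorm on $\Yc$; since continuity at every point and for every seminorm implies continuity of $h$, it suffices to show that $\lvert h(x)-h(x_0)\rvert\to 0$ as $x\to x_0$. First I would observe that for every $x\in X$ the weak integral $\int\limits_a^b\bigl(f(x,t)-f(x_0,t)\bigr)\de t$ exists and equals $h(x)-h(x_0)$: indeed, for any $\lambda$ in the topological dual $\Yc'$ the scalar function $t\mapsto\lambda\bigl(f(x,t)-f(x_0,t)\bigr)$ is continuous on $[a,b]$, hence Riemann integrable, and by linearity of the scalar integral its value is $\lambda(h(x))-\lambda(h(x_0))$.

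The next and central step is the estimate: if $g\colon[a,b]\to\Yc$ is continuous and its weak integral $z=\int\limits_a^b g(t)\de t$ exists in $\Yc$, then for every continuous seminorm $\lvert\cdot\rvert$ on $\Yc$ one has
$$\lvert z\rvert\le(b-a)\sup\limits_{t\in[a,b]}\lvert g(t)\rvert,$$
the supremum on the right being finite because $[a,b]$ is compact and both $g$ and $\lvert\cdot\rvert$ are continuous. To prove this I would invoke the Hahn--Banach theorem to select $\lambda\in\Yc'$ dominated by the seminorm, that is, $\lvert\lambda(y)\rvert\le\lvert y\rvert$ for all $y\in\Yc$, and with $\lambda(z)=\lvert z\rvert$; then, using the defining property of the weak integral,
$$\lvert z\rvert=\lambda(z)=\lambda\Bigl(\int\limits_a^b g(t)\de t\Bigr)=\int\limits_a^b\lambda(g(t))\de t\le\int\limits_a^b\lvert g(t)\rvert\de t\le(b-a)\sup\limits_{t\in[a,b]}\lvert g(t)\rvert.$$

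Combining the two steps yields $\lvert h(x)-h(x_0)\rvert\le(b-a)\sup\limits_{t\in[a,b]}\lvert f(x,t)-f(x_0,t)\rvert$, and the right-hand side tends to $0$ as $x\to x_0$ by Lemma~\ref{L 4.2} applied with the compact set $K=[a,b]$. Since the seminorm $\lvert\cdot\rvert$ and the point $x_0$ were arbitrary, $h$ is continuous. I expect the only genuine subtlety to be the seminorm estimate of the second step: since $\Yc$ is not assumed complete and the integral is only weak, one cannot bound $\lvert z\rvert$ by approximating $z$ with Riemann sums, so passing to scalar-valued integrals via Hahn--Banach is the essential move; the remaining manipulations — linearity of the weak integral and the reduction to a single seminorm — are routine.
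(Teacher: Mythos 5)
Your proof is correct and follows essentially the same route as the paper: the key step in both is the seminorm estimate $\vert h(x)-h(x_0)\vert\le(b-a)\sup_{t\in[a,b]}\vert f(x,t)-f(x_0,t)\vert$, followed by an application of Lemma~\ref{L 4.2}. The only difference is that the paper cites this estimate from the literature (Gl\"ockner's Lemma~1.7 in \cite{Gl02a}), whereas you supply its standard Hahn--Banach proof, which is a valid and self-contained substitute.
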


\begin{proof}
To prove that the function~$h$ is continuous, let $\vert\cdot\vert$ be any continuous seminorm on~$\Yc$. 
It follows by \cite[Lemma 1.7]{Gl02a} that we have 
$$(\forall x,y\in X)\quad \vert h(x)-h(y)\vert\le (b-a)\sup\limits_{t\in[a,b]}\vert f(x,t)-f(y,t)\vert $$
and now by using Lemma~\ref{L 4.2} we readily see that the function $h\colon X\to\Yc$ is continuous. 
\end{proof}

\begin{lemma}\label{L 4.26}
Let $H$ be any topological group and $h\in \Cc(H,\Yc)$.
If $X\in \Lg(H)$ and the derivative 
$D_X^\lambda h\colon H\to\Yc$ exists and is continuous,  
then there exists a continuous function 
$\chi\colon \RR\times H\to\Yc$ 
satisfying for arbitrary $g\in H$ the conditions  
$$(\forall t\in\RR)\quad h(gX(t))=h(g)+t(D_X^\lambda h)(g)+t\chi(t,g)$$
 and $\chi(0,g)=0$.
\end{lemma}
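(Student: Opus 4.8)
The plan is to define $\chi$ by the usual Taylor-remainder integral formula and then verify its continuity using Lemma~\ref{T 4.25}. For fixed $g\in H$ and $t\in\RR$ with $t\neq 0$, set
$$
\chi(t,g):=\frac{1}{t}\Bigl(h(gX(t))-h(g)-t(D_X^\lambda h)(g)\Bigr),
$$
and $\chi(0,g):=0$; the identity $h(gX(t))=h(g)+t(D_X^\lambda h)(g)+t\chi(t,g)$ then holds by construction for all $t$, and $\chi(0,g)=0$ as required. The content of the lemma is the \emph{joint} continuity of $\chi$ on $\RR\times H$, which is clear away from $t=0$ (there the formula is a composition of continuous maps, using that the group operation and $X$ are continuous and that $h$ and $D_X^\lambda h$ are continuous), so the only real issue is continuity at points $(0,g_0)$.

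For this I would rewrite the remainder as an integral. The key observation is that for fixed $g$ and $t$, the function $s\mapsto (D_X^\lambda h)(gX(st))$ is continuous on $[0,1]$, and a short computation with the defining limit~\eqref{aux4_eq1} shows that $\frac{\de}{\de s}\,h(gX(st))=t\,(D_X^\lambda h)(gX(st))$ as a weak derivative. Hence by the fundamental theorem of calculus for weak integrals,
$$
h(gX(t))-h(g)=t\int_0^1 (D_X^\lambda h)(gX(st))\,\de s,
$$
and therefore, for all $t$ (including $t=0$, trivially),
$$
\chi(t,g)=\int_0^1\Bigl((D_X^\lambda h)(gX(st))-(D_X^\lambda h)(g)\Bigr)\,\de s .
$$
Now define $f\colon (\RR\times H)\times[0,1]\to\Yc$ by $f\bigl((t,g),s\bigr)=(D_X^\lambda h)(gX(st))-(D_X^\lambda h)(g)$. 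This is continuous, since $(t,g,s)\mapsto gX(st)$ is continuous $\RR\times H\times[0,1]\to H$ and $D_X^\lambda h$ is continuous on $H$; and for each $(t,g)$ the weak integral $\int_0^1 f((t,g),s)\,\de s=\chi(t,g)$ exists (it equals the difference of two existing weak integrals, one of which is over a constant). Applying Lemma~\ref{T 4.25} with $X$ there taken to be $\RR\times H$, we conclude that $\chi\colon\RR\times H\to\Yc$ is continuous.

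I expect the main technical obstacle to be the justification of the weak fundamental theorem of calculus identity $h(gX(t))-h(g)=t\int_0^1(D_X^\lambda h)(gX(st))\,\de s$ in a possibly incomplete locally convex space: one must check that $s\mapsto h(gX(st))$ is indeed a $C^1$ curve with the stated derivative and that the weak integral of its derivative recovers it. This is essentially the content of the weak differential calculus of \cite{Gl02a} (testing against all continuous linear functionals reduces it to the scalar case), and the existence of the weak integral of the continuous $[0,1]$-curve $s\mapsto (D_X^\lambda h)(gX(st))$ follows because it is a compact-parameter family; the rest of the argument is then a routine application of Lemma~\ref{T 4.25}.
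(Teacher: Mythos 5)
Your argument is correct. Note that the paper itself gives no proof of Lemma~\ref{L 4.26}: it simply cites \cite[Lemma 2.5]{NS12} and \cite[Prop. 2.3]{BB11}. What you wrote is precisely the standard argument behind those references, and it is also the very technique the paper deploys later in Case~2 of the proof of Proposition~\ref{P 4.27}, where the identity $B(t)=B(0)+t\int_0^1 B'(tz)\,\de z$ from \cite[Th. 1.5]{Gl02a} is combined with Lemma~\ref{T 4.25} to get joint continuity of the remainder. So your route coincides with the intended one. The chain-rule step $\frac{\de}{\de s}h(gX(st))=t\,(D_X^\lambda h)(gX(st))$ is justified exactly as you indicate, using $X(st+\varepsilon t)=X(st)X(\varepsilon t)$ and the defining limit~\eqref{aux4_eq1}.

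One small imprecision worth fixing: your closing remark that the weak integral of $s\mapsto (D_X^\lambda h)(gX(st))$ exists ``because it is a compact-parameter family'' is not a valid reason in a possibly incomplete locally convex space, where continuous curves on $[0,1]$ need not admit weak integrals. The correct justification, which you do also give, is that the fundamental theorem of calculus of \cite[Th. 1.5]{Gl02a} asserts both the existence of the weak integral of the derivative of a $\Cc^1$ curve and the identity recovering the curve from it; this is what guarantees the hypothesis of Lemma~\ref{T 4.25} that $\int_0^1 f((t,g),s)\,\de s$ exists for every $(t,g)$. With that phrasing tightened, the proof is complete.
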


\begin{proof}
This follows by \cite[Lemma 2.5]{NS12}; see also \cite[Prop. 2.3]{BB11}. 
\end{proof} 

\section{Distributions with compact support, convolutions, and local operators}\label{Sect3}

In this section we give a precise statement 
of the problem that motivated the present paper; see Problem~\ref{probl1} below.  

\subsection*{Topologies on spaces of smooth functions} 
Spaces of smooth functions and their topologies play an important role 
in the theory of infinite-dimensional Lie groups modeled on locally convex spaces; 
see for instance \cite[Def. I.5.1]{Ne06}. 
We will now introduce a suitable topology on spaces of smooth functions on any topological group $G$, 
by using compact subsets of the space of one-parameter subgroups $\Lg(G)$ and its Cartesian powers. 
This topology turns out to be adequate for establishing the exponential law (Theorem~\ref{P 4.15} and Remark~\ref{law}). 

\begin{definition}\label{distrib}
\normalfont
Let $G$ be any topological group and denote 
$$(\forall k\ge 1)\quad \Lg^k(G):=\underbrace{\Lg(G)\times\cdots\times\Lg(G)}_{k\text{ times}}.$$
Pick any open set $V\subseteq G$. 
If $\Yc$ is any locally convex space, 
then for every $k\ge 1$, any compact subsets $K_1\subseteq \Lg^k(G)$ and $K_2\subseteq V$, 
and any continuous seminorm $\vert\cdot\vert$ on~$\Yc$ we 
define 
$$p^{\vert\cdot\vert}_{K_1,K_2}\colon\Ci(V,\Yc)\to[0,\infty),\quad 
p^{\vert\cdot\vert}_{K_1,K_2}(f)=\sup\{\vert (D^\lambda_\gamma f)(x)\vert\mid \gamma\in K_1,x\in K_2\}.$$
\emph{For the sake of simplicity we will always omit the seminorm $\vert\cdot\vert$ on~$\Yc$ 
from the above notation, by writing simply $p_{K_1,K_2}$ instead of $p^{\vert\cdot\vert}_{K_1,K_2}$.}

We endow the function space $\Ci(V,\Yc)$ with the locally convex topology defined by the family of 
these seminorms $p_{K_1,K_2}$ 
and the locally convex space obtained in this way will be denoted by~$\Ec(V,\Yc)$. 
If $\Yc=\CC$ then we write simply $\Ec(V):=\Ec(V,\Yc)$. 

We also denote by $\Ec'(G)$ the topological dual of $\Ec(G)$ endowed with the  
weak dual topology. 
This means that we have 
$$\Ec'(G)=\{u\colon\Ec(G)\to\CC\mid u\text{ is linear and continuous}\}$$
as a linear space, and this space of linear functionals is endowed with 
the locally convex topology defined by the family of seminorms 
$\{q_B\mid B\text{ 
finite }\subseteq\Ec(G)\}$, 
where for every 
finite subset $B\subseteq\Ec(G)$ we define the seminorm 
$$q_B\colon\Ec'(G)\to\CC,\quad q_B(u):=
\max_{f\in B}\vert u(f)\vert.$$ 
The elements of $\Ec'(G)$ will be called \emph{distributions with compact support} on $G$. 
\end{definition}

Before  to go further, we state an interesting problem 
related to the above definition. 

\begin{problem}\label{probl2}
Find conditions on the topological group $G$ ensuring that every closed bounded subset of 
the locally convex space $\Ec(G)$ is compact. 
\end{problem}

The above problem will not be addressed in the present paper.  
Let us just mention that it has an affirmative answer 
if $G$ is any finite-dimensional Lie group; see \cite{Eh56}. 

\begin{definition}
\normalfont
Assume the setting of Definition~\ref{distrib}. 
The \emph{support} of any $u\in\Ec'(G)$ is denoted by $\supp u$ and is defined as the set of all 
points $x\in G$ with the property that for every neighborhood $U$ of $x$ 
there exists $f\in\Ec(G)$ such that $\supp f\subseteq U$ and $u(f)\ne0$. 
\end{definition}

\begin{remark}
 \normalfont 
For every $u\in\Ec'(G)$, by using its continuity with respect to 
the topology of $\Ec(G)$ introduced in Definition~\ref{distrib}, 
it follows that there exist a positive constant $C>0$, an integer $k\ge 1$, 
and some compact subsets $K_1\subseteq \Lg^k(G)$ and $K_2\subseteq G$ 
for which 
$$(\forall f\in\Ec(G))\quad \vert u(f)\vert\le C p_{K_1,K_2}(f). $$
This implies $\supp u\subseteq K_2$, 
hence the set $\supp u$ is compact in~$G$, 
and this motivates the terminology introduced in Definition~\ref{distrib}. 
For every compact subset $K\subseteq G$ we denote 
$$\Ec'_K(G):=\{u\in\Ec'(G)\mid \supp u\subseteq K\}.$$
In the case $K=\{\1\}$ we will denote simply $\Ec'_{\1}(G):=\Ec'_{\{\1\}}(G)$. 
\end{remark}

\subsection*{Convolutions} 
We next wish to introduce the convolution of a smooth function with a distribution with compact support. 

\begin{definition}\label{conv_def}
\normalfont 
Let $G$ be any topological group.
For all $\varphi\in\Ec(G)$ define $\check{\varphi}\in\Ec(G)$ by 
$$(\forall x\in G)\quad \check{\varphi}(x):=\varphi(x^{-1}).$$
Then for every $u\in\Ec'(G)$ we define $\check{u}\in\Ec'(G)$ by 
$$(\forall \varphi\in\Ec(G))\quad \check{u}(\varphi):=u(\check{\varphi}).$$
Finally, for all  $\varphi\in\Ec(G)$ and $u\in\Ec'(G)$ we define their \emph{convolution} 
as the function  
$$\varphi\ast u\colon G\to\CC,\quad (\varphi\ast u)(x):=\check u(\varphi\circ L_x) $$
where for all $x\in G$ we define $L_x\colon G\to G$, $L_x(y):=xy$. 
\end{definition}

The above definition is clearly correct, 
in the sense that $\check{\varphi},\varphi\circ L_x\in\Ec(G)$ for all $x\in G$ and $\varphi\in\Ec(G)$, 
if $G$ is a Lie group (see also \cite{Eh56}).  
We will show in Propositions \ref{check_lemma} and \ref{transl_lemma} below 
that the definition is actually correct
for arbitrary topological groups. 
To this end we begin by the following simple computation. 

\begin{remark}\label{check_deriv}
\normalfont
If $\varphi\in\Cc^1(G,\Yc)$, $x\in G$, and $\gamma\in\Lg(G)$, then 
\allowdisplaybreaks
\begin{align}
(D^\lambda_\gamma\check{\varphi})(x)
&=\lim_{t\to 0}\frac{\varphi(\gamma(-t)\cdot x^{-1})-\varphi(x^{-1})}{t} \nonumber\\
&=-\lim_{t\to 0}\frac{\varphi(\gamma(t)\cdot x^{-1})-\varphi(x^{-1})}{t} \nonumber\\
&=-\lim_{t\to 0}\frac{\varphi(x^{-1}\cdot (\Ad_G(x)\gamma)(t))-\varphi(x^{-1})}{t} \nonumber\\
&=-(D^\lambda_{\Ad_G(x)\gamma}\varphi)(x^{-1}). \nonumber
\end{align}
Similarly, if $n\ge 1$, $\varphi\in\Ci(G,\Yc)$, and $\gamma_1,\dots,\gamma_n\in\Lg(G)$, 
then for all $x\in G$ we have 
$$(D^\lambda_{\gamma_1}\cdots D^\lambda_{\gamma_n})(x)
=(-1)^n(D^\lambda_{\Ad_G(x)\gamma_n}\cdots D^\lambda_{\Ad_G(x)\gamma_n}\varphi)(x^{-1})$$
(see \cite[pag. 45]{BCR81}). 
\end{remark}

\begin{proposition}\label{check_lemma}
If $G$ is any topological group, then for all $\varphi\in\Ec(G)$ we have $\check{\varphi}\in\Ec(G)$. 
Moreover, the mapping 
$\Ec(G)\to\Ec(G)$, $\varphi\mapsto\check{\varphi}$, 
is an isomorphism of locally convex spaces. 
\end{proposition}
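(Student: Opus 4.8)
The plan is to show that $\varphi\mapsto\check\varphi$ maps $\Ci(G)$ into itself, that it is continuous for the topology of $\Ec(G)$, and that it is its own inverse; since $(\check{\;})^2=\id$ on $\Cc(G)$ trivially, once we know it maps $\Ec(G)$ continuously to $\Ec(G)$ it is automatically an isomorphism of locally convex spaces.

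First I would verify the smoothness. Using Remark~\ref{check_deriv}, if $\varphi\in\Ci(G,\Yc)$ and $\gamma_1,\dots,\gamma_n\in\Lg(G)$, then for all $x\in G$
$$
(D^\lambda_{(\gamma_1,\dots,\gamma_n)}\check\varphi)(x)
=(-1)^n\,(D^\lambda_{(\Ad_G(x)\gamma_n,\dots,\Ad_G(x)\gamma_1)}\varphi)(x^{-1}).
$$
The right-hand side is a composition of the continuous map $(x;\gamma_1,\dots,\gamma_n)\mapsto\big(x^{-1};\Ad_G(x)\gamma_n,\dots,\Ad_G(x)\gamma_1\big)$ on $G\times\Lg^n(G)$ — continuity of inversion plus Lemma~\ref{0141} for the adjoint action — with the continuous map $(D^\lambda)^n\varphi$ on $G\times\Lg^n(G)$. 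Hence $(D^\lambda)^n\check\varphi$ is well defined and continuous for every $n$, so $\check\varphi\in\Ci(G)$, and by induction one sees this is exactly what is needed to move from $\Cc^{n-1}$ to $\Cc^n$ in Definition~\ref{aux4}.

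Next I would prove continuity of $\varphi\mapsto\check\varphi$ by estimating the seminorms $p_{K_1,K_2}$. Given a compact $K_1\subseteq\Lg^n(G)$ and a compact $K_2\subseteq G$, set $K_2^{-1}:=\{x^{-1}\mid x\in K_2\}$, which is compact by continuity of inversion, and set
$$
\widetilde K_1:=\{(\Ad_G(x)\gamma_n,\dots,\Ad_G(x)\gamma_1)\mid x\in K_2,\ (\gamma_1,\dots,\gamma_n)\in K_1\}\subseteq\Lg^n(G),
$$
which is compact as the image of the compact set $K_2\times K_1$ under the continuous map built from $\Ad_G$ and the coordinate permutation (again Lemma~\ref{0141}). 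The displayed formula for $(D^\lambda)^n\check\varphi$ then gives, for every $f\in\Ec(G)$,
$$
p_{K_1,K_2}(\check f)\le p_{\widetilde K_1,K_2^{-1}}(f),
$$
which shows the linear map $f\mapsto\check f$ is continuous from $\Ec(G)$ to $\Ec(G)$. (For $n=0$, i.e. the "no-derivative" seminorms $\sup_{x\in K_2}|f(x)|$, the same inequality holds trivially with $\widetilde K_1$ empty.)

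Finally, since $\check{\check\varphi}=\varphi$ for every $\varphi\in\Cc(G)$ — this is immediate from $(x^{-1})^{-1}=x$ — the map $f\mapsto\check f$ is a bijection of $\Ec(G)$ onto itself whose inverse is itself, hence continuous; therefore it is an isomorphism of locally convex spaces. I do not anticipate a serious obstacle here: the only point requiring care is bookkeeping the order-reversal of the tuple $(\gamma_1,\dots,\gamma_n)$ and the simultaneous application of $\Ad_G(x)$ in the formula of Remark~\ref{check_deriv}, and checking that the set $\widetilde K_1$ is genuinely compact, which rests on the continuity of $\Ad_G$ from Lemma~\ref{0141}.
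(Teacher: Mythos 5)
Your proposal is correct and follows essentially the same route as the paper: both reduce the isomorphism claim to continuity via the self-inverse property, use the formula of Remark~\ref{check_deriv} composed with the homeomorphism built from inversion and $\Ad_G$ (Lemma~\ref{0141}) to get $\check\varphi\in\Ci(G)$, and then bound $p_{K_1,K_2}(\check\varphi)$ by a seminorm indexed by the transported compact sets. Your bookkeeping is in fact slightly more careful than the paper's, which contains a typo ($x\in K_1$ instead of $x\in K_2$) in the definition of the transported compact set $K_1'$.
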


\begin{proof} 
The linear map $\varphi\mapsto\check{\varphi}$ is equal to its own inverse, hence it suffices to prove that it is continuous. 
To this end define for arbitrary $n\ge 1$, 
$$\begin{aligned}
\Psi_n\colon &\Lg(G)\times\cdots\times\Lg(G)\times G\to\Lg(G)\times\cdots\times\Lg(G)\times G,\\
&\Psi(\gamma_1,\dots,\gamma_n,x)=(\Ad_G(x)\gamma_1,\dots,\Ad_G(x)\gamma_n,x^{-1}). 
\end{aligned}$$
It follows directly by Lemma~\ref{0141} that the mapping $\Psi$ is a homeomorphism. 
Moreover, by Remark~\ref{check_deriv}, it follows that for every $\varphi\in\Cc^n(G)$ we have 
\begin{equation}\label{check_lemma_proof_eq1}
(D^\lambda)^n\check{\varphi}=(-1)^n((D^\lambda)^n\varphi)\circ\Psi_n
\end{equation} 
hence $\check{\varphi}\in\Cc^n(G)$. 
Since $n\ge 1$ is arbitrary, this shows that if $\varphi\in\Ec(G)$, then $\check{\varphi}\in\Ec(G)$. 

To check that the linear mapping $\Ec(G)\to\Ec(G)$, $\varphi\mapsto\check{\varphi}$, is also continuous, 
let $k\ge 1$ be any integer and the compact sets $K_1\subseteq \Lg^k(G)$ and $K_2\subseteq G$ be arbitrary. 
Define 
$$K'_1:=\{(\Ad_G(x)\gamma_1,\dots,\Ad_G(x)\gamma_k)\mid x\in K_1,(\gamma_1,\dots,\gamma_k)\in K_1\}$$ 
and $K'_2:=\{x^{-1}\mid x\in K_2\}$. 
Since both the inversion mapping and the adjoint action of~$G$ are continuous (Lemma~\ref{0141}), 
it is easily seen that the sets $K'_1$ and $K'_2$ are compact. 
Moreover, it follows by \eqref{check_lemma_proof_eq1} along with Definition~\ref{distrib} that we have 
$$(\forall\varphi\in\Ec(G))\quad p_{K_1,K_2}(\check{\varphi})\le p_{K'_1,K'_2}(\varphi)$$
hence the linear mapping $\Ec(G)\to\Ec(G)$, $\varphi\mapsto\check{\varphi}$, is indeed continuous. 
\end{proof}

\begin{remark}\label{transl_deriv}
\normalfont
If $\varphi\in\Cc^1(G,\Yc)$, $x,g\in G$, and $\gamma\in\Lg(G)$, then 
$$(D^\lambda_\gamma(\varphi\circ L_x))(g)
=\lim_{t\to 0}\frac{\varphi(xg\gamma(t))-\varphi(xg)}{t}
=(D^\lambda_\gamma\varphi)(xg).$$
Therefore $D^\lambda_\gamma(\varphi\circ L_x)=(D^\lambda_\gamma\varphi)\circ L_x$.
\end{remark}

\begin{proposition}\label{transl_lemma}
If $G$ is any topological group and  $\Yc$ is any locally convex space, 
then for all $\varphi\in\Ci(G,\Yc)$ and $x\in G$ we have $\varphi\circ L_x\in\Ci(G,\Yc)$. 
\end{proposition}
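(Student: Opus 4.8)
The plan is to reduce the statement to the first-order case and then induct. Concretely, I would first prove that $\varphi\circ L_x\in\Cc^1(G,\Yc)$ whenever $\varphi\in\Cc^1(G,\Yc)$: by Remark~\ref{transl_deriv} we have the pointwise identity $D^\lambda_\gamma(\varphi\circ L_x)=(D^\lambda_\gamma\varphi)\circ L_x$, so the candidate for the derivative function is $(g;\gamma)\mapsto (D^\lambda\varphi)(xg;\gamma)$. This is the composition of the continuous map $(D^\lambda\varphi)\colon G\times\Lg(G)\to\Yc$ with the continuous map $(g,\gamma)\mapsto(xg,\gamma)$ (left translation by $x$ is a homeomorphism of $G$, since $G$ is a topological group), hence it is continuous; this is exactly what is needed for $\varphi\circ L_x\in\Cc^1(G,\Yc)$.

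For the inductive step, I would introduce the translation operator $T_x\colon\varphi\mapsto\varphi\circ L_x$ and record the key commutation relation $D^\lambda_\gamma\circ T_x=T_x\circ D^\lambda_\gamma$ from Remark~\ref{transl_deriv}. Iterating this, for $\varphi\in\Cc^n(G,\Yc)$ and $\gamma=(\gamma_1,\dots,\gamma_n)\in\Lg^n(G)$ one gets $D^\lambda_\gamma(\varphi\circ L_x)=(D^\lambda_\gamma\varphi)\circ L_x$ as functions on $G$, and therefore the $n$-th derivative map associated with $\varphi\circ L_x$ equals $(g;\gamma_1,\dots,\gamma_n)\mapsto\bigl((D^\lambda)^n\varphi\bigr)(xg;\gamma_1,\dots,\gamma_n)$, which is the composition of the continuous map $(D^\lambda)^n\varphi$ with the homeomorphism $(g,\gamma_1,\dots,\gamma_n)\mapsto(xg,\gamma_1,\dots,\gamma_n)$ of $G\times\Lg^n(G)$. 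By the inductive hypothesis $\varphi\circ L_x\in\Cc^{n-1}(G,\Yc)$, and by the continuity just established $\varphi\circ L_x\in\Cc^{n}(G,\Yc)$. Since $n$ is arbitrary, $\varphi\circ L_x\in\bigcap_{n\ge1}\Cc^n(G,\Yc)=\Ci(G,\Yc)$.

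This argument is essentially bookkeeping: there is no genuine analytic obstacle, because the only limits involved are those already handled in the definition of $\Cc^n$, and Remark~\ref{transl_deriv} supplies the algebraic identity that lets each derivative be transported through $L_x$. The one point requiring a little care is making the induction precise, since the definition of $\Cc^n(V,\Yc)$ in Definition~\ref{aux4} is itself recursive: one must verify simultaneously, for each $n$, both that the iterated derivative of $\varphi\circ L_x$ is well defined (which follows from the inductive hypothesis that $\varphi\circ L_x\in\Cc^{n-1}$ and the identity $(D^\lambda)^{n-1}(\varphi\circ L_x)=\bigl((D^\lambda)^{n-1}\varphi\bigr)\circ(L_x\times\id)$) and that it is continuous. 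I would phrase the induction on the statement ``for all $n\ge1$ and all $\varphi\in\Cc^n(G,\Yc)$, one has $\varphi\circ L_x\in\Cc^n(G,\Yc)$ and $(D^\lambda)^n(\varphi\circ L_x)=\bigl((D^\lambda)^n\varphi\bigr)\circ\rho_n$'', where $\rho_n\colon G\times\Lg^n(G)\to G\times\Lg^n(G)$ is the homeomorphism $(g,\gamma)\mapsto(xg,\gamma)$, which packages both claims together and makes the step transparent.
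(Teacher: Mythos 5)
Your proposal is correct and follows essentially the same route as the paper: the paper likewise iterates Remark~\ref{transl_deriv} to obtain $(D^\lambda)^n(\varphi\circ L_x)=((D^\lambda)^n\varphi)\circ F_n^x$ for the homeomorphism $F_n^x(\gamma_1,\dots,\gamma_n,g)=(\gamma_1,\dots,\gamma_n,xg)$ (your $\rho_n$, up to the ordering of factors) and concludes continuity for each $n$. Your extra care in phrasing the induction so that well-definedness and continuity are established simultaneously is a sound refinement of the same argument, not a different one.
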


\begin{proof}
For arbitrary $n\ge 1$ we have the homeomorphism
$$F_n^x\colon \Lg(G)\times\cdots\times\Lg(G)\times G\to\Lg(G)\times\cdots\times\Lg(G)\times G,\quad 
F_n^x(\gamma_1,\dots,\gamma_n,g)=(\gamma_1,\dots,\gamma_n,xg). $$
On the other hand, by iterating Remark~\ref{transl_deriv}, it follows that 
for every $\varphi\in\Ci(G,\Yc)$ we have $(D^\lambda)^n(\varphi\circ L_x)=((D^\lambda)^n\varphi)\circ F_n^x$ 
hence $(D^\lambda)^n(\varphi\circ L_x)$ is a continuous function. 
Since $n\ge 1$ is arbitrary, we obtain $\varphi\circ L_x\in\Ci(G,\Yc)$, and this completes the proof. 
\end{proof}

As already mentioned, the above Propositions \ref{check_lemma} and \ref{transl_lemma} 
imply in particular that Definition~\ref{conv_def} is correct. 
For later use we now record the version of these results for the multiplication map; 
see also Remark~\ref{obs 3.11} below. 

\begin{proposition}\label{multi}
If $G$ is any topological group with the multiplication  
$m\colon G\times G\to G$, $(x,y)\mapsto xy$, 
then for any locally convex space $\Yc$ the linear mapping 
$$\Ec(G,\Yc)\to\Ec(G\times G,\Yc),\quad \varphi\mapsto\varphi\circ m$$
is well-defined and continuous. 
\end{proposition}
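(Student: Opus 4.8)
The plan is to reduce the statement, as in the proofs of Propositions~\ref{check_lemma} and~\ref{transl_lemma}, to a computation of how the derivative operators $(D^\lambda)^n$ transform under precomposition with the multiplication map $m$. First I would analyze a single derivative. Fix $\varphi\in\Cc^1(G,\Yc)$ and $\gamma\in\Lg(G\times G)$; since $\Lg(G\times G)\simeq\Lg(G)\times\Lg(G)$ canonically (a continuous one-parameter subgroup of a product is a pair of such subgroups), write $\gamma=(\gamma_1,\gamma_2)$ with $\gamma_i\in\Lg(G)$. Then for $(x,y)\in G\times G$,
$$(D^\lambda_{(\gamma_1,\gamma_2)}(\varphi\circ m))(x,y)
=\lim_{t\to 0}\frac{\varphi(x\gamma_1(t)y\gamma_2(t))-\varphi(xy)}{t}.$$
I would split this limit by inserting $\varphi(x\gamma_1(t)y)$, rewrite $x\gamma_1(t)y=xy\cdot(\Ad_G(y^{-1})\gamma_1)(t)$, and thereby obtain
$$(D^\lambda_{(\gamma_1,\gamma_2)}(\varphi\circ m))(x,y)
=(D^\lambda_{\Ad_G(y^{-1})\gamma_1}\varphi)(xy)+(D^\lambda_{\gamma_2}\varphi)(xy),$$
the first term being handled exactly by the argument already used in Remark~\ref{check_deriv}, and the continuity of the resulting expression following from Lemma~\ref{0141}, the continuity of $D^\lambda\varphi$, and the continuity of $m$. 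Iterating, for $\varphi\in\Cc^n(G,\Yc)$ and $\gamma=(\gamma_1,\gamma_2)\in\Lg^n(G\times G)$ with $\gamma_i\in\Lg^n(G)$, a derivative $(D^\lambda)^n(\varphi\circ m)$ becomes a finite sum, indexed by subsets $S\subseteq\{1,\dots,n\}$, of terms of the form $((D^\lambda)^n\varphi)\circ\Phi_{n,S}$, where $\Phi_{n,S}$ is a continuous map $\Lg^n(G\times G)\times(G\times G)\to\Lg^n(G)\times G$ built from $m$, the inversion, and finitely many applications of $\Ad_G$ (care is needed with the order of the slots, because $D^\lambda_{\gamma_j}$ acts on the left of the previously formed derivative and the insertion above mixes a left and a right translation — I would just keep careful track of this bookkeeping). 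In particular each such term is continuous, so $(D^\lambda)^n(\varphi\circ m)$ is continuous for every $n$, whence $\varphi\circ m\in\Ci(G\times G,\Yc)=\Ec(G\times G,\Yc)$ as a set-theoretic map, and the linear map $\varphi\mapsto\varphi\circ m$ is well-defined.

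For continuity of this linear map between the locally convex spaces, I would argue exactly as at the end of the proof of Proposition~\ref{check_lemma}. Given $k\ge1$ and compact sets $K_1\subseteq\Lg^k(G)$ (identified with a compact subset of $\Lg^k(G\times G)$ via the canonical isomorphism, after splitting each factor) and $K_2\subseteq G\times G$, one uses the formula for $(D^\lambda)^k(\varphi\circ m)$ as a finite sum of terms $((D^\lambda)^k\varphi)\circ\Phi_{k,S}$ to produce finitely many compact sets $K_1^{(S)}\subseteq\Lg^k(G)$ and $K_2^{(S)}\subseteq G$ — the continuous images under (the projections of) $\Phi_{k,S}$ of $K_1\times K_2$ — such that
$$(\forall\varphi\in\Ec(G,\Yc))\qquad p_{K_1,K_2}(\varphi\circ m)\le\sum_{S\subseteq\{1,\dots,k\}} p_{K_1^{(S)},K_2^{(S)}}(\varphi),$$
the right-hand side being a continuous seminorm on $\Ec(G,\Yc)$. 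This gives continuity.

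The main obstacle I expect is purely organizational rather than conceptual: correctly identifying $\Lg(G\times G)$ with $\Lg(G)\times\Lg(G)$ (including that this identification is a homeomorphism for the topologies of uniform convergence on compacta, which one should either note explicitly or cite), and then tracking the conjugations and the slot-ordering when iterating the one-step formula $n$ times. The passage from $x\gamma_1(t)y$ to $xy\cdot(\Ad_G(y^{-1})\gamma_1)(t)$ introduces an $\Ad_G$ that depends on the base point, and when this is nested inside further derivatives one must be careful that the maps $\Phi_{n,S}$ are still continuous — but this is guaranteed by Lemma~\ref{0141} together with continuity of inversion and of $m$, so no genuine difficulty arises. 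An alternative, slicker route would be to factor $m$ as the composition of the map $G\times G\to G\times G$, $(x,y)\mapsto(xy,y)$ (or to use the already-established behaviour of left translations in Proposition~\ref{transl_lemma} fiberwise), but the direct computation above is self-contained and parallels the existing proofs closely, so that is the route I would take.
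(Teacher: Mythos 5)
Your proposal is correct and takes essentially the same route as the paper: the paper's entire proof consists of quoting from \cite[p.~46]{BCR81} the formula expressing $((D^\lambda)^k(\varphi\circ m))(x,y;(\alpha_1,\beta_1),\dots,(\alpha_k,\beta_k))$ as the sum, over subsets of $\{1,\dots,k\}$, of $k$-th derivatives of $\varphi$ at $xy$ in directions $\beta_i$ or $\Ad_G(y^{-1})\alpha_i$ (exactly your sum over $S\subseteq\{1,\dots,k\}$ of $((D^\lambda)^k\varphi)\circ\Phi_{k,S}$), followed by the same compact-set/seminorm estimate as at the end of Proposition~\ref{check_lemma}. The one caveat for your self-contained derivation is that iterating the one-step formula literally requires differentiating the base-point-dependent direction $\Ad_G(y^{-1})\alpha_1$ in its $\Lg(G)$-slot, which has no naive meaning in a bare topological group; the clean way (and the way the cited reference proceeds) is to realize the iterated derivative as a mixed partial of $(t_1,\dots,t_k)\mapsto\varphi\bigl(xy\,(\Ad_G(y^{-1})\alpha_k)(t_k)\cdots(\Ad_G(y^{-1})\alpha_1)(t_1)\beta_k(t_k)\cdots\beta_1(t_1)\bigr)$ and expand it by the commutation argument of Lemma~\ref{L 4.18}.
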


\begin{proof}
Recall from \cite[pag. 46]{BCR81} that 
for every $\varphi\in\Ci(G,\Yc)$, $x,y\in G$, $k\ge 1$, and $\alpha_1,\dots,\alpha_k,\beta_1,\dots,\beta_k\in\Lg(G)$ 
we have 
$$\begin{aligned}
((D^\lambda)^k(\varphi &\circ m))(x,y;(\alpha_1,\beta_1),\dots,(\alpha_k,\beta_k)) \\
=&\sum_{\ell=0}^k\sum_{\stackrel{\scriptstyle i_1<\cdots<i_\ell}{i_{\ell+1}<\cdots<i_k}} 
((D^\lambda)^\ell\varphi)(xy;\beta_{i_1},\dots,\beta_{i_\ell},\Ad_G(y^{-1})\alpha_{i_{\ell+1}},\dots,\Ad_G(y^{-1})\alpha_{i_k})
\end{aligned} $$
where we assume $\{i_1,\dots,i_\ell,i_{\ell+1},\dots,i_k\}=\{1,\dots,k\}$. 
With this formula at hand, the continuity of the map $\varphi\mapsto\varphi\circ m$ can be checked 
just as the continuity of $\varphi\mapsto\check{\varphi}$ in the proof of Proposition~\ref{check_lemma}. 
\end{proof}

\subsection*{Algebras of local operators} 

\begin{definition}
\normalfont
Let $G$ be any topological group. 
A \emph{local operator} on $G$ is any continuous linear operator $D\colon \Ec(G)\to\Ec(G)$ 
with the property 
$$(\forall f\in\Ec(G))\quad \supp(Df)\subseteq\supp f. $$
We will denote by $\Loc(G)$ the set of all local operators on $G$. 
It is easily seen that $\Loc(G)$ is a unital associative algebra of continuous linear operators 
on the function space~$\Ec(G)$. 
\end{definition}

\begin{remark}\label{rem_loc}
 \normalfont
It follows from \cite{Pe60} that if $G$ is any finite-dimensional Lie group, 
then $\Loc(G)$ is precisely the set of linear differential operators (possibly of infinite order) on~$G$. 
Some generalizations of that statement for locally compact groups were obtained in \cite[Th. 2.3]{Ak95a}  
and \cite[Th. 2.3]{Ed88}. 
See also \cite{WD73} and \cite{LW11} for some generalizations to the situation when $G=(\Xc,+)$ 
for Banach spaces~$\Xc$ that admit suitable bump functions (in particular for Hilbert spaces). 
\end{remark}

\begin{definition}
\normalfont
Let $G$ be any topological group and recall the notation 
$$(\forall x\in G)\quad L_x\colon G\to G,\ L_x(y)=xy.$$ 
The  \emph{left-invariant local operators} on $G$ are the elements of the set  
$$\Uc(G):=\{D\in\Loc(G)\mid (\forall x\in G)(\forall f\in\Ci(G))\quad D(f\circ L_x)=(Df)\circ L_x\}. $$
Note that $\Uc(G)$ is a unital associative subalgebra of~$\Loc(G)$. 

For every $\gamma\in\Lg(G)$ we have $D^\lambda_\gamma\in\Uc(G)$ by Remark~\ref{transl_deriv}. 
We denote by $\Uc_0(G)$ the unital associative subalgebra of $\Uc(G)$ 
generated by the family $\{D^\lambda_\gamma\mid \gamma\in\Lg(G)\}$. 
\end{definition}

We can now state one of the main problems that have motivated the present paper. 
We will address this problem in Theorem~\ref{P 4.29} and Corollary~\ref{density} below. 

\begin{problem}\label{probl1}
For any topological group $G$ we have the following inclusions of unital associative algebras: 
$$\Uc_0(G)\subseteq\Uc(G)\subseteq\Loc(G).$$
Investigate the gap between $\Uc_0(G)$ and $\Uc(G)$, 
and in particular 
find necessary or sufficient conditions on $G$ in order to ensure that $\Uc_0(G)=\Uc(G)$. 
\end{problem}

\begin{remark}\label{PBW_rem}
 \normalfont 
If $G$ is any finite-dimensional Lie group, then it follows by the Poincar\'e-Birkhoff-Witt theorem 
(see also the above Remark~\ref{rem_loc}) that $\Uc_0(G)=\Uc(G)$ and this is precisely the complexified universal enveloping algebra $U(\gg_{\CC})$ of the Lie algebra $\gg$ of~$G$. 
Hence the difficulty of Problem~\ref{probl1} 
 lies in the fact that one should extend the Poincar\'e-Birkhoff-Witt theorem from Lie groups to topological groups. 

If $G$ is a pre-Lie group (Definition~\ref{Def3.1}) with $\Lg(G)=\gg$ or a locally convex Lie group,  
then \cite[Sect. 1.3.1, ($\widetilde{4}$)]{BCR81} shows that 
the mapping $\Lg(G)\to\Uc_0(G)$, $\gamma\mapsto D^\lambda_\gamma$ 
extends to a natural surjective homomorphism of unital associative algebras $U(\gg_{\CC})\to\Uc_0(G)$,  
whose injectivity can be established   
under the additional assumption that $\Ci(G)$ contains sufficiently many functions, in some sense. 
(See the method of proof of the Poincar\'e-Birkhoff-Witt theorem from \cite{CW99} 
and also \cite[Cor. 4.1.1.7]{BCR81}.)

For instance, assume that there exists some smooth function on $\gg$ 
which is equal to~$1$ on some neighborhood of $0\in\gg$ 
and has bounded support. 
Then 
the aforementioned natural homomorphism is an isomorphism $U(\gg_{\CC})\mathop{\rightarrow}\limits^\sim \Uc_0(G)$ 
for any locally exponential Lie group~$G$ (in the sense of \cite[Sect. IV]{Ne06}) 
whose Lie algebra is $\gg$, which include in particular the Banach-Lie groups. 
Note however that this method is not always applicable 
since there exist Banach spaces that do not admit any nontrivial smooth function with bounded support, 
for instance $\ell^1(\NN)$; 
see \cite[Sect. 2, Ex. (i)]{BF66}. 

On the other hand, a result of the type $U(\gg_{\CC})\mathop{\rightarrow}\limits^\sim \Uc_0(G)=\Uc(G)$ 
was obtained in \cite[Cor. 2.5]{Ak95a} 
in the case when $G$ is any locally compact group, 
by using the Lie algebra $\gg=\Lg(G)$ discovered in \cite{Glu57} and \cite{La57}. 
\end{remark}

\section{Exponential law for smooth functions on topological groups}\label{Sect4}

The main result of this section is Theorem~\ref{P 4.15}, 
which provides a kind of exponential law for smooth functions on topological groups. 
See for instance \cite[Ch. I, \S 3]{KM97} for a broad discussion on 
the exponential law for smooth functions on open subsets of locally convex spaces. 
Further information and references on this topic can be found in \cite{KMR14},  \cite{Gl13}, and \cite{Al13}. 

\begin{notation}\label{N5.2}
\normalfont 
Let $G$ and $H$ be arbitrary topological groups. 
For any locally convex space $\Yc$ and any function 
$\varphi\in\Ci(G\times H,\Yc)$ we define  
$$\widetilde{\varphi}\colon G\to \Ci(H,\Yc),\quad \widetilde{\varphi}(x)(y)=\varphi(x,y).$$
This notation will be preserved throughout the present section. 
\end{notation}


\subsection*{Some basic formulas on partial derivatives}
We now give a definition whose correctness is established in Lemma~\ref{L 4.8} below. 

\begin{definition}\label{def 4.3}
\normalfont
Let $\varphi\in\Ci(H\times G,\Yc)$.
For $n\ge 1$ we define the partial derivatives 
$(D^\lambda _1)^n\varphi\colon H\times G\times\Lg^n(H)\to \Yc$ 
and 
$(D^\lambda _2)^n\varphi\colon H\times G\times\Lg^n(G)\to \Yc$ thus: 

For $n=1$, $\beta\in\Lg(H)$, $\alpha\in\Lg(G)$,  
$$\begin{aligned}
(D^\lambda _1 \varphi)(x,g;\beta)
&=
\frac{\de}{\de t}\Big\vert_{t=0} \varphi(x \beta(t),g) , \\
(D^\lambda _2 \varphi)(x,g;\alpha)
&=
\frac{\de}{\de t}\Big\vert_{t=0} \varphi(x,g \alpha(t)).
\end{aligned}$$
Furthermore, we define inductively 
\allowdisplaybreaks
\begin{align}
((D^\lambda _1)^{n+1}\varphi)
&(x,g;\beta_1,\dotsc,\beta_n,\beta_{n+1}) \nonumber \\
&=
\frac{\de}{\de t}\Big\vert_{t=0} ((D^\lambda _1)^n\varphi)
(x \beta_{n+1}(t),g;\beta_1,\dotsc,\beta_n) \nonumber\\
((D^\lambda _2)^{n+1}\varphi) 
&(x,g;\alpha_1,\dotsc,\alpha_n,\alpha_{n+1}) \nonumber\\
&=
\frac{\de}{\de t}\Big\vert_{t=0} ((D^\lambda _2)^n\varphi) 
(x,g \alpha_{n+1}(t);\alpha_1,\dotsc,\alpha_n). \nonumber
\end{align}
\end{definition}

\begin{notation}\label{not 4.6}
\normalfont
By $\1\in \Lg(G)$ we denote the constant function from $\RR $
to $G$ given by 
$\1(t)=\1\in G$ for all $t\in \RR $.
\end{notation}

The following lemma ensures the existence and
continuity of the maps  
$(D^\lambda _1)^n\varphi $ and $(D^\lambda _2)^n\varphi$ from Definition~\ref{def 4.3}.

\begin{lemma}\label{L 4.8}
If $\varphi \in \Ci(H\times G,\Yc)$, 
then for all $x\in H$, $g\in G$, $n\ge 1$,  
$\beta_1,\dotsc,\beta_n \in \Lg(H)$, 
$\alpha_1,\dots,\alpha_n\in\Lg(G)$, 
we have:
\begin{itemize}
\item[(a)] $((D^\lambda _1)^n\varphi)(x,g;\beta_1,\dotsc,\beta_n)=
((D^\lambda)^n\varphi)(x,g;(\beta_1,\1),(\beta_2,\1),\dotsc,(\beta_n,\1))$
\item[(b)] $((D^\lambda _2)^n\varphi)(x,g;\alpha_1,\dotsc,\alpha_n)=
((D^\lambda)^n\varphi)(x,g;(\1,\alpha_1),(\1,\alpha_2),\dotsc,(\1,\alpha_n))$
\item[(c)] The maps 
$(D^\lambda _1)^n\varphi\colon H\times G\times\Lg^n(H)\to \Yc$ 
and 
$(D^\lambda _2)^n\varphi\colon H\times G\times\Lg^n(G)\to \Yc$ are continuous.
\end{itemize}
\end{lemma}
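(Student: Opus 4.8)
The plan is to prove the three statements (a), (b), (c) simultaneously by induction on $n$, since the identities in (a) and (b) are exactly what makes the continuity in (c) follow from the definition of $\Ci(H\times G,\Yc)$.

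First I would treat the base case $n=1$. Here (c) for $(D^\lambda_1)^1$ and $(D^\lambda_2)^1$ reduces, via (a) and (b) with $n=1$, to continuity of $(D^\lambda\varphi)(x,g;(\beta,\1))$ and $(D^\lambda\varphi)(x,g;(\1,\alpha))$, which are restrictions of the continuous map $D^\lambda\varphi\colon (H\times G)\times\Lg(H\times G)\to\Yc$ along the continuous embeddings $\beta\mapsto(\beta,\1)$ and $\alpha\mapsto(\1,\alpha)$ of $\Lg(H)$ and $\Lg(G)$ into $\Lg(H\times G)=\Lg(H)\times\Lg(G)$; so (c) is immediate once (a), (b) are known. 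For (a) with $n=1$ I would observe that the one-parameter subgroup $(\beta,\1)\in\Lg(H\times G)$ satisfies $(x,g)\cdot(\beta,\1)(t)=(x\beta(t),g)$, so directly from \eqref{aux4_eq1},
$$((D^\lambda\varphi)(x,g;(\beta,\1))=\lim_{t\to0}\frac{\varphi(x\beta(t),g)-\varphi(x,g)}{t}=(D^\lambda_1\varphi)(x,g;\beta),$$
and symmetrically for (b).

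For the inductive step, assume (a), (b), (c) hold for $n$ and consider $n+1$. By Definition~\ref{def 4.3},
$$((D^\lambda_1)^{n+1}\varphi)(x,g;\beta_1,\dots,\beta_{n+1})=\frac{\de}{\de t}\Big\vert_{t=0}((D^\lambda_1)^n\varphi)(x\beta_{n+1}(t),g;\beta_1,\dots,\beta_n),$$
and by the inductive hypothesis (a) the inner function equals $((D^\lambda)^n\varphi)(x\beta_{n+1}(t),g;(\beta_1,\1),\dots,(\beta_n,\1))$. Since $((D^\lambda)^n\varphi)(\cdot;(\beta_1,\1),\dots,(\beta_n,\1))\in\Cc^1(H\times G,\Yc)$ as $\varphi\in\Ci(H\times G,\Yc)$, differentiating along the one-parameter subgroup $(\beta_{n+1},\1)$ and using that $(x,g)\cdot(\beta_{n+1},\1)(t)=(x\beta_{n+1}(t),g)$ gives exactly $((D^\lambda)^{n+1}\varphi)(x,g;(\beta_1,\1),\dots,(\beta_n,\1),(\beta_{n+1},\1))$; in particular the limit defining $(D^\lambda_1)^{n+1}\varphi$ exists, proving (a) at level $n+1$, and (b) follows by the same argument with the roles of $H$ and $G$ interchanged. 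Finally (c) at level $n+1$ follows from (a), (b) at level $n+1$ together with the continuity of $(D^\lambda)^{n+1}\varphi$ on $(H\times G)\times\Lg^{n+1}(H\times G)$ (part of $\varphi\in\Ci(H\times G,\Yc)$) and the continuity of the embeddings $\Lg^{n+1}(H)\hookrightarrow\Lg^{n+1}(H\times G)$, $(\beta_1,\dots,\beta_{n+1})\mapsto((\beta_1,\1),\dots,(\beta_{n+1},\1))$, and likewise for $G$; these embeddings are continuous because the canonical identification $\Lg(H\times G)\cong\Lg(H)\times\Lg(G)$ is a homeomorphism and $\1$ is a fixed point.

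The main obstacle I anticipate is purely bookkeeping rather than conceptual: one must be careful that the "new" variable $\beta_{n+1}$ (resp.\ $\alpha_{n+1}$) is the one being differentiated \emph{last} in Definition~\ref{def 4.3}, so that in the multivariable derivative $(D^\lambda)^{n+1}\varphi$ it occupies the last slot, matching the order in the statement of (a)/(b); keeping the index order consistent through the induction, and verifying once and for all that $\Lg(H\times G)$ is canonically $\Lg(H)\times\Lg(G)$ with the product topology so that all the relevant insertion maps are continuous, is where the care is needed. Everything else is a direct unwinding of the definitions.
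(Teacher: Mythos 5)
Your proof is correct and takes essentially the same route as the paper, which simply declares (a) and (b) ``straightforward'' and derives (c) from them using $\varphi\in\Ci(H\times G,\Yc)$; your induction on $n$ with the observation $(x,g)\cdot(\beta,\1)(t)=(x\beta(t),g)$ and the homeomorphism $\Lg(H\times G)\cong\Lg(H)\times\Lg(G)$ is exactly the unwinding the authors leave to the reader.
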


\begin{proof}
Assertions (a) and (b) are straightforward, 
and (c) follows from (a) and (b), by using the hypothesis
$\varphi \in \Ci(H\times G,\Yc)$.
\end{proof}

\begin{proposition}\label{P 4.9}
For every $\varphi \in \Ci(H\times G,\Yc)$, $n\ge 1$, and $\beta_1,\dotsc,\beta_n \in \Lg(H)$, 
we have 
$$\begin{aligned}
((D^\lambda)^n \widetilde{\varphi})(x;\beta_1,\dotsc,\beta_n)(g)
&=((D^\lambda _1)^n\varphi)(x,g;\beta_1,\dotsc,\beta_n)  \\
&=((D^\lambda)^n\varphi)(x,g;(\beta_1,\1),(\beta_2,\1),\dotsc,(\beta_n,\1)).
\end{aligned}$$
\end{proposition}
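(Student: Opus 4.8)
The plan is to prove the identity in Proposition~\ref{P 4.9} by induction on $n\ge 1$, using Lemma~\ref{L 4.8}(a) to reduce the middle expression to the rightmost one (so the real content is the first equality), and using Lemma~\ref{T 4.25} and Lemma~\ref{L 4.26} to justify differentiating under the evaluation map. First I would dispose of the second equality: it is exactly Lemma~\ref{L 4.8}(a) applied to $\varphi$, so nothing new is needed there. The work is therefore to show
$$((D^\lambda)^n\widetilde{\varphi})(x;\beta_1,\dotsc,\beta_n)(g)=((D^\lambda_1)^n\varphi)(x,g;\beta_1,\dotsc,\beta_n),$$
where on the left $\widetilde{\varphi}$ is viewed as an $\Ec(G,\Yc)$-valued function on $H$ via Notation~\ref{N5.2}, and $(D^\lambda)^n$ is the iterated derivative in the locally convex space $\Ec(G,\Yc)$.

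For the base case $n=1$, I would fix $x\in H$ and $\beta\in\Lg(H)$ and examine the difference quotient $\frac1t(\widetilde{\varphi}(x\beta(t))-\widetilde{\varphi}(x))\in\Ec(G,\Yc)$; evaluated at $g\in G$ this is $\frac1t(\varphi(x\beta(t),g)-\varphi(x,g))$, which converges pointwise to $(D^\lambda_1\varphi)(x,g;\beta)$ by Definition~\ref{def 4.3}. The issue is that pointwise convergence is not convergence in $\Ec(G,\Yc)$, so I must upgrade it: apply Lemma~\ref{L 4.26} with $H$ replaced by $H$ and the function $h=\varphi(\cdot,g)$ — more precisely, use the ``Taylor expansion with remainder'' $\varphi(x\beta(t),g)=\varphi(x,g)+t(D^\lambda_1\varphi)(x,g;\beta)+t\chi(t,x,g)$ with $\chi$ continuous and $\chi(0,\cdot)=0$ — which is exactly the content needed to control the remainder term uniformly on the compact sets entering the seminorms $p_{K_1,K_2}$ of Definition~\ref{distrib}. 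One then checks that the difference quotient, as a curve in $\Ec(G,\Yc)$, is continuous at $t=0$ with the asserted value, and moreover that $x\mapsto (D^\lambda_1\varphi)(x,\cdot;\beta)\in\Ec(G,\Yc)$ is continuous, which follows from Lemma~\ref{L 4.8}(c) together with the definition of the seminorms $p_{K_1,K_2}$; hence $\widetilde{\varphi}\in\Cc^1(H,\Ec(G,\Yc))$ and $(D^\lambda_\beta\widetilde{\varphi})(x)=(D^\lambda_1\varphi)(x,\cdot;\beta)$.

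For the inductive step, assume the formula holds for $n$, so that $(D^\lambda_{\beta_{n-1}}\cdots D^\lambda_{\beta_1}\widetilde{\varphi})(x)=((D^\lambda_1)^n\varphi)(x,\cdot;\beta_1,\dotsc,\beta_n)$ as an element of $\Ec(G,\Yc)$. Set $\psi:=((D^\lambda_1)^n\varphi)(\cdot,\cdot;\beta_1,\dotsc,\beta_n)\in\Ci(H\times G,\Yc)$, which is legitimate by Lemma~\ref{L 4.8}(a) and the hypothesis $\varphi\in\Ci(H\times G,\Yc)$ — indeed $(D^\lambda_1)^n\varphi$ equals a restriction of $(D^\lambda)^n\varphi$ and smoothness is preserved. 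Then $\widetilde{\psi}$ in the sense of Notation~\ref{N5.2} coincides with the iterated derivative curve above, and applying the already-proved case $n=1$ to $\psi$ in place of $\varphi$ gives $(D^\lambda_{\beta_{n+1}}\widetilde{\psi})(x)=(D^\lambda_1\psi)(x,\cdot;\beta_{n+1})=((D^\lambda_1)^{n+1}\varphi)(x,\cdot;\beta_1,\dotsc,\beta_{n+1})$, which is the claim for $n+1$. Evaluating at $g$ and invoking Lemma~\ref{L 4.8}(a) once more yields the full chain of equalities.

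\textbf{Main obstacle.} The one genuinely delicate point is the base case: passing from pointwise convergence of the difference quotients $\frac1t(\varphi(x\beta(t),g)-\varphi(x,g))$ to convergence in the locally convex space $\Ec(G,\Yc)$, i.e.\ uniformly after applying every seminorm $p_{K_1,K_2}$ (which involves iterated $G$-derivatives $(D^\lambda_\gamma)$ over a compact $K_1\subseteq\Lg^k(G)$ and $x$ over a compact $K_2\subseteq G$). The resolution is to differentiate the Taylor-remainder identity of Lemma~\ref{L 4.26} in the $G$-variables: since $\chi(t,\cdot)$ inherits smoothness and $\chi(0,\cdot)=0$, Lemma~\ref{L 4.2} (uniform continuity on compacta, essentially the continuous exponential law) shows $\sup_{\gamma\in K_1,\,y\in K_2}|(D^\lambda_\gamma\chi)(t,y)|\to 0$ as $t\to 0$, which gives exactly the seminorm estimate $p_{K_1,K_2}\big(\frac1t(\widetilde{\varphi}(x\beta(t))-\widetilde{\varphi}(x))-(D^\lambda_1\varphi)(x,\cdot;\beta)\big)\to 0$. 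One must also verify that one is allowed to commute $(D^\lambda_\gamma)$ in the $G$-variables past the difference quotient and the remainder decomposition, which is immediate because these operations are all along distinct one-parameter subgroups of the product group and the relevant iterated derivatives exist and are continuous by Lemma~\ref{L 4.8}(c).
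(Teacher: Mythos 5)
Your proof establishes the stated identities, but it takes on considerably more than the paper does at this point, and the extra part is where the one soft spot lies. The paper's own proof of Proposition \ref{P 4.9} is a purely formal induction on $n$: the second equality is Lemma \ref{L 4.8}(a), and the first is the pointwise computation $\frac{\de}{\de t}\big\vert_{t=0}\widetilde{\varphi}(x\beta(t))(g)=\frac{\de}{\de t}\big\vert_{t=0}\varphi(x\beta(t),g)$ iterated $n$ times; the question of whether the difference quotients converge in the topology of $\Ec(G,\Yc)$ (rather than merely pointwise in $g$) is explicitly deferred by Remark~\ref{obs 4.10} to Proposition~\ref{P 4.27}. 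You instead prove the convergence in $\Ec(G,\Yc)$ already here, which yields the stated formula as a by-product (evaluation at $g$ is continuous on $\Ec(G,\Yc)$, being dominated by the seminorm $p_{\emptyset,\{g\}}$ of Definition~\ref{distrib}). That is a legitimate, self-contained route, and your inductive step --- applying the base case to $\psi=((D^\lambda_1)^n\varphi)(\cdot,\cdot;\beta_1,\dotsc,\beta_n)$, which is again in $\Ci(H\times G,\Yc)$ --- is sound and arguably cleaner than the paper's Proposition~\ref{P 4.27}, which redoes the seminorm estimate for general $n$ directly.

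The soft spot is in your resolution of the ``main obstacle'': you propose to differentiate the remainder identity of Lemma~\ref{L 4.26} in the $G$-variables, asserting that $\chi(t,\cdot)$ ``inherits smoothness''. Lemma~\ref{L 4.26} only provides a \emph{continuous} $\chi$, so that step is not justified as written. The paper's Proposition~\ref{P 4.27} reverses the order of operations precisely to avoid this: it first applies the $G$-derivatives $(D^\lambda)^s$ to the difference quotient --- using Lemmas \ref{L 4.18} and \ref{L 4.19} to move the factor $(\beta_{n+1},\1)$ past the commuting factors $(\1,\alpha_i)$ --- and only then produces a remainder, in integral form via the fundamental theorem of calculus, $B(t)=B(0)+t\int_0^1 B'(tz)\,\de z$, whose continuity follows from Lemma~\ref{T 4.25} and whose uniform smallness on $K_2\times K_1$ follows from Lemma~\ref{L 4.2}. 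If you adopt that ordering, your argument closes; as far as Proposition~\ref{P 4.9} itself is concerned, only the pointwise computation is required, and that part of your proposal is complete and agrees with the paper.
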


\begin{proof}
The last equality follows from Lemma \ref{L 4.8}(a). 
For the first 
equality we will perform the proof by induction on $n$.

The case $n=1$: 
For $x\in H$, $g\in G$ and $\beta\in\Lg(H)$ we have 
$$(D^\lambda \widetilde{\varphi})(x;\beta)(g)=\frac{\de}{\de t}\Big\vert_{t=0} 
\widetilde{\varphi}(x \beta(t))(g)
=\frac{\de}{\de t}\Big\vert_{t=0} \varphi(x \beta(t),g)
=(D^\lambda _1 \varphi)(x,g;\beta).$$
Now suppose that the assertion was already proved  for $n$. 
For $n+1$ we have 
$$\begin{aligned}
((D^\lambda )^{n+1}\widetilde{\varphi})(x;\beta_1,\dotsc,\beta_n,\beta_{n+1})(g)
&=\frac{\de}{\de t}\Big\vert_{t=0} ((D^\lambda)^n \widetilde{\varphi})
(x \beta_{n+1}(t);\beta_1,\dotsc,\beta_n)(g) \\
&=\frac{\de}{\de t}\Big\vert_{t=0} ((D^\lambda _1)^n\varphi)
(x \beta_{n+1}(t),g;\beta_1,\dotsc,\beta_n) \\
&=((D^\lambda _1)^{n+1}\varphi)(x,g;\beta_1,\dotsc,\beta_n,\beta_{n+1}) 
\end{aligned}$$
and proof ends.
\end{proof}

\begin{remark}\label{obs 4.10}
\normalfont
The formula from Proposition~\ref{P 4.9} gives us the point values  of the 
derivatives of the function $\widetilde{\varphi}$ introduced in Notation~\ref{N5.2}. 
We still have to show that the convergence of the differential quotients to these values 
holds in the topology of $\Ec(G,\Yc)$. 
This task will be accomplished in Proposition~\ref{P 4.27}. 
\end{remark}

\begin{lemma}\label{L 4.11}
If $\varphi\in\Ci(H\times G,\Yc)$, then for all $x\in H$, $g\in G$, $n\ge 1$, and $\alpha_1,\dots,\alpha_n\in\Lg(G)$ we have 
$$\begin{aligned}
((D^\lambda )^n(\widetilde{\varphi}(x)))(g;\alpha_1,\dotsc,\alpha_n)
&=
((D^\lambda _2)^n\varphi)(x,g;\alpha_1,\dotsc,\alpha_n) \\
&=((D^\lambda)^n\varphi)(x,g;(\1,\alpha_1),(\1,\alpha_2),\dotsc,(\1,\alpha_n)).
\end{aligned}$$
\end{lemma}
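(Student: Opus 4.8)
The statement asserts, for $\varphi\in\Ci(H\times G,\Yc)$ and fixed $x\in H$, that the function $\widetilde\varphi(x)\colon G\to\Yc$, $g\mapsto\varphi(x,g)$, is smooth with iterated derivatives in the $G$-directions given by the partial derivatives $(D^\lambda_2)^n\varphi$ evaluated at $(x,g)$. The second displayed equality is just Lemma~\ref{L 4.8}(b), so the real content is the first equality. The natural approach is a straightforward induction on $n$, entirely parallel to the proof of Proposition~\ref{P 4.9}, but now differentiating in the second variable while holding $x$ frozen.

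First I would treat the base case $n=1$: for $g\in G$ and $\alpha\in\Lg(G)$,
\begin{align*}
(D^\lambda(\widetilde\varphi(x)))(g;\alpha)
&=\frac{\de}{\de t}\Big\vert_{t=0}\widetilde\varphi(x)(g\alpha(t))\\
&=\frac{\de}{\de t}\Big\vert_{t=0}\varphi(x,g\alpha(t))
=(D^\lambda_2\varphi)(x,g;\alpha),
\end{align*}
which is exactly Definition~\ref{def 4.3}. For the inductive step, assuming the formula for $n$, I would write
\begin{align*}
((D^\lambda)^{n+1}(\widetilde\varphi(x)))(g;\alpha_1,\dotsc,\alpha_{n+1})
&=\frac{\de}{\de t}\Big\vert_{t=0}((D^\lambda)^n(\widetilde\varphi(x)))(g\alpha_{n+1}(t);\alpha_1,\dotsc,\alpha_n)\\
&=\frac{\de}{\de t}\Big\vert_{t=0}((D^\lambda_2)^n\varphi)(x,g\alpha_{n+1}(t);\alpha_1,\dotsc,\alpha_n)\\
&=((D^\lambda_2)^{n+1}\varphi)(x,g;\alpha_1,\dotsc,\alpha_{n+1}),
\end{align*}
using the inductive hypothesis in the second step and Definition~\ref{def 4.3} in the last.

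There is one subtlety that must be handled carefully: to even invoke $(D^\lambda)^n(\widetilde\varphi(x))$ in the induction, one must know that $\widetilde\varphi(x)\in\Cc^n(G,\Yc)$, i.e.\ that each $(D^\lambda)^k(\widetilde\varphi(x))$ is \emph{well defined and continuous} on $G\times\Lg^k(G)$, not merely that its candidate point-values exist. But this is immediate from Lemma~\ref{L 4.8}: part (b) identifies the putative derivative $(D^\lambda_2)^k\varphi$ with a restriction of $(D^\lambda)^k\varphi$ (plugging $\1$ into the $H$-slots), and part (c) guarantees its continuity since $\varphi\in\Ci(H\times G,\Yc)$. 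Thus the induction is legitimate, and the main obstacle is merely bookkeeping — keeping straight which slots are differentiated and confirming that the difference quotient in the first variable (for $x$) never needs to be taken, so that no hypothesis about $H$-directional smoothness of $\widetilde\varphi(x)$ enters. I expect the proof to be short, citing Lemma~\ref{L 4.8} for the well-definedness and continuity and running the one-line induction above; indeed the proof should read essentially as a mirror image of the proof of Proposition~\ref{P 4.9}.
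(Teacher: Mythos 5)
Your proposal is correct and matches the paper's intent exactly: the paper's own proof consists of the single remark that the argument is similar to that of Proposition~\ref{P 4.9}, which is precisely the induction on $n$ you carry out (with the second equality supplied by Lemma~\ref{L 4.8}(b)). Your additional care about the well-definedness and continuity of $(D^\lambda)^k(\widetilde\varphi(x))$ via Lemma~\ref{L 4.8} is a sound and welcome elaboration of what the paper leaves implicit.
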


\begin{proof}
The proof is similar to the one of Proposition~\ref{P 4.9}. 
\end{proof}

\begin{remark}\label{obs 4.12}
\normalfont
It follows by Lemma \ref{L 4.11} and Lemma \ref{L 4.8}(c) that 
if $\varphi\in\Ci(H\times G,\Yc)$, then 
for all $x\in G$ we have 
$\widetilde{\varphi}(x):=\varphi(x,\cdot)\in \Ci(G,\Yc)$, hence the function
$\widetilde{\varphi}\colon H\to \Ec(G,\Yc)$ (see Notation~\ref{N5.2}) is well defined.
\end{remark}

\subsection*{Continuity of $\widetilde{\varphi}$} 

\begin{proposition}\label{P 4.14}
If $\varphi\in\Ci(H\times G,\Yc)$, then 
the function $\widetilde{\varphi}\colon H\to \Ec(G,\Yc)$ (see Notation~\ref{N5.2}) 
is continuous. 
\end{proposition}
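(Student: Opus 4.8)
The plan is to show that $\widetilde{\varphi}\colon H\to\Ec(G,\Yc)$ is continuous at an arbitrary point $x_0\in H$ by estimating, for each generating seminorm $p_{K_1,K_2}$ of $\Ec(G,\Yc)$, the quantity $p_{K_1,K_2}(\widetilde{\varphi}(x)-\widetilde{\varphi}(x_0))$ and showing it tends to $0$ as $x\to x_0$. So fix an integer $k\ge 0$, compact sets $K_1\subseteq\Lg^k(G)$ and $K_2\subseteq G$, and a continuous seminorm $\vert\cdot\vert$ on $\Yc$. Using Lemma~\ref{L 4.11}, for $\alpha=(\alpha_1,\dots,\alpha_k)\in K_1$ and $g\in K_2$ we have
$$
(D^\lambda_\alpha(\widetilde{\varphi}(x)))(g)=((D^\lambda)^k\varphi)(x,g;(\1,\alpha_1),\dots,(\1,\alpha_k)),
$$
so that
$$
p_{K_1,K_2}(\widetilde{\varphi}(x)-\widetilde{\varphi}(x_0))
=\sup_{\alpha\in K_1,\,g\in K_2}\bigl\vert ((D^\lambda)^k\varphi)(x,g;\iota(\alpha))-((D^\lambda)^k\varphi)(x_0,g;\iota(\alpha))\bigr\vert,
$$
where $\iota(\alpha)=((\1,\alpha_1),\dots,(\1,\alpha_k))$.

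Next I would package this as an instance of Lemma~\ref{L 4.2}. Consider the continuous map
$$
f\colon H\times\bigl(K_2\times K_1\bigr)\to\Yc,\quad
f(x,(g,\alpha))=((D^\lambda)^k\varphi)(x,g;\iota(\alpha)),
$$
which is continuous because $\varphi\in\Ci(H\times G,\Yc)$ and the map $\alpha\mapsto\iota(\alpha)$ is continuous (it is built from coordinate inclusions and the constant map $\1$). The set $T:=K_2\times K_1$ is compact, being a product of compact sets. Then Lemma~\ref{L 4.2}, applied with $X=H$, this $T$, the point $x_0$, the compact set $K:=T$ itself, and the seminorm $\vert\cdot\vert$, yields exactly
$$
\lim_{x\to x_0}\sup_{(g,\alpha)\in K_2\times K_1}\bigl\vert f(x,(g,\alpha))-f(x_0,(g,\alpha))\bigr\vert=0,
$$
i.e. $\lim_{x\to x_0}p_{K_1,K_2}(\widetilde{\varphi}(x)-\widetilde{\varphi}(x_0))=0$. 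Since the seminorms $p_{K_1,K_2}$ (over all $k$, $K_1$, $K_2$, and continuous seminorms on $\Yc$) generate the topology of $\Ec(G,\Yc)$, this proves continuity of $\widetilde{\varphi}$ at $x_0$, and $x_0$ was arbitrary.

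I do not anticipate a genuine obstacle here: the only point requiring a little care is the bookkeeping that the point-value formula of Lemma~\ref{L 4.11} really expresses $D^\lambda_\alpha(\widetilde{\varphi}(x))$ as a continuous function of the joint variable $(x,g,\alpha)$ after the substitution inserting the constant subgroup $\1$ in the $H$-slots, and that this substitution map is continuous so that the composite $f$ above is continuous on $H\times T$. Once that is in place, the statement is an immediate application of the uniform-convergence-on-compacta lemma (Lemma~\ref{L 4.2}), which is the heart of the exponential-law circle of ideas. (Note that Remark~\ref{obs 4.12} has already guaranteed $\widetilde{\varphi}(x)\in\Ci(G,\Yc)$, so the expression $p_{K_1,K_2}(\widetilde{\varphi}(x)-\widetilde{\varphi}(x_0))$ is meaningful.)
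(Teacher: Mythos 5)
Your proposal is correct and follows essentially the same route as the paper: rewrite $p_{K_1,K_2}(\widetilde{\varphi}(x)-\widetilde{\varphi}(x_0))$ via Lemma~\ref{L 4.11} in terms of the partial derivatives $(D^\lambda_2)^k\varphi$ (your $f(x,(g,\alpha))=((D^\lambda)^k\varphi)(x,g;\iota(\alpha))$ is exactly this, by Lemma~\ref{L 4.8}(b), with continuity from Lemma~\ref{L 4.8}(c)), and then apply Lemma~\ref{L 4.2} with the compact set $K_2\times K_1$. The only cosmetic difference is that the paper splits off the case $K_1=\emptyset$ separately, whereas you absorb it as $k=0$.
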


\begin{proof}
We will show that for any  seminorm $p=p_{{K_1},{K_2}}$ on $\Ec(G,\Yc)$ as in Definition \ref{distrib}
we have $\lim\limits_{x\to x_0} p(\widetilde{\varphi}(x)-\widetilde{\varphi}(x_0))=0$, 
which is tantamount to the following condition: 
$$(\forall x_0\in H)(\forall \varepsilon>0)(\exists U\in \mathcal{V}(x_0))(\forall x\in U)\quad 
p(\widetilde{\varphi}(x)-\widetilde{\varphi}(x_0))\le \varepsilon.$$
According to the compact sets $K_1$ and $K_2$ involved in the definition of
 the seminorm~$p$, we will analyze separately the two cases that can occur. 
Let $x_0\in H$ and $\varepsilon>0$ be arbitrary, fixed throughout the proof. 

Case (a): $p=p_{{K_1},{K_2}}$, where $K_2\subseteq G$ is any compact set and $K_1=\emptyset$.
If we denote $E(x):=p(\widetilde{\varphi}(x)-\widetilde{\varphi}(x_0))$, 
then  
$$E(x)=\sup_{g\in {K_2}} \lvert \widetilde{\varphi}(x)(g)-\widetilde{\varphi}(x_0)(g) \rvert
=\sup_{g\in {K_2}} \lvert \varphi(x,g)-\varphi(x_0,g) \rvert$$
hence the conclusion follows directly by applying Lemma \ref{L 4.2} with 
$x_0\in X=H$, $T=G$, $K=K_2$ and $f=\varphi\colon H\times G\to \Yc$,  
which is a continuous function since $\varphi \in \Ci(H\times G,\Yc)$. 
This completes the proof of Case (a). 

Case (b)  $p=p_{{K_1},{K_2}}$, for arbitrary compact sets $K_2\subseteq G$ and  
 $K_1\subseteq \Lg^n(G)$ compact, for some $n\ge 1$.

Denote again $E(x):=p(\widetilde{\varphi}(x)-\widetilde{\varphi}(x_0))$. 
In this case we have 
$$\begin{aligned}
E(x)
&=\sup\{ \lvert ((D^\lambda)^n (\widetilde{\varphi}(x)-\widetilde{\varphi}(x_0)))(g;\gamma)\rvert \mid 
g\in K_2,\gamma\in K_1 \} \\
&=\sup\{\lvert ((D^\lambda)^n(\widetilde{\varphi}(x)))(g;\gamma)-
((D^\lambda)^n(\widetilde{\varphi}(x_0)))(g;\gamma) \rvert \mid 
g\in K_2,\gamma\in K_1 \}.
\end{aligned}$$
By using Lemma \ref{L 4.11} we obtain  
$$E(x)=\sup\{\lvert ((D^\lambda_2)^n \varphi) (x,g;\gamma)- 
((D^\lambda_2)^n \varphi) (x_0,g;\gamma) \rvert \mid  
g\in K_2,\gamma\in K_1  \}$$
hence the conclusion follows by applying Lemma \ref{L 4.2} with 
$x_0\in X=H$, $T=G\times \Lg^n(G)$, and the compact set $K=K_2\times K_1\subseteq T$, 
since 
$f=(D^\lambda_2)^n \varphi\colon H\times G\times \Lg^n(G)\to \Yc$ 
is a continuous function by Lemma \ref{L 4.8}(c). 

This completes the proof.
\end{proof}

\subsection*{Smoothness of $\widetilde{\varphi}$} 

\begin{definition}\label{def 4.16}
\normalfont
Let $G$ be any topological group. 
We say that $\alpha,\beta \in \Lg(G)$ \emph{commute} if 
$$(\forall s,t\in\RR )\quad \alpha(t)\beta(s)=\beta(s)\alpha(t).$$
\end{definition}

\begin{remark}\label{obs 4.17}
\normalfont
If $G$ and $H$ are any topological groups, then 
every $\alpha \in \Lg(G)$ commutes with $\1\in \Lg(G)$, 
and for every  $\alpha\in\Lg(G)$ and $\beta \in \Lg(H)$ the elements
 $(\1,\alpha)$ and $(\beta,\1)$ from $\Lg(H\times G)$ commute.
\end{remark} 

\begin{lemma}\label{L 4.18}
Let $H$ be any topological group, $n\ge 2$, and
$\gamma_1,\dots,\gamma_n\in \Lg(H)$. 
Assume that 
$\gamma_i$ 
commutes with $\gamma_{i+1}$ for some $i\in \{1,2,\dots,n-1 \}$.
Then for any $f\in\Cc^n(H,\Yc)$ and $x\in H$ we have
$$\begin{aligned}
((D^\lambda)^n f)&(x;\gamma_n,\dotsc,\gamma_{i+2}, 
\gamma_{i+1},\gamma_ i,\gamma_{i-1},\dotsc,\gamma_1) \\
&=((D^\lambda)^n f)(x;\gamma_n,\dotsc,\gamma_{i+2}, 
\gamma_i,\gamma_ {i+1},\gamma_{i-1},\dotsc,\gamma_1).
\end{aligned}$$
\end{lemma}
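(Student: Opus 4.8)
The plan is to reduce the statement to the commuting case for ordinary one-parameter derivatives and then exploit a clean formula for the second-order mixed derivative of a $\Cc^2$-function along two commuting one-parameter subgroups. Since both sides of the claimed identity only involve the innermost two slots $\gamma_{i+1},\gamma_i$, I would first use the inductive definition of $(D^\lambda)^n$ to peel off the outer $n-2$ derivatives: writing $g:=D^\lambda_{\gamma_{i-1}}(\cdots(D^\lambda_{\gamma_1}f)\cdots)\in\Cc^2(H,\Yc)$ (this belongs to $\Cc^2$ because $f\in\Cc^n$ and $n-i+1\ge 2$... more precisely, after applying the innermost $i-1$ derivatives one still has a $\Cc^{n-i+1}$-function, and $n-i+1\ge 2$), the desired equality is equivalent to
\[
D^\lambda_{\gamma_{i+1}}(D^\lambda_{\gamma_i}g)=D^\lambda_{\gamma_i}(D^\lambda_{\gamma_{i+1}}g)
\]
as continuous functions on $H$, after which applying the remaining outer operators $D^\lambda_{\gamma_{i+2}},\dots,D^\lambda_{\gamma_n}$ to both sides preserves the equality. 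So the whole lemma comes down to the case $n=2$.

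For the case $n=2$, set $\alpha:=\gamma_i$, $\beta:=\gamma_{i+1}$, which commute, and $g\in\Cc^2(H,\Yc)$. The key is to apply Lemma~\ref{L 4.26} twice. By that lemma there is a continuous $\chi\colon\RR\times H\to\Yc$ with $g(xa\alpha(s))=g(xa)+s(D^\lambda_\alpha g)(xa)+s\chi(s,xa)$ and $\chi(0,\cdot)=0$; here I would actually prefer to work directly with the difference quotients. Writing $\Delta_\alpha^s h(x):=\tfrac1s\bigl(h(x\alpha(s))-h(x)\bigr)$ for $s\ne 0$, commutativity of $\alpha,\beta$ gives the pointwise identity $\Delta_\beta^t\Delta_\alpha^s g=\Delta_\alpha^s\Delta_\beta^t g$ for all $s,t\ne 0$ and all $x$, simply because $x\alpha(s)\beta(t)=x\beta(t)\alpha(s)$ and the four-term expression defining either side is symmetric in the roles of $\alpha$ and $\beta$. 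Then $D^\lambda_\beta(D^\lambda_\alpha g)(x)=\lim_{t\to0}\Delta_\beta^t\bigl(D^\lambda_\alpha g\bigr)(x)=\lim_{t\to0}\lim_{s\to0}\Delta_\beta^t\Delta_\alpha^s g(x)$, and I need to interchange these two limits; symmetrically the other side equals the iterated limit in the opposite order.

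The main obstacle is precisely this interchange of limits, and this is where Lemma~\ref{L 4.26} (together with Lemma~\ref{L 4.2}) does the work. Apply Lemma~\ref{L 4.26} to the continuous function $h:=D^\lambda_\alpha g$ and to $\beta\in\Lg(H)$: there is a continuous $\psi\colon\RR\times H\to\Yc$, $\psi(0,\cdot)=0$, with $\Delta_\beta^t h(x)=(D^\lambda_\beta h)(x)+\psi(t,x)$ for $t\ne0$, i.e. $\Delta_\beta^t(D^\lambda_\alpha g)(x)\to (D^\lambda_\beta D^\lambda_\alpha g)(x)$ uniformly for $x$ in compact sets as $t\to0$, by continuity of $\psi$ and $\psi(0,\cdot)=0$ (Lemma~\ref{L 4.2}). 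On the other hand, fixing $t\ne0$ and using $\Delta_\beta^t\Delta_\alpha^s g=\Delta_\alpha^s\Delta_\beta^t g$, one has $\lim_{s\to0}\Delta_\beta^t\Delta_\alpha^s g(x)=\lim_{s\to0}\Delta_\alpha^s\bigl(\Delta_\beta^t g\bigr)(x)=\bigl(D^\lambda_\alpha(\Delta_\beta^t g)\bigr)(x)$, which exists because $\Delta_\beta^t g$ is a finite linear combination of right-translates of $g\in\Cc^1$ (translates of $\Cc^1$-functions are again $\Cc^1$, cf. Remark~\ref{transl_deriv}/Proposition~\ref{transl_lemma} applied with right translations, or directly from the definition). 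Applying Lemma~\ref{L 4.26} once more to $g$ and $\alpha$ to get $\Delta_\alpha^s g=(D^\lambda_\alpha g)+(\text{error}\to0 \text{ unif. on compacts})$, one checks that $D^\lambda_\alpha(\Delta_\beta^t g)=\Delta_\beta^t(D^\lambda_\alpha g)$ and that as $t\to0$ this converges, uniformly on compact sets in $x$, to $(D^\lambda_\beta D^\lambda_\alpha g)(x)$. Thus the iterated limit in one order equals $(D^\lambda_\beta D^\lambda_\alpha g)(x)$; by the symmetric argument, swapping $\alpha$ and $\beta$ throughout and using $\Delta_\alpha^s\Delta_\beta^t g=\Delta_\beta^t\Delta_\alpha^s g$ again, the iterated limit in the other order equals $(D^\lambda_\alpha D^\lambda_\beta g)(x)$, and since the common underlying double family $\Delta_\beta^t\Delta_\alpha^s g(x)$ is the same, the two iterated limits agree. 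This yields $D^\lambda_\beta D^\lambda_\alpha g=D^\lambda_\alpha D^\lambda_\beta g$, completing the $n=2$ case and hence, by the reduction above, the lemma.
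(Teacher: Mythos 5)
Your reduction to the case of two adjacent commuting derivatives of a $\Cc^2$ function is sound (up to an immaterial mix-up about which end of the argument list $(x;\gamma_n,\dots,\gamma_1)$ is innermost: by Definition~\ref{aux4} the first listed direction is applied first, so here $\gamma_n$ is innermost; either way both sides share the same inner and outer blocks and only the adjacent pair $\gamma_i,\gamma_{i+1}$ is transposed). The identity $\Delta^t_\beta\Delta^s_\alpha g=\Delta^s_\alpha\Delta^t_\beta g$ for commuting $\alpha,\beta$ is also correct. The gap is in your final step: from the fact that the two iterated limits of the double family $F(s,t)=\Delta^t_\beta\Delta^s_\alpha g(x)$ both exist, with values $(D^\lambda_\beta D^\lambda_\alpha g)(x)$ and $(D^\lambda_\alpha D^\lambda_\beta g)(x)$, you conclude they are equal ``since the common underlying double family is the same.'' That inference is invalid: $F(s,t)=(s^2-t^2)/(s^2+t^2)$ has both iterated limits at $(0,0)$, equal to $-1$ and $+1$. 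Interchanging iterated limits requires a Moore--Osgood hypothesis, i.e.\ that one inner limit is uniform in the \emph{other limit parameter}. The uniformity you extract from Lemma~\ref{L 4.26} via Lemma~\ref{L 4.2} is uniformity in the space variable $x$ over compact subsets of $H$, which is irrelevant here; what is needed is, say, $\lim_{s\to0}F(s,t)=\Delta^t_\beta(D^\lambda_\alpha g)(x)$ uniformly for $t$ in a punctured neighborhood of $0$, and the error term this produces is $\tfrac1t\bigl(\chi(s,x\beta(t))-\chi(s,x)\bigr)$ with $\chi$ merely continuous and $\chi(0,\cdot)=0$ --- the factor $1/t$ destroys any uniform control. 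The missing uniformity is precisely the analytic content of the symmetry of second derivatives; mere existence of both iterated derivatives does not imply their equality even for scalar functions on $\RR^2$, and nowhere does your argument actually use the joint continuity of $(D^\lambda)^2 g$, which is the hypothesis that makes the statement true.

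The paper avoids this entirely by invoking \cite[Prop.~1.2.2.1]{BCR81}: the map $(t_1,\dots,t_n)\mapsto f(x\gamma_1(t_1)\cdots\gamma_n(t_n))$ belongs to $\Cc^n(\RR^n,\Yc)$, the iterated $D^\lambda$-derivatives are identified with its mixed partial derivatives at $0$, and the classical symmetry of mixed partials together with the relation $\gamma_i(t_i)\gamma_{i+1}(t_{i+1})=\gamma_{i+1}(t_{i+1})\gamma_i(t_i)$ gives the transposition. To repair your argument along its own lines you would need to establish the \emph{joint} limit of $\Delta(s,t)/(st)$ as $(s,t)\to(0,0)$, for instance by writing (using commutativity and \cite[Th.~1.5]{Gl02a} twice)
$\Delta(s,t)=\int_0^s\int_0^t(D^\lambda_\beta D^\lambda_\alpha g)(x\alpha(u)\beta(v))\,\de v\,\de u$
and appealing to the continuity of $(D^\lambda)^2g$; but that is essentially a proof of the cited result rather than a shortcut around it.
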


\begin{proof}

The function 
$$(t_1,\dotsc,t_n)\mapsto f(x \gamma_1(t_1)\gamma_2(t_2)\cdots \gamma_n(t_n))$$
belongs to $\Cc^n(\RR^n,\Yc)$ by 
\cite[Prop. 1.2.2.1]{BCR81}.
Therefore we obtain 
\allowdisplaybreaks
\begin{align}
((D^\lambda)^n f)&(x;\gamma_n,\dotsc,\gamma_{i+2}, 
\gamma_{i+1},\gamma_ i,\gamma_{i-1},\dotsc,\gamma_1) 
\nonumber \\
=&\frac{\partial^n}{\partial t_1\cdots\partial t_{i-1}
\partial t_i\partial t_{i+1}\partial t_{i+2}\dotsc\partial t_n}
\Big\vert_{t_1=\cdots=t_n=0}
f(x \gamma_1(t_1) \cdots \gamma_n(t_n)) \nonumber \\
=&\frac{\partial^n}{\partial t_1\cdots\partial t_{i-1}
\partial t_{i+1}\partial t_i \partial t_{i+2}\cdots\partial t_n}
\Big\vert_{t_1=\cdots=t_n=0}
f(x \gamma_1(t_1) \cdots \gamma_n(t_n)) \nonumber \\
=&\frac{\partial^n}{\partial t_1\cdots\partial t_{i-1}
\partial t_{i+1}\partial t_i \partial t_{i+2}\cdots\partial t_n}
\Big\vert_{t_1=\cdots=t_n=0} \nonumber \\
&f(x \gamma_1(t_1)\cdots \gamma_{i-1}(t_{i-1})\gamma_{i+1}(t_{i+1})
\gamma_i(t_i)\gamma_{i+2}(t_{i+2})\cdots\gamma_n(t_n))
\nonumber \\
=&((D^\lambda)^n f)(x;\gamma_n,\dots,\gamma_{i+2}, 
\gamma_i,\gamma_ {i+1},\gamma_{i-1},\dots,\gamma_1) \nonumber
\end{align}
which is just the required relationship.
\end{proof}

\begin{lemma}\label{L 4.19}
Let $G$ and $H$ be any topological groups and $\varphi \in \Ci(H\times G,\Yc)$. 
For some $s\ge 1$ let $\alpha_1,\dotsc,\alpha_s\in \Lg(G)$. 
Then the following assertions hold for all $x\in H$ and $g\in G$: 
\begin{itemize}
\item[(a)] For every   $\beta\in \Lg(H)$ we have  
$$((D^\lambda)^{s+1} \varphi)(x,g;(\beta,\1),(\1,\alpha_1),\dotsc,(\1,\alpha_s)) 
\hskip-0.8pt
=
\hskip-0.8pt
((D^\lambda)^{s+1} \varphi)(x,g;(\1,\alpha_1),\dotsc,(\1,\alpha_s),(\beta,\1)).$$
\item[(b)] For every $n\ge 1$ and $\beta_1,\dotsc,\beta_n,\beta_{n+1}\in \Lg(H)$ we have 
$$\begin{aligned}
((& D^\lambda)^{n+s+1}\varphi)(x,g;(\beta_1,\1),\dotsc,(\beta_n,\1),(\beta_{n+1},\1),
(\1,\alpha_1),\dotsc,(\1,\alpha_s)) \\
&=((D^\lambda)^{n+s+1}\varphi)(x,g;(\beta_1,\1),\dotsc,(\beta_n,\1),
(\1,\alpha_1),\dotsc,(\1,\alpha_s),(\beta_{n+1},\1)).
\end{aligned}$$
\end{itemize}
\end{lemma}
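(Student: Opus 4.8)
The plan is to deduce both assertions from the commutation relations recorded in Remark~\ref{obs 4.17} together with Lemma~\ref{L 4.18}, applied to the function $\varphi\in\Cc^{s+1}(H\times G,\Yc)$ (respectively $\Cc^{n+s+1}(H\times G,\Yc)$) on the product group $H\times G$. The key observation is that moving the one-parameter subgroup $(\beta,\1)$ past each of $(\1,\alpha_1),\dots,(\1,\alpha_s)$ is legitimate because, by Remark~\ref{obs 4.17}, $(\beta,\1)$ commutes with every $(\1,\alpha_j)$ in $\Lg(H\times G)$.

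For part~(a): I would apply Lemma~\ref{L 4.18} to the group $H\times G$, to the function $\varphi$, and to the list of one-parameter subgroups $\gamma_1=(\1,\alpha_s),\dots,\gamma_s=(\1,\alpha_1),\gamma_{s+1}=(\beta,\1)$ written in the order in which Lemma~\ref{L 4.18} expects them (recall that in $(D^\lambda)^n$ the subgroup $\gamma_1$ acts first, i.e.\ is innermost). Since $(\beta,\1)$ commutes with $(\1,\alpha_1)$, one application of Lemma~\ref{L 4.18} with $i=1$ (in the indexing convention there, the adjacent pair $\gamma_{i+1},\gamma_i$) swaps $(\beta,\1)$ with $(\1,\alpha_1)$; then a second application with $i=2$ moves it past $(\1,\alpha_2)$, and so on. After $s$ such transpositions the factor $(\beta,\1)$ has traveled from the outermost slot to the innermost slot, which is exactly the identity claimed in~(a). (Alternatively one can simply invoke Proposition~\ref{P 4.9} and Lemma~\ref{L 4.11}: the left-hand side equals $(D^\lambda_\beta((D^\lambda_{\alpha_s}\cdots D^\lambda_{\alpha_1})\varphi(\cdot,\cdot)))$ evaluated suitably, and the right-hand side is the same partial derivatives in a different order, so the point is just the symmetry of mixed partials for the $\Cc^\infty$ map $(t,t_1,\dots,t_s)\mapsto\varphi(x\beta(t),g\alpha_1(t_1)\cdots\alpha_s(t_s))$ on $\RR^{s+1}$, via \cite[Prop. 1.2.2.1]{BCR81}.)

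For part~(b): the argument is identical, with the block $(\beta_1,\1),\dots,(\beta_n,\1)$ playing the role of frozen outer slots that are not touched. One applies Lemma~\ref{L 4.18} repeatedly to slide $(\beta_{n+1},\1)$ inward past each of the $s$ factors $(\1,\alpha_j)$, using at each stage that $(\beta_{n+1},\1)$ commutes with $(\1,\alpha_j)$ by Remark~\ref{obs 4.17}. The factors $(\beta_i,\1)$ for $i\le n$ never participate in a transposition, so they stay in place, and after $s$ steps one lands on the right-hand side of~(b).

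The only genuinely delicate point is bookkeeping of the order convention in $(D^\lambda)^n$ versus the order in which Lemma~\ref{L 4.18} lists its arguments (innermost/outermost), so that each invocation of Lemma~\ref{L 4.18} is applied to a genuinely \emph{adjacent} pair of one-parameter subgroups one of which is $(\beta,\1)$ (resp.\ $(\beta_{n+1},\1)$) and the other one of the $(\1,\alpha_j)$. Once the indices are lined up correctly, each step is immediate from Remark~\ref{obs 4.17} and Lemma~\ref{L 4.18}, and no further analytic input is needed. I expect this indexing to be the main (purely notational) obstacle; there is no real analytic obstacle since smoothness of all the maps in sight has already been secured by Lemma~\ref{L 4.8} and Definition~\ref{def 4.3}.
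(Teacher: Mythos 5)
Your proof is correct and follows essentially the same route as the paper: the authors likewise deduce both assertions by applying Remark~\ref{obs 4.17} and Lemma~\ref{L 4.18} with $H$ replaced by $H\times G$, sliding the factor $(\beta,\1)$ (resp.\ $(\beta_{n+1},\1)$) past each $(\1,\alpha_j)$ through successive adjacent transpositions. The bookkeeping of the argument order that you flag is indeed the only subtlety, and your handling of it matches the paper's (the paper merely phrases it as starting from the right-hand side and transposing toward the left-hand side).
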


\begin{proof}
In both assertions one can start 
from the right-hand side of the equality to be proved, 
and one uses   
Remark \ref{obs 4.17} and Lemma \ref{L 4.18} for $H$ replaced by $H\times G$ 
for the pairs 
$(\1,\alpha_i)$ and $(\beta_{n+1},\1)$. 
One can thus obtain the order of arguments in the left-hand side of each of the two desired equalities. 
\end{proof}


We are now in a position to solve the problem mentioned in Remark~\ref{obs 4.10}.

\begin{proposition}\label{P 4.27}
Let $G$ and $H$ be any topological groups, $\Yc$ be any locally convex space and 
for any $\varphi\in\Ci(H\times G,\Yc)$ define as above 
$\widetilde{\varphi}\colon H\to \Ec(G,\Yc)$, $\widetilde{\varphi}(x)(g)=\varphi(x,g)$. 
Then for every 
$x_0\in G$ and $\beta^0_1,\dotsc,\beta^0_{n+1}\in \Lg(H)$ we have
$$\begin{aligned}
\lim\limits_{t\to 0}&\frac
{((D^\lambda)^n \widetilde{\varphi})(x_0 \beta^0_{n+1}(t);\beta^0_1,\dotsc,\beta^0_n)
-((D^\lambda)^n \widetilde{\varphi})(x_0;\beta^0_1,\dotsc,\beta^0_n)}
{t}\\ 
&= ((D^\lambda _1)^{n+1}\varphi)(x_0,\bullet\,;\beta^0_1,\dotsc,\beta^0_n,\beta^0_{n+1}) 
\end{aligned}$$
in the topology of $\Ec(G,\Yc)$ from Definition \ref{distrib}. 
\end{proposition}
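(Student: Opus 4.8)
The plan is to reduce the statement to the already-established continuity result of Proposition~\ref{P 4.14} applied to a cleverly chosen auxiliary function, together with the integral remainder formula of Lemma~\ref{L 4.26} and the continuity-of-weak-integrals Lemma~\ref{T 4.25}. First I would fix the compact sets $K_1\subseteq\Lg^m(G)$, $K_2\subseteq G$ and a continuous seminorm $\vert\cdot\vert$ on $\Yc$ defining a typical seminorm $p=p_{K_1,K_2}$ of $\Ec(G,\Yc)$, and observe via Lemma~\ref{L 4.11} and Proposition~\ref{P 4.9} that the point values of the derivative $(D^\lambda)^m$ applied to the difference quotient in $G$-directions are governed by the mixed partial derivative $(D^\lambda)^{n+1+m}\varphi$ evaluated on the tuple $((\beta^0_1,\1),\dots,(\beta^0_n,\1),(\beta^0_{n+1},\1),(\1,\alpha_1),\dots,(\1,\alpha_m))$. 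Lemma~\ref{L 4.19}(b) is the crucial bookkeeping tool here: it lets me move the $(\beta^0_{n+1},\1)$ factor past all the $(\1,\alpha_i)$ factors, so that the difference quotient in the $\beta^0_{n+1}$-direction of $((D^\lambda)^n\widetilde\varphi)(x;\beta^0_1,\dots,\beta^0_n)$, differentiated $m$ times in $G$-directions $\alpha_1,\dots,\alpha_m$, becomes literally the $\beta^0_{n+1}$-difference quotient of the continuous function $(x,g)\mapsto ((D^\lambda)^{n+m}\varphi)(x,g;(\beta^0_1,\1),\dots,(\beta^0_n,\1),(\1,\alpha_1),\dots,(\1,\alpha_m))$.

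Next I would apply Lemma~\ref{L 4.26} to that continuous function $h\in\Cc(H\times\,\cdots,\Yc)$ — more precisely, view it as a function on $H$ with values in $\Cc(G\times\Lg^m(G),\Yc)$, or simply apply Lemma~\ref{L 4.26} pointwise in $(g,\alpha_1,\dots,\alpha_m)$ with the one-parameter subgroup $X=\beta^0_{n+1}$ — to write
$$h(x_0\beta^0_{n+1}(t),g;\dots)=h(x_0,g;\dots)+t\,(D^\lambda_1 h)(x_0,g;\dots)+t\,\chi(t,x_0,g;\dots)$$
with $\chi$ continuous and $\chi(0,\cdot)=0$. Dividing by $t$ shows that the difference quotient equals $(D^\lambda_1 h)(x_0,\cdot\,;\dots)+\chi(t,x_0,\cdot\,;\dots)$, whose pointwise limit as $t\to0$ is exactly $((D^\lambda_1)^{n+1}\varphi)(x_0,\cdot\,;\beta^0_1,\dots,\beta^0_n,\beta^0_{n+1})$ by Proposition~\ref{P 4.9} and the reordering of Lemma~\ref{L 4.19}. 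The entire issue is therefore upgraded from a pointwise statement to: the convergence $\chi(t,x_0,\cdot\,;\dots)\to0$ holds uniformly over $g\in K_2$, $\gamma\in K_1$, i.e.\ in the seminorm $p$.

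That uniformity — the main obstacle — is handled by Lemma~\ref{L 4.2}: the function $(t,g,\alpha_1,\dots,\alpha_m)\mapsto\chi(t,x_0,g;\alpha_1,\dots,\alpha_m)$ is continuous on $\RR\times G\times\Lg^m(G)$, vanishes at $t=0$, and $K_2\times K_1$ is compact in $G\times\Lg^m(G)$; hence $\sup_{g\in K_2,\gamma\in K_1}\vert\chi(t,x_0,g;\gamma)\vert\to0$ as $t\to0$, which is precisely $p$-convergence of the remainder term. I would also separately treat the degenerate seminorms $p_{K_1,K_2}$ with $K_1=\emptyset$ (the sup-norm over $K_2$ case), where one applies the same argument with $m=0$ and uses Lemma~\ref{L 4.26} directly on $\varphi$ composed with the appropriate partial derivative. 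Since $p$ was an arbitrary defining seminorm of $\Ec(G,\Yc)$, assembling these cases yields convergence in the topology of $\Ec(G,\Yc)$, which is the assertion. The only mild subtlety to be careful about is that Lemma~\ref{L 4.26} as stated produces $\chi$ depending continuously on all variables jointly; one must check that applying it with the extra parameters $(g,\alpha_1,\dots,\alpha_m)$ frozen and then letting them vary still gives a jointly continuous $\chi$ — this follows because the construction in \cite{NS12} is built from the original jointly continuous data, so no genuine new work is needed there.
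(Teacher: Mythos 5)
Your proposal is correct and follows essentially the same route as the paper: reduction to the seminorms $p_{K_1,K_2}$ with the two cases $K_1=\emptyset$ and $K_1\subseteq\Lg^s(G)$, the reordering via Lemma~\ref{L 4.19}(b), the Taylor-with-remainder decomposition of the difference quotient (Lemma~\ref{L 4.26} in the parameter-free case, and its parametrized version obtained from the integral formula plus Lemma~\ref{T 4.25} when the directions $\alpha_1,\dots,\alpha_s$ are present), and finally Lemma~\ref{L 4.2} to upgrade pointwise to uniform convergence on $K_2\times K_1$. The subtlety you flag about joint continuity of $\chi$ in the extra parameters is precisely the point where the paper switches from citing Lemma~\ref{L 4.26} to writing the remainder explicitly as $\int_0^1 B'(tz)\,\de z - B'(0)$ and invoking Lemma~\ref{T 4.25}, which is exactly the resolution you indicate.
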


\begin{proof}
Define $h\colon \RR\to \Ec(G,\Yc)$ by  
$$h(t)=
\begin{cases}
\frac
{((D^\lambda)^n \widetilde{\varphi})(x_0 \beta^0_{n+1}(t);\beta^0_1,\dotsc,\beta^0_n)
-((D^\lambda)^n \widetilde{\varphi})(x_0;\beta^0_1,\dotsc,\beta^0_n)}
{t} &\text{ if  }t\neq 0, \\  
((D^\lambda _1)^{n+1}\varphi)(x_0,\bullet\, ;\beta^0_1,\dotsc,\beta^0_n,\beta^0_{n+1}) 
&\text{ if }t=0.
\end{cases}$$
We must prove that $\lim\limits_{t\to 0}h(t)=h(0)$ in $\Ec(G,\Yc)$, 
that is, for every seminorm 
$p=p_{{K_1},{K_2}}$ (see Definition~\ref{distrib}) we have 
$\lim\limits_{t\to 0} p(h(t)-h(0))=0$.

Depending on the seminorm $p$, we distinguish two cases that can occur. 

Case 1: $p=p_{{K_1},{K_2}}$ for an arbitrary compact set $K_2\subseteq G$ and $K_1=\emptyset$.

We denote  $E(t)=p(h(t)-h(0)) $ and then we have 
$$\begin{aligned}
E(t)
=&\sup\{\lvert h(t)(g)-h(0)(g) \rvert \mid g\in K_2 \} \\ 
=&\sup\{\lvert \frac
{((D^\lambda_1)^n \varphi)(x_0 \beta^0_{n+1}(t),g;\beta^0_1,\dotsc,\beta^0_n) 
-((D^\lambda_1)^n \varphi)(x_0,g;\beta^0_1,\dotsc,\beta^0_n)}
{t} \\
&-((D^\lambda _1)^{n+1}\varphi)(x_0,g;\beta^0_1,\dotsc,\beta^0_n,\beta^0_{n+1})\rvert \mid g\in K_2 \} \\
=&\sup\{\lvert F(t,g)-F(0,g) \rvert \mid g\in K_2 \}
\end{aligned}$$
where 
$F\colon \RR\times G\to\Yc$ is defined by 
$$F(t,g)=
\begin{cases}
\frac
{((D^\lambda_1)^n \varphi)(x_0 \beta^0_{n+1}(t),g;\beta^0_1,\dotsc,\beta^0_n)
-((D^\lambda_1)^n \varphi)(x_0,g;\beta^0_1,\dotsc,\beta^0_n)}
{t}  &\text{ if }t\neq 0, \\ 
((D^\lambda _1)^{n+1}\varphi)(x_0,g;\beta^0_1,\dotsc,\beta^0_n,\beta^0_{n+1}) 
&\text{ if }t=0.
\end{cases}$$
The desired property $\lim\limits_{t\to 0}E(t)=0$ 
will follow by an application of Lemma \ref{L 4.2} for $X=\RR $, $T=G$,
$x_0=0\in \RR$, $K=K_2\subseteq G$ and 
$f=F\colon \RR\times G\to\Yc$, 
as soon as we will have checked that $F$ is a continuous function. 

To this end, first note that for arbitrary $g\in G$ we have  
$$\begin{aligned}
\lim\limits_{t\to 0}F(t,g)
&=\frac{\de}{\de t}\Big\vert_{t=0} 
((D^\lambda_1)^n\varphi)(x_0\beta^0_{n+1}(t),g;\beta^0_1,\dotsc,\beta^0_n) \\
&=((D^\lambda _1)^{n+1}\varphi)(x_0,g;\beta^0_1,\dotsc,\beta^0_n,\beta^0_{n+1})=F(0,g). 
\end{aligned}$$
Next, we will show that 
Lemma~\ref{L 4.26} applies for $H$ replaced by $H\times G$, $(x_0,g)\in H\times G$,
$X=(\beta^0_{n+1},\1)\in \Lg(H\times G)$, and 
$f\colon H\times G\to \Yc $, 
$f(x,y)=((D^\lambda _1)^n\varphi)(x,y;\beta^0_1,\dotsc,\beta^0_n) $, 
which is continuous since 
 $\varphi \in \Ci(H\times G,\Yc)$.
Note that the derivative 
$D^{\lambda}_X f\colon H\times G\to \Yc$ 
is given by 
$(D^{\lambda}_X f)(x,y)=
((D^\lambda _1)^{n+1}\varphi)(x,y;\beta^0_1,\dotsc,\beta^0_n,\beta^0_{n+1}) $, 
and this derivative is a continuous function  since $\varphi \in \Ci(H\times G,\Yc)$.

Therefore Lemma~\ref{L 4.26} applies and provides a continuous function 
$\chi\colon \RR \times G\to\Yc$ satisfying for arbitrary $g\in G$ the conditions 
$\chi(0,g)=0$ and 
$f(x_0 \beta^0_{n+1}(t),g)=f(x_0,g)+t(D^{\lambda}_X f)(x_0,g)+ t \chi(t,g)$.
We have 
$$\begin{aligned}
(D^{\lambda}_X f)(x_0,g)
&=(D^{\lambda}f)(x_0,g;\beta^0_{n+1},1)
=\frac{\de}{\de t}\Big\vert_{t=0} f(x_0 \beta^0_{n+1}(t),g) \\
&=\frac{\de}{\de t}\Big\vert_{t=0} ((D^\lambda_1)^n\varphi)(x_0\beta^0_{n+1}(t),g;\beta^0_1,\dotsc,\beta^0_n) \\
&=((D^\lambda _1)^{n+1}\varphi)(x_0,g;\beta^0_1,\dotsc,\beta^0_n,\beta^0_{n+1})\\
&=F(0,g). 
\end{aligned}$$
and $F(t,g)=F(0,g)+\chi(t,g)$, 
hence $F$ is the sum of two continuous functions, 
since $\chi$ is continuous by Lemma~\ref{L 4.26} and 
$g\mapsto F(0,g)=((D^\lambda _1)^{n+1}\varphi)(x_0,g;\beta^0_1,\dotsc,\beta^0_n,\beta^0_{n+1}) $ 
is a continuous function since $\varphi \in \Ci(H\times G,\Yc)$. 
Therefore $F$ itself is continuous, and this concludes the analysis of Case~1. 

Case 2: $p=p_{{K_1},{K_2}}$ for arbitrary compact sets $K_2\subseteq G$ and   
 $K_1\subseteq \Lg^s(G)$, where $s\ge 1$.
Denote again $E(t)=p(h(t)-h(0))$ for $t\in\RR$. 
Then we have 
$$E(t)=\sup\limits_{g,\alpha}\lvert((D^\lambda)^s(h(t))(g;\alpha_1,\dotsc,\alpha_s)-  
((D^\lambda)^s(h(0))(g;\alpha_1,\dotsc,\alpha_s) \rvert$$
where 
$g\in K_2$, $\alpha=(\alpha_1,\dotsc,\alpha_s)\in K_1$. 
It then follows that $E(t)$ is the supremum of the values of the seminorm $\vert\cdot\vert$ 
involved in the definition of $p=p_{K_1,K_2}$ 
(see Definition~\ref{distrib})
on the vectors in $\Yc$ 
of the form 
\allowdisplaybreaks
\begin{align}
\frac{1}{t}& \Bigl(((D^\lambda)^{n+s} \varphi)
(x_0 \beta^0_{n+1}(t),g;
(\beta^0_1,\1),\dotsc,(\beta^0_n,\1),(\1,\alpha_1),\dotsc,(\1,\alpha_s)) 
\nonumber \\
&-
((D^\lambda)^{n+s} \varphi)
(x_0,g;(\beta^0_1,\1),\dotsc,(\beta^0_n,\1),(\1,\alpha_1),\dotsc,(\1,\alpha_s))\Bigr) 
\nonumber \\
& - 
((D^\lambda)^{n+s+1} \varphi)
(x_0,g;(\beta^0_1,\1),\dotsc,(\beta^0_n,\1),(\beta^0_{n+1},\1),(1,\alpha_1),\dotsc,(\1,\alpha_s)) \nonumber
\end{align}
where again 
$g\in K_2$ and $\alpha=(\alpha_1,\dotsc,\alpha_s)\in K_1$. 

Therefore 
$E(t)=\sup\{\lvert F(t,g,\alpha)- F(0,g,\alpha) \rvert \mid 
g\in K_2,\alpha \in K_1 \}$
where  the function 
$F\colon \RR\times G\times \Lg^s(G)\to \Yc$  is given by 
$$
F(t,g,\alpha)
=
\begin{cases}
\begin{aligned}
& 
\frac{1}{t} \Bigl(((D^\lambda)^{n+s} \varphi)
(x_0 \beta^0_{n+1}(t),g;
(\beta^0_1,1),\dotsc,(\beta^0_n,\1),(\1,\alpha_1),\dotsc,(\1,\alpha_s)) 
\nonumber \\
&-
((D^\lambda)^{n+s} \varphi)
(x_0,g;(\beta^0_1,\1),\dotsc,(\beta^0_n,\1),(\1,\alpha_1),\dotsc,(\1,\alpha_s))\Bigr), 
\end{aligned}
& \\
\hfill \text{ if }t\ne0, & \\
((D^\lambda)^{n+s+1} \varphi)
(x_0,g;(\beta^0_1,\1),\dotsc,(\beta^0_n,\1),(\beta^0_{n+1},\1),(\1,\alpha_1),\dotsc,(\1,\alpha_s)), 
& \\
\hfill \text{ if }t=0. & \\ 
\end{cases}
$$
The desired property $\lim\limits_{t\to 0}E(t)=0$ then follows by an 
application of Lemma \ref{L 4.2} for 
$X=\RR $, $T=G\times \Lg^s(G)$, $x_0=0\in \RR $,
the compact $K=K_2\times K_1\subseteq G\times \Lg^s(G)$ and the function $f=F$, 
as soon as we will have proved that $F$ is a continuous function. 

Just as in Case~1, we first note that 
$$\begin{aligned}
\lim\limits_{t\to 0}F(t,g,\alpha)
&=\frac{\de}{\de t}\Big\vert_{t=0} ((D^\lambda)^{n+s} \varphi)
(x_0 \beta^0_{n+1}(t),g;
(\beta^0_1,\1),\dotsc,(\beta^0_n,\1),(\1,\alpha_1),\dotsc,(\1,\alpha_s)) \\
&=((D^\lambda)^{n+s+1} \varphi)
(x_0,g;(\beta^0_1,\1),\dotsc,(\beta^0_n,\1),(\1,\alpha_1),\dotsc,(\1,\alpha_s),(\beta^0_{n+1},\1)) \\
&=F(0,g,\alpha) 
\end{aligned}$$ 
by using Lemma~\ref{L 4.19}(b).

Now let 
$$B\colon \RR\to\Yc,\quad  
B(t)=((D^\lambda)^{n+s} \varphi)
(x_0 \beta^0_{n+1}(t),g;
(\beta^0_1,\1),\dotsc,(\beta^0_n,\1),(\1,\alpha_1),\dotsc,(\1,\alpha_s)) .$$
Since $\varphi \in \Ci(H\times G,\Yc)$, we have 
$B\in \Cc^1(\RR,\Yc)$, 
$B'(0)=F(0,g,\alpha)$ (Lemma \ref{L 4.19}(b)) and 
 $B'(t)=((D^\lambda)^{n+s+1} \varphi)
(x_0 \beta^0_{n+1}(t),g;
(\beta^0_1,\1),\dotsc,(\beta^0_n,\1),(\1,\alpha_1),\dotsc,(\1,\alpha_s),(\beta^0_{n+1},\1))  $.

We have by the fundamental theorem of calculus for functions with values in the space~$\Yc$ which may not be complete 
(see \cite[Th. 1.5]{Gl02a}) 
$$B(t)=B(0)+t\int\limits_0^1 B'(tz)\de z
=B(0)+tB'(0)+t\int\limits_0^1 B'(tz)\de z-tB'(0)$$
and therefore 
\begin{equation}\label{P 4.21_proof_eq1}
F(t,g,\alpha)=F(0,g,\alpha)+ \chi(g,t,\alpha)
\end{equation}
where
$\chi\colon G\times\RR\times\Lg^s(G)\to\Yc$ 
is given by 
$$\begin{aligned}
\chi(g,& t,\alpha) \\
=& \int\limits_0^1 
((D^\lambda)^{n+s+1}\varphi)(x_0 \beta^0_{n+1}(tz),g;
(\beta^0_1,\1),\dotsc,(\beta^0_n,\1),(\1,\alpha_1),\dotsc,(\1,\alpha_s),(\beta^0_{n+1},\1))\de z \\
&-((D^\lambda)^{n+s+1}\varphi)(x_0,g;
(\beta^0_1,\1),\dotsc,(\beta^0_n,\1),(\1,\alpha_1),\dotsc,(\1,\alpha_s),(\beta^0_{n+1},\1)).
\end{aligned}$$
We have $\chi(g,0,\alpha)=0 $ and $\chi$ is continuous by Lemma~\ref{T 4.25} 
applied for 
$X=G\times\RR \times\Lg^s(G)$ and 
$f\colon G\times\RR\times\Lg^s(G)\times [0,1]\to\Yc$ 
given by 
$$\begin{aligned}
f(g,t,& \alpha,z) \\
=&((D^\lambda)^{n+s+1}\varphi)(x_0 \beta^0_{n+1}(tz),g;
(\beta^0_1,\1),\dotsc,(\beta^0_n,\1),(\1,\alpha_1),\dotsc,(\1,\alpha_s),(\beta^0_{n+1},\1))
\end{aligned} $$
which is continuous since $\varphi \in \Ci(H\times G,\Yc)$.

Finally, by using the above equality~\eqref{P 4.21_proof_eq1}, 
we again obtain that $F$ is the sum of two continuous functions 
hence is itself continuous, and this completes the proof. 
\end{proof}

\begin{lemma}\label{L 4.13}
If $\varphi\in\Ci(H\times G,\Yc)$, then the following assertions hold. 
\begin{itemize}
\item[(a)] Let $x\in H$ and $\beta_1,\dotsc,\beta_n \in \Lg(H)$ be fixed.
Then the function
$$h:=((D^\lambda _1)^n\varphi)(x,\bullet\, ;\beta_1,\dots,\beta_n)\colon G\to \Yc$$
belongs to $\Ci(G,\Yc)$ and for all $s\ge 1$ and $\alpha_1,\dots,\alpha_s\in\Lg(G)$ we have
$$((D^\lambda)^s h)(g;\alpha_1,\dots,\alpha_s)
=((D^\lambda)^{n+s} \varphi)
(x,g;(\beta_1,\1),\dotsc,(\beta_n,\1),(\1,\alpha_1),\dots,(\1,\alpha_s)).$$

\item[(b)] Let $x\in H$, $\beta_1,\dotsc,\beta_n \in \Lg(H)$, and $\gamma_1,\dots,\gamma_n \in \Lg(G)$ be fixed. 
Then the function 
$$h:=((D^\lambda)^n\varphi)(x,\bullet\,;(\beta_1,\gamma_1),\dots,(\beta_n,\gamma_n))\colon G\to\Yc$$
is in $\Ci(G,\Yc)$ and for every $s\ge1$ and $\alpha_1,\dots,\alpha_s\in\Lg(G)$ we have
$$((D^\lambda)^s h)(g;\alpha_1,\dots,\alpha_s)
=((D^\lambda)^{n+s} \varphi)
(x,g;(\beta_1,\gamma_1),\dots,(\beta_n,\gamma_n),(\1,\alpha_1),\dots,(\1,\alpha_s)).$$
\end{itemize}
\end{lemma}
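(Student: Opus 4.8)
The plan is to prove parts (a) and (b) of Lemma~\ref{L 4.13} together, since (a) is just the special case of (b) where all the $\gamma_i$ equal $\1\in\Lg(G)$, in view of Proposition~\ref{P 4.9} and Lemma~\ref{L 4.8}(a) which identify $((D^\lambda_1)^n\varphi)(x,\bullet\,;\beta_1,\dots,\beta_n)$ with $((D^\lambda)^n\varphi)(x,\bullet\,;(\beta_1,\1),\dots,(\beta_n,\1))$. So I will concentrate on (b). Fix $x\in H$, $\beta_1,\dots,\beta_n\in\Lg(H)$, $\gamma_1,\dots,\gamma_n\in\Lg(G)$, and set $h:=((D^\lambda)^n\varphi)(x,\bullet\,;(\beta_1,\gamma_1),\dots,(\beta_n,\gamma_n))\colon G\to\Yc$. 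The goal is to show $h\in\Ci(G,\Yc)$ together with the asserted formula for its iterated derivatives.

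The key computation is the following. For $s\ge 1$ and $\alpha_1,\dots,\alpha_s\in\Lg(G)$, one wants to compute $(D^\lambda_{\alpha_s}\cdots D^\lambda_{\alpha_1}h)(g)$. By the definition of $h$ and the chain of limits defining $(D^\lambda)^n\varphi$, differentiating $h$ along $\alpha_j\in\Lg(G)\subseteq\Lg(H\times G)$ (identified with $(\1,\alpha_j)$) amounts to appending $(\1,\alpha_j)$ to the argument list of $(D^\lambda)^{n+j}\varphi$ in the last slot, \emph{provided} we already know this is legitimate, i.e. that the relevant partial derivatives of $\varphi$ in the last variable exist and are continuous. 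Here the relevant fact is that $\varphi\in\Ci(H\times G,\Yc)$ guarantees all the iterated derivatives $((D^\lambda)^m\varphi)$ exist and are continuous as functions on $(H\times G)\times\Lg^m(H\times G)$, and in particular their restrictions to the subproduct where the first $n$ arguments are the fixed $(\beta_i,\gamma_i)$ and the remaining ones run over $(\1,\alpha_j)\in\Lg(H\times G)$ are continuous. Thus the formula
$$((D^\lambda)^s h)(g;\alpha_1,\dots,\alpha_s)
=((D^\lambda)^{n+s}\varphi)(x,g;(\beta_1,\gamma_1),\dots,(\beta_n,\gamma_n),(\1,\alpha_1),\dots,(\1,\alpha_s))$$
should follow by a straightforward induction on $s$: the base case $s=1$ unwinds the defining limit of $D^\lambda_{\alpha_1}h$ and recognizes it as the $(n+1)$-st derivative of $\varphi$ with $(\1,\alpha_1)$ appended; the inductive step appends $(\1,\alpha_{s+1})$ using the same mechanism applied to $((D^\lambda)^{n+s}\varphi)$ in place of $\varphi$, invoking Definition~\ref{def 4.3} / the inductive clause of Definition~\ref{aux4}. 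Continuity of the right-hand side in $(g;\alpha_1,\dots,\alpha_s)$, for fixed $x$ and the fixed $(\beta_i,\gamma_i)$, is immediate from the continuity of $((D^\lambda)^{n+s}\varphi)$ on all of $(H\times G)\times\Lg^{n+s}(H\times G)$; this is exactly what is needed to conclude $h\in\Cc^s(G,\Yc)$, and since $s$ is arbitrary, $h\in\Ci(G,\Yc)$.

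I expect the only subtle point to be the bookkeeping in the base case: one must check carefully that, for $g\in G$, $\alpha\in\Lg(G)$,
$$(D^\lambda_\alpha h)(g)=\lim_{t\to0}\frac{h(g\alpha(t))-h(g)}{t}
=\lim_{t\to0}\frac{((D^\lambda)^n\varphi)(x,g\alpha(t);(\beta_1,\gamma_1),\dots,(\beta_n,\gamma_n))-((D^\lambda)^n\varphi)(x,g;(\beta_1,\gamma_1),\dots,(\beta_n,\gamma_n))}{t}$$
equals $((D^\lambda)^{n+1}\varphi)(x,g;(\beta_1,\gamma_1),\dots,(\beta_n,\gamma_n),(\1,\alpha))$, where in $\Lg(H\times G)$ the element $(\1,\alpha)$ acts on the point $(x,g)$ by $(x,g)\cdot(\1,\alpha)(t)=(x,g\alpha(t))$; this is literally the definition of the derivative of the function $((D^\lambda)^n\varphi)(\bullet\,;(\beta_1,\gamma_1),\dots,(\beta_n,\gamma_n))$ along $(\1,\alpha)$, which exists and is continuous because $\varphi\in\Ci(H\times G,\Yc)$ puts $((D^\lambda)^n\varphi)$ in $\Cc^1\bigl((H\times G)\times\Lg^n(H\times G),\Yc\bigr)$, hence in particular its restriction with the last $n$ slots frozen is $\Cc^1$ on $H\times G$. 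Once this base case is set up cleanly, the induction is mechanical and parallels Proposition~\ref{P 4.9}. Part (a) then drops out by specializing $\gamma_1=\cdots=\gamma_n=\1$ and using Lemma~\ref{L 4.8}(a).
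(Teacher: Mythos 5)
Your proposal is correct and follows essentially the same route as the paper: deduce (a) from (b) by specializing $\gamma_1=\cdots=\gamma_n=\1$ via Lemma~\ref{L 4.8}(a), then prove (b) by induction on $s$, identifying the derivative of $h$ along $\alpha\in\Lg(G)$ with the derivative of $(D^\lambda)^n\varphi$ along $(\1,\alpha)\in\Lg(H\times G)$ and reading off continuity from $\varphi\in\Ci(H\times G,\Yc)$. No gaps.
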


\begin{proof}
Assertion~(a) follows by Assertion~(b) for
$\gamma_1=\cdots=\gamma_n=\1 \in \Lg(G)$, by using Lemma~\ref{L 4.8}(a).

Assertion~(b) will be proved by induction on $s\ge 1$.
Since $\varphi \in \Ci(H\times G,\Yc)$ and $h(g)=((D^\lambda)^n\varphi)(x,g;(\beta_1,\gamma_1),\dots,(\beta_n,\gamma_n))$
it follows that the function $h$ is 
continuous.

The case $s=1$: 
We have 
$$\begin{aligned}
(D^\lambda h)(g;\alpha)
&=\frac{\de}{\de t}\Big\vert_{t=0}h(g \alpha(t)) \\
&=\frac{\de}{\de t}\Big\vert_{t=0}
((D^\lambda)^n\varphi)(x,g\alpha(t);(\beta_1,\gamma_1),\dots,(\beta_n,\gamma_n)) \\
&=((D^\lambda)^{n+1}\varphi)(x,g;(\beta_1,\gamma_1),\dots,(\beta_n,\gamma_n),(\1,\alpha)) 
\end{aligned}$$
and then since  $\varphi \in \Ci(H\times G,\Yc)$ we obtain $h\in \Cc^1(G,\Yc)$.

Now suppose the assertion was proved for $s$ and we will prove it 
for $s+1$.
We have 
$$\begin{aligned}
((D^\lambda)^{s+1}h)&(g;\alpha_1,\dotsc,\alpha_s,\alpha_{s+1}) \\
&=
\frac{\de}{\de t}\Big\vert_{t=0}((D^\lambda)^s h)(g \alpha_{s+1}(t);\alpha_1,\dotsc,\alpha_s) \\
&=\frac{\de}{\de t}\Big\vert_{t=0}((D^\lambda)^{n+s} \varphi)
(x,g\alpha_{s+1}(t);(\beta_1,\gamma_1),\dots,(\beta_n,\gamma_n),
(\1,\alpha_1),\dotsc,(\1,\alpha_s)) \\
&=((D^\lambda)^{n+s+1} \varphi)(x,g;(\beta_1,\gamma_1),\dots,(\beta_n,\gamma_n),
(\1,\alpha_1),\dotsc,(\1,\alpha_s),(\1,\alpha_{s+1}))
\end{aligned}$$
and the proof by induction ends.

Moreover, since $\varphi \in \Ci(H\times G,\Yc)$, we obtain  $h\in \Cc^s(G,\Yc)$ for every $s\ge 1 $. 
This shows that $h\in \Ci(G,\Yc)$, and the proof is complete.
\end{proof}

\begin{theorem}\label{P 4.15}
Let $G$ and $H$ be any topological groups and $\Yc$ be any locally convex space. 
Then for arbitrary $\varphi\in\Ci(H\times G,\Yc)$, the corresponding function 
$$\widetilde{\varphi}\colon H\to \Ci(G,\Yc), \quad\widetilde{\varphi}(x)(g):=\varphi(x,g).$$
belongs to $\Ci(H,\Ec(G,\Yc))$. 
Moreover, the map 
$$\Ec(H\times G,\Yc)\to \Ec(H,\Ec(G,\Yc)),\quad \varphi\mapsto\widetilde{\varphi}$$
is an isomorphism of locally convex spaces. 
\end{theorem}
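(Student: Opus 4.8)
The plan is to prove the theorem in three stages, building on the preliminary results already established. First, I would verify that $\widetilde{\varphi}$ actually lands in $\Ci(H,\Ec(G,\Yc))$: by Remark~\ref{obs 4.12} (via Lemma~\ref{L 4.11}) each $\widetilde{\varphi}(x)$ lies in $\Ci(G,\Yc)=\Ec(G,\Yc)$, so $\widetilde{\varphi}$ is a well-defined map into $\Ec(G,\Yc)$, and by Proposition~\ref{P 4.14} it is continuous. For higher smoothness, I would argue by induction on $n$ that $\widetilde{\varphi}\in\Cc^n(H,\Ec(G,\Yc))$ with the derivative identified by Proposition~\ref{P 4.9}: the point values of $((D^\lambda)^n\widetilde{\varphi})(x;\beta_1,\dots,\beta_n)$ are given by $((D^\lambda_1)^n\varphi)(x,\cdot\,;\beta_1,\dots,\beta_n)$, which by Lemma~\ref{L 4.13}(a) is an element of $\Ci(G,\Yc)$; that the difference quotients converge to this element \emph{in the topology of $\Ec(G,\Yc)$} is precisely the content of Proposition~\ref{P 4.27}. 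Finally, continuity of the map $(x;\beta_1,\dots,\beta_n)\mapsto((D^\lambda_1)^n\varphi)(x,\cdot\,;\beta_1,\dots,\beta_n)$ from $H\times\Lg^n(H)$ into $\Ec(G,\Yc)$ follows by the same argument as Proposition~\ref{P 4.14}, applied to the continuous function $(D^\lambda_1)^n\varphi$ on $H\times\Lg^n(H)\times G\times\Lg^s(G)$ in place of $\varphi$. This closes the induction and shows $\widetilde{\varphi}\in\Ci(H,\Ec(G,\Yc))$.

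The second stage is to establish that the linear map $\Phi\colon\varphi\mapsto\widetilde{\varphi}$ is a bijection. Injectivity is immediate since $\widetilde{\varphi}=0$ forces $\varphi(x,g)=\widetilde{\varphi}(x)(g)=0$ for all $x,g$. For surjectivity, given $\psi\in\Ci(H,\Ec(G,\Yc))$, define $\varphi(x,g):=\psi(x)(g)$; one must check $\varphi\in\Ci(H\times G,\Yc)$. Here I would prove by induction on $n$ that all mixed derivatives $((D^\lambda)^n\varphi)(x,g;(\beta_1,\alpha_1),\dots,(\beta_n,\alpha_n))$ exist and are continuous. The key tool is that the ``$G$-directions'' and ``$H$-directions'' can be separated using Lemma~\ref{L 4.18} and Remark~\ref{obs 4.17} (since $(\beta_i,\1)$ and $(\1,\alpha_j)$ commute in $\Lg(H\times G)$), reducing a general mixed derivative to one where all $H$-derivatives are taken first and then all $G$-derivatives; the $H$-derivatives come from the smoothness of $\psi$ as an $\Ec(G,\Yc)$-valued map, the $G$-derivatives from evaluating the resulting $\Ec(G,\Yc)$-valued smooth functions — whose derivatives are continuous jointly in all arguments precisely because convergence in $\Ec(G,\Yc)$ controls the seminorms $p_{K_1,K_2}$. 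Continuity of the top derivative as a map on $H\times G\times\Lg^n(H\times G)$ then follows by unwinding: uniform control over compact sets in the $\Lg^n(G)$-directions is built into the $\Ec$-topology, and Lemma~\ref{L 4.2} handles the remaining parameters.

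The third stage is bicontinuity, i.e.\ that $\Phi$ and $\Phi^{-1}$ are continuous for the respective locally convex topologies. A seminorm on $\Ec(H,\Ec(G,\Yc))$ has the form $f\mapsto\sup\{q_{K_1,K_2}((D^\lambda)^k f(\bullet)(x;\gamma))\}$ for compact $K_1\subseteq\Lg^k(H)$, $x\in K_2\subseteq H$, and $q=p_{L_1,L_2}$ a seminorm on $\Ec(G,\Yc)$ with $L_1\subseteq\Lg^s(G)$, $L_2\subseteq G$ compact. By Lemma~\ref{L 4.13}(a) together with Lemma~\ref{L 4.8}, such a seminorm evaluated at $\widetilde{\varphi}$ equals the supremum of $|((D^\lambda)^{k+s}\varphi)(x,g;(\beta_1,\1),\dots,(\beta_k,\1),(\1,\alpha_1),\dots,(\1,\alpha_s))|$ over the relevant compacta, which is bounded by $p_{K,K_2\times L_2}(\varphi)$ for a suitable compact $K\subseteq\Lg^{k+s}(H\times G)$ built from $K_1\times L_1$ via the natural embeddings $\Lg(H)\hookrightarrow\Lg(H\times G)\hookleftarrow\Lg(G)$. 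This gives continuity of $\Phi$. For the converse, a seminorm $p_{K,K_2'}$ on $\Ec(H\times G,\Yc)$ with $K\subseteq\Lg^m(H\times G)$ compact needs to be dominated; since $\Lg(H\times G)\cong\Lg(H)\times\Lg(G)$ (the projections $\Lg(H\times G)\to\Lg(H)$, $\Lg(H\times G)\to\Lg(G)$ are continuous), one projects $K$ to compacta in $\Lg^m(H)$ and $\Lg^m(G)$, expands each mixed derivative into a finite sum of separated derivatives using Lemma~\ref{L 4.18}, and bounds each summand by a seminorm of $\widetilde{\varphi}$. I expect this last bicontinuity step — in particular the bookkeeping for the converse direction, reindexing the commutation-sorted mixed derivatives and matching compact sets through the identification $\Lg(H\times G)\cong\Lg(H)\times\Lg(G)$ — to be the main obstacle, as everything else is essentially assembled from Propositions~\ref{P 4.9}, \ref{P 4.14}, \ref{P 4.27} and Lemma~\ref{L 4.13}, whereas here one must be careful that the finitely many summands produced by Lemma~\ref{L 4.18} are each of a form that a \emph{single} target seminorm controls, possibly passing to a maximum of finitely many seminorms on $\Ec(H,\Ec(G,\Yc))$.
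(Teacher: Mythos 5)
Your first stage reproduces the paper's argument for the main assertion essentially verbatim: continuity of $\widetilde{\varphi}$ from Proposition~\ref{P 4.14}, existence of the derivatives from Proposition~\ref{P 4.27}, the fact that they take values in $\Ec(G,\Yc)$ from Lemma~\ref{L 4.13}(a), and continuity of $(D^\lambda)^n\widetilde{\varphi}$ by rerunning the two-case argument of Proposition~\ref{P 4.14} with Lemma~\ref{L 4.2} applied to $(D^\lambda_1)^n\varphi$. That part is correct and matches the paper. Your forward continuity estimate in the third stage (bounding the seminorms of $\widetilde{\varphi}$ through Lemma~\ref{L 4.13}(a) and Lemma~\ref{L 4.8} by a seminorm $p_{K,K_2\times L_2}(\varphi)$ for a compact $K\subseteq\Lg^{k+s}(H\times G)$) is also sound and is what the paper has in mind.

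The genuine gap is in your second stage (surjectivity of $\varphi\mapsto\widetilde{\varphi}$), and it propagates into the converse half of your third stage. To show that $\varphi(x,g):=\psi(x)(g)$ lies in $\Ci(H\times G,\Yc)$ you must produce derivatives of $\varphi$ along \emph{arbitrary} one-parameter subgroups $(\beta_1,\alpha_1),\dots,(\beta_n,\alpha_n)$ of $H\times G$, not only along the separated directions $(\beta,\1)$ and $(\1,\alpha)$. Your proposed tool, Lemma~\ref{L 4.18}, cannot do this: it takes a function \emph{already known} to be of class $\Cc^n$ and permutes two adjacent commuting directions inside an existing $n$-th derivative; it neither asserts that existence and continuity of the separated iterated partial derivatives forces $\varphi\in\Cc^n(H\times G,\Yc)$, nor does it expand $D^\lambda_{(\beta,\alpha)}$ as $D^\lambda_{(\beta,\1)}+D^\lambda_{(\1,\alpha)}$ (the same misattribution occurs where you ``expand each mixed derivative into a finite sum of separated derivatives using Lemma~\ref{L 4.18}'' in the converse continuity estimate). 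The missing ingredient is exactly the product-group smoothness criterion that the paper invokes at this point, namely \cite[Prop.\ 1.2.2.3]{BCR81} (together with \cite[Prop.\ 1.2.1.5]{BCR81} for the seminorm comparison in the inverse direction). Either cite that result, or prove from scratch both the additivity $D^\lambda_{(\beta,\alpha)}\varphi=D^\lambda_{(\beta,\1)}\varphi+D^\lambda_{(\1,\alpha)}\varphi$ for $\Cc^1$ functions and the converse partial-derivative criterion; as written, the induction in your second stage does not close.
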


\begin{proof}
To prove the first assertion, let $\varphi\in\Ci(H\times G,\Yc)$ arbitrary. 
The fact that $\widetilde{\varphi}$ is continuous follows by Proposition~\ref{P 4.14}.
We will show that for every $n\ge 1$ the derivative 
$(D^\lambda)^n\widetilde{\varphi} \colon H\times\Lg^n(H)\to \Ec(G,\Yc)$
exists and is continuous.
The existence of that derivative actually follows from Proposition~\ref{P 4.27}. 
The fact that the derivative takes values in $\Ec(G,\Yc)$ is a consequence of Lemma~\ref{L 4.13}(a).

For continuity of the above derivative we will prove that for every
seminorm $p=p_{{K_1},{K_2}}$ on $\Ec(G,\Yc)$ as in Definition~\ref{distrib} and every 
$x_0\in H$, $\beta^0_1,\dotsc,\beta^0_n \in \Lg(H)$ 
and arbitrary $\varepsilon>0$ there exists a neighborhood $U$ of 
$(x;\beta^0_1,\dotsc,\beta^0_n)\in H\times\Lg^n(H)$ 
for which for every 
$(x;\beta_1,\dotsc,\beta_n)\in U$ we have 
$p(((D^\lambda)^n \widetilde{\varphi})(x;\beta_1,\dotsc,\beta_n)-
((D^\lambda)^n \widetilde{\varphi})(x_0;\beta^0_1,\dotsc,\beta^0_n))\le \varepsilon.$

Case (a): $p=p_{{K_1},{K_2}}$, where the compact $K_2\subseteq G$ is arbitrary and $K_1=\emptyset$.

As in the proof of Proposition \ref{P 4.14}, we denote 
$$E(x;\beta_1,\dotsc,\beta_n):=\sup_{g\in K_2}
\lvert((D^\lambda)^n \widetilde{\varphi})(x;\beta_1,\dotsc,\beta_n)(g)-
((D^\lambda)^n \widetilde{\varphi})(x_0;\beta^0_1,\dotsc,\beta^0_n)(g)\rvert.$$
Applying Proposition \ref{P 4.9} we obtain 
$$E(x;\beta_1,\dotsc,\beta_n)=\sup_{g\in K_2}
\lvert((D^\lambda_1)^n\varphi)(x,g;\beta_1,\dotsc,\beta_n)-
((D^\lambda_1)^n \varphi)(x_0,g;\beta^0_1,\dotsc,\beta^0_n)\rvert.$$
Now the conclusion follows by applying Lemma \ref{L 4.2} for
$(x_0;\beta^0_1,\dotsc,\beta^0_n)\in H\times\Lg^n(H)=X$, 
$K=K_2$ compact in 
$T=G$ and 
$$f\colon  H\times\Lg^n(H)\times G\to\Yc,\quad  
f(x;\beta_1,\dotsc,\beta_n,g)=((D^\lambda_1)^n \varphi)(x,g;\beta_1,\dotsc,\beta_n),$$ 
which is a continuous function since 
$(D^\lambda_1)^n \varphi\colon H\times G\times\Lg^n(H)\to\Yc$ is
continuous by Lemma~\ref{L 4.8}(c). 

Case (b):  $p=p_{{K_1},{K_2}}$ with arbitrary compact sets $K_2\subseteq G$  and  
 $K_1\subseteq \Lg^s(G)$, where $s\ge 1$.

We denote 
$$\begin{aligned}
E(x;\beta_1,\dotsc,\beta_n)
=\sup
\{
& \lvert((D^\lambda)^s((D^\lambda)^n\widetilde{\varphi})(x;\beta_1,\dotsc,\beta_n)) (g;\gamma_1,\dotsc,\gamma_s) \\
& -((D^\lambda)^s((D^\lambda)^n\widetilde{\varphi})(x_0;\beta^0_1,\dotsc,\beta^0_n)) (g;\gamma_1,\dotsc,\gamma_s) \rvert \\
&\quad \mid g\in K_2,\gamma=(\gamma_1,\dotsc,\gamma_s) \in K_1 \}. 
\end{aligned}$$
By Proposition \ref{P 4.9} we obtain 
$$\begin{aligned}
E(x;\beta_1,\dotsc,\beta_n)
=\sup\{
& \lvert((D^\lambda)^s((D^\lambda_1)^n\varphi)(x,\bullet\,;\beta_1,\dotsc,\beta_n)) (g;\gamma_1,\dotsc,\gamma_s) \\
& -((D^\lambda)^s((D^\lambda_1)^n\varphi)(x_0,\bullet\,;\beta^0_1,\dotsc,\beta^0_n)) (g;\gamma_1,\dotsc,\gamma_s) \rvert \\
&\quad \mid g\in K_2,\gamma=(\gamma_1,\dotsc,\gamma_s) \in K_1 \}. 
\end{aligned}$$
Furthermore, by Lemma \ref{L 4.13}(a) we have 
$$\begin{aligned}
E(x;\beta_1,\dotsc,\beta_n)
=\sup\{
&\lvert((D^\lambda)^{n+s}\varphi)(x,g;(\beta_1,\1),\dotsc,(\beta_n,\1),
(\1,\gamma_1),\dotsc,(\1,\gamma_s)) \\
&-((D^\lambda)^{n+s}\varphi)(x_0,g;(\beta^0_1,\1),\dotsc,(\beta^0_n,\1),
(\1,\gamma_1),\dotsc,(\1,\gamma_s)) \rvert \\ 
&\quad\mid g\in K_2,\gamma=(\gamma_1,\dotsc,\gamma_s) \in K_1 \}. 
\end{aligned}$$
The conclusion now follows by using Lemma~\ref{L 4.2} for 
$(x_0;\beta^0_1,\dotsc,\beta^0_n)\in H\times\Lg^n(H)=X$, 
$T=G\times\Lg^s(G)$,
$K=K_2\times K_1$,  and 
$f\colon  H\times\Lg^n(H)\times G\times\Lg^s(G)\to \Yc$ given by
$$f(x,\beta_1,\dotsc,\beta_n,g,\gamma_1,\dotsc,\gamma_s)
=((D^\lambda)^{n+s}\varphi)(x,g;(\beta_1,\1),\dotsc,(\beta_n,\1),
(\1,\gamma_1),\dotsc,(\1,\gamma_s)). $$ 
Note that $f$ is continuous since 
$(D^\lambda)^{n+s}\varphi $ is continuous as a consequence of the hypothesis $\varphi \in \Ci(H\times G,\Yc)$,
and this concludes the proof of the fact that $\widetilde{\varphi}\in\Ec(H,\Ec(G,\Yc))$. 

For the second assertion, 
note that the inverse map 
$$\Ec(H,\Ec(G,\Yc))\to \Ec(H\times G,\Yc),\quad \widetilde{\varphi}\mapsto\varphi$$
is well defined, since if $\widetilde{\varphi}\in\Ec(H,\Ec(G,\Yc))$ then $\varphi\in\Ec(H\times G,\Yc)$ 
as an easy consequence of \cite[Prop. 1.2.2.3]{BCR81}. 
Moreover, the continuity of both maps $\varphi\mapsto\widetilde{\varphi}$ and $\widetilde{\varphi}\mapsto\varphi$ 
follows easily by taking into account the relations between the derivatives of $\varphi$ and $\widetilde{\varphi}$ 
provided by Proposition~\ref{P 4.9} and Lemma~\ref{L 4.8} (see also \cite[Prop. 1.2.1.5]{BCR81}). 
This completes the proof. 
\end{proof}

\begin{remark}\label{law}
\normalfont
It is easily seen that the proof of Theorem~\ref{P 4.15} 
has a local character, in the sense that it actually leads to a more general result, 
which can be stated as follows: 

Let $G$ and $H$ be any topological groups and $\Yc$ be any locally convex space. 
Pick any open sets $V\subseteq G$ and $W\subseteq H$. 
Then for arbitrary $\varphi\in\Ci(W\times V,\Yc)$, the corresponding function
the function 
$\widetilde{\varphi}\colon W\to \Ci(V,\Yc)$, $\widetilde{\varphi}(x)(g):=\varphi(x,g)$, 
belongs to $\Ci(W,\Ec(V,\Yc))$. 
Moreover, the map 
$$\Ec(W\times V,\Yc)\to \Ec(W,\Ec(V,\Yc)),\quad \varphi\mapsto\widetilde{\varphi}$$
is an isomorphism of locally convex spaces. 
\end{remark}

\section{Structure of invariant local operators}
\label{Sect5}

In this final section we establish the structure of invariant local operators on any topological group $G$ 
(Theorem~\ref{P 4.29}) and we use that result  
in order to compare to some extent the two candidates $\Uc_0(G)\subseteq\Uc(G)$ 
to the role of universal enveloping algebra of~$G$; cf.~Problem~\ref{probl1}. 
Our main result in this connection is contained in Corollary~\ref{density} below. 

\subsection*{General results}
\begin{proposition}\label{P 4.28}
If $G$ is any topological group,  
then for every 
$f\in \Ec(G) $ and  $u\in \Ec'(G) $ we have $f*u\in \Ec(G) $.
\end{proposition}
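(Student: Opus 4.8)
The plan is to reduce the convolution $f * u$ to a composition of maps that have already been shown to preserve smoothness, and to exploit the exponential law (Theorem~\ref{P 4.15}) to handle the dependence on the outer variable. Recall that by Definition~\ref{conv_def} we have $(f * u)(x) = \check u(f \circ L_x)$ for all $x \in G$. First I would introduce the function $\Phi \colon G \times G \to \CC$, $\Phi(x,y) := f(xy) = (f \circ m)(x,y)$, where $m$ is the multiplication of $G$. By Proposition~\ref{multi}, $\Phi = f \circ m \in \Ec(G \times G)$. Note that $\Phi(x, \cdot) = f \circ L_x$, so the smoothness of $\Phi$ in its second variable recovers (via Remark~\ref{obs 4.12}) the fact that $f \circ L_x \in \Ci(G)$ used implicitly in Definition~\ref{conv_def}.

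Next I would apply the exponential law. By Theorem~\ref{P 4.15} (with $H = G$ and $\Yc = \CC$), the associated function $\widetilde{\Phi} \colon G \to \Ec(G)$, $\widetilde{\Phi}(x) = \Phi(x, \cdot) = f \circ L_x$, belongs to $\Ci(G, \Ec(G))$; in particular it is a smooth $\Ec(G)$-valued function on $G$. Now $u \in \Ec'(G)$ is by definition a continuous linear functional on $\Ec(G)$, and hence so is $\check u$ by Definition~\ref{conv_def} (its continuity being immediate from that of $u$ together with Proposition~\ref{check_lemma}). The key observation is then that $f * u = \check u \circ \widetilde{\Phi}$, i.e. the composition of the smooth map $\widetilde{\Phi} \colon G \to \Ec(G)$ with the continuous linear map $\check u \colon \Ec(G) \to \CC$.

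It remains to argue that post-composing a smooth $\Ec(G)$-valued function with a continuous linear functional yields a smooth scalar function; this is where I would need to be slightly careful, since smoothness here is smoothness along one-parameter subgroups in the sense of Definition~\ref{aux4}. The point is that for $\gamma \in \Lg(G)$, the difference quotient $\frac{1}{t}\big((f*u)(g\gamma(t)) - (f*u)(g)\big)$ equals $\check u\big(\frac{1}{t}(\widetilde{\Phi}(g\gamma(t)) - \widetilde{\Phi}(g))\big)$, and since $\widetilde{\Phi}$ is $\Cc^1$ the vectors $\frac{1}{t}(\widetilde{\Phi}(g\gamma(t)) - \widetilde{\Phi}(g))$ converge in $\Ec(G)$ to $(D^\lambda_\gamma \widetilde{\Phi})(g)$ as $t \to 0$; applying the continuous linear functional $\check u$ gives $(D^\lambda_\gamma(f*u))(g) = \check u\big((D^\lambda_\gamma\widetilde{\Phi})(g)\big)$. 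Iterating, $(D^\lambda)^n(f*u) = \check u \circ (D^\lambda)^n\widetilde{\Phi}$ as maps $G \times \Lg^n(G) \to \CC$, and continuity of the right-hand side follows from continuity of $(D^\lambda)^n\widetilde{\Phi} \colon G \times \Lg^n(G) \to \Ec(G)$ composed with the continuous map $\check u$. Hence $f * u \in \Cc^n(G)$ for every $n \ge 1$, so $f * u \in \Ci(G)$, and being continuous (the $n=0$ case) it lies in $\Ec(G) = \Ci(G)$ topologized as in Definition~\ref{distrib}.

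I expect the main obstacle to be precisely the bookkeeping in that last step: verifying that the identity $(D^\lambda)^n(f*u) = \check u \circ (D^\lambda)^n\widetilde{\Phi}$ is legitimate, i.e. that the interchange of $\check u$ with the limits defining the iterated derivatives is justified at each stage — which it is, because each such limit is a genuine limit \emph{in the locally convex space} $\Ec(G)$ (this is the content of Proposition~\ref{P 4.27}, packaged into Theorem~\ref{P 4.15}) and $\check u$ is continuous on $\Ec(G)$. No delicate estimates are needed beyond invoking continuity of $\check u$; the real work has already been done in establishing the exponential law and Proposition~\ref{multi}.
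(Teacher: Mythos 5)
Your proposal is correct and follows essentially the same route as the paper: write $f*u=\check u\circ\widetilde{\varphi}$ with $\varphi=f\circ m$, invoke Proposition~\ref{multi} to get $\varphi\in\Ec(G\times G)$ and Theorem~\ref{P 4.15} to get $\widetilde{\varphi}\in\Ci(G,\Ec(G))$, and then conclude by composing with the continuous linear functional $\check u$. The only difference is that you spell out the final step (that a continuous linear functional commutes with the limits defining the iterated derivatives $D^\lambda_\gamma$), which the paper leaves implicit; your justification of it is correct.
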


\begin{proof}
Let $m\colon G\times G\to G$, $m(x,y)=xy $.
By denoting $\check{u}=v\in \Ec'(G) $ we we have $\check{v}=u $
and $(f*u)(x)=\check{u} (f\circ L_x)=v(f\circ L_x) $.
Now  define $\varphi\colon G\times G\to {\mathbb C}$, $\varphi(x,y)=f(xy) $.
Since $\varphi=f\circ m $, it follows by Proposition~\ref{multi} that $\varphi \in \Ci(G\times G)$. 
If we define $\widetilde{\varphi}\colon G\to \Ci(G)$, $\widetilde{\varphi}(x)(y)=\varphi(x,y)$ as in Notation~\ref{N5.2}, 
then by using Theorem \ref{P 4.15} we obtain  
$\widetilde{\varphi} \in \Ci(G,\Ec(G)) $. 

Since $\widetilde{\varphi}(x)(y)=f(xy)=(f\circ L_x)(y)$, we have 
$\widetilde{\varphi}(x)=f\circ L_x$, for all $x\in G $, 
and therefore $f*u=v\circ \widetilde{\varphi} $. 
By using the property $ \widetilde{\varphi}\in\Ci(G,\Ec(G))$ provided by Theorem~\ref{P 4.15}, 
we obtain $f*u\in \Ec(G) $, and this completes the proof.
\end{proof}

We can now prove the following theorem, which extends a well-known property of finite-dimensional Lie groups. 

\begin{theorem}\label{P 4.29}
Let $G$ be any topological group 
and for every 
$u\in \Ec'(G) $ define the linear operator 
$D_u\colon \Ci(G)\to \Ci(G)$, $D_u f=f*u$

Then the operator $\Psi\colon \Ec'_{\1}(G)\to \mathcal{U}(G)$,
$\Psi(u)=D_u $ is well defined, invertible, and its inverse is 
$$\Psi^{-1}\colon \mathcal{U}(G)\to \Ec'_{\1}(G),  \quad 
(\Psi^{-1}(D))(f)=(D\check{f})(\1) 
\text{ for all }f\in \Ci(G)\text{ and }
D\in \mathcal{U}(G). $$ 
\end{theorem}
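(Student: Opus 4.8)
The plan is to verify, in order: (i) $\Psi$ is well defined, i.e.\ for $u\in\Ec'_{\1}(G)$ the operator $D_uf=f*u$ maps $\Ci(G)$ into itself, is linear, continuous, local, and left invariant; (ii) the proposed $\Psi^{-1}$ is well defined, i.e.\ for $D\in\Uc(G)$ the functional $f\mapsto(D\check f)(\1)$ lies in $\Ec'_{\1}(G)$; and (iii) the two maps are mutually inverse.

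For (i): that $D_u$ preserves $\Ec(G)$ is exactly Proposition~\ref{P 4.28}. Continuity of $D_u$ on $\Ec(G)$ follows from the factorization $D_uf=v\circ\widetilde\varphi$ exhibited in the proof of Proposition~\ref{P 4.28} together with the continuity statement in Theorem~\ref{P 4.15} (the map $f\mapsto\widetilde\varphi$ with $\widetilde\varphi(x)=f\circ L_x$ is continuous into $\Ec(G,\Ec(G))$, hence $f\mapsto v\circ\widetilde\varphi$ is continuous since $v\in\Ec'(G)$). Left invariance, $D_u(f\circ L_y)=(D_uf)\circ L_y$, is a direct computation from Definition~\ref{conv_def}: $(D_u(f\circ L_y))(x)=\check u((f\circ L_y)\circ L_x)=\check u(f\circ L_{yx})=(D_uf)(yx)$. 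Locality uses $\supp u=\{\1\}$: if $x\notin\supp f$, then $f\circ L_x$ vanishes on a neighborhood of $\1$, so its reflection $\widecheck{f\circ L_x}$ vanishes near $\1$; since $\supp\check u=\{\1\}$ one gets $\check u(f\circ L_x)=0$, whence $\supp(D_uf)\subseteq\supp f$. Thus $D_u\in\Uc(G)$.

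For (ii): linearity of $D\mapsto(D\check f)(\1)$ in $f$ is clear since $f\mapsto\check f$ is linear (Proposition~\ref{check_lemma}) and $D$ is linear; continuity follows because $f\mapsto\check f$ is continuous $\Ec(G)\to\Ec(G)$, $D$ is continuous $\Ec(G)\to\Ec(G)$, and evaluation at $\1$ is continuous on $\Ec(G)$ (it is the seminorm datum $p_{\emptyset,\{\1\}}$ up to absolute value). That the support of the resulting functional is contained in $\{\1\}$: if $\supp f\subseteq U$ with $\1\notin U$, then $\supp\check f\subseteq U^{-1}$ with $\1\notin U^{-1}$, and locality of $D$ gives $\supp(D\check f)\subseteq U^{-1}$, so $(D\check f)(\1)=0$; hence the only possible support point is $\1$.

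For (iii): compute $\Psi^{-1}(\Psi(u))(f)=(D_u\check f)(\1)=(\check f * u)(\1)=\check u(\check f\circ L_{\1})=\check u(\check f)=u(\widecheck{\check f})=u(f)$, using $\check{\check f}=f$; so $\Psi^{-1}\circ\Psi=\id$. Conversely, for $D\in\Uc(G)$ put $u=\Psi^{-1}(D)$, so $u(f)=(D\check f)(\1)$; then for any $f$, $(\Psi(u)f)(x)=(f*u)(x)=\check u(f\circ L_x)=u(\widecheck{f\circ L_x})$, and since $\widecheck{f\circ L_x}(y)=f(xy^{-1})$, the definition of $u$ gives $u(\widecheck{f\circ L_x})=(D\,\widecheck{\widecheck{f\circ L_x}})(\1)=(D(f\circ L_x))(\1)=((Df)\circ L_x)(\1)=(Df)(x)$, where the crucial step $D(f\circ L_x)=(Df)\circ L_x$ is left invariance of $D$. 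Hence $\Psi(u)=D$, and $\Psi\circ\Psi^{-1}=\id$.

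I expect the main obstacle to be the locality argument for $D_u$ in step (i) and the symmetric support computation in step (ii): one must be careful that $\supp u=\{\1\}$ really forces $u$ (and $\check u$) to annihilate any smooth function vanishing on a neighborhood of $\1$ — this is where one invokes that $u$ is supported at a point, using the definition of $\supp u$ together with the seminorm estimate $|u(f)|\le C\,p_{K_1,K_2}(f)$ with $K_2$ a compact neighborhood basis shrinking to $\{\1\}$; the remaining identities are then routine manipulations with $\check{}$, $L_x$, and evaluation at $\1$.
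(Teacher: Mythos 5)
Your proof is correct and follows essentially the same route as the paper's: the same three steps (well-definedness of $\Psi$, well-definedness of the candidate inverse via the support argument with a neighborhood $U$ of $x\ne\1$ avoiding $\1$, and the two composition identities), with the same computations. Two minor remarks: you explicitly verify continuity of $D_u$ through the factorization $D_uf=\check u\circ\widetilde{\varphi}$ using Proposition~\ref{multi} and Theorem~\ref{P 4.15}, a point the paper's Step~1 leaves implicit; and where the paper simply invokes $\supp(f\ast u)\subseteq(\supp f)(\supp u)$ for locality, you argue it pointwise --- both versions rest on the fact that a functional with $\supp u\subseteq\{\1\}$ annihilates smooth functions vanishing on a neighborhood of $\1$, which you rightly flag as the delicate point, though your suggested justification via a ``compact neighborhood basis $K_2$ shrinking to $\{\1\}$'' does not quite work as stated, since the compact set $K_2$ in the continuity estimate is fixed by the continuity of $u$ and need not be close to $\{\1\}$.
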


\begin{proof}
We organize the proof in three steps.

Step 1: 
We show that $\Psi $ is well defined, 
that is, for all $u\in \Ec'_1(G)$ we have $D_u\in \mathcal{U}(G)$.
In fact, 
$D_u(f\circ L_x)(y)=((f\circ L_x)*u)(y)= 
\check{u}(f\circ L_x\circ L_y) $
and on the other hand 
$(D_u(f)\circ L_x)(y)=(f*u)(xy)=\check{u}(f\circ L_{xy})=
\check{u}(f\circ L_x\circ L_y)
=D_u(f\circ L_x)(y) $, hence we obtain that 
$D_u(f\circ L_x)=D_u(f)\circ L_x $.

From $u\in \Ec'_{\1}(G) $ it follows that 
$\supp u\subseteq \{\1\}\subseteq G $, 
hence 
$$\supp(D_u f)=\supp (f\ast u)\subseteq (\supp f)(\supp u)$$ 
and therefore
 $\supp(D_u f)\subseteq (\supp f) \{\1\}=\supp f $. 
We thus obtain that $D_u\in \Uc(G) $, hence 
$\Psi $ is well defined.

Step 2: 
We show that the mapping 
$$\Phi\colon \Uc(G)\to \Ec'_{\1}(G), \quad 
(\Phi (D))(f)=(D\check{f})(\1) \text{ for all }f\in \Ci(G)\text{ and }
D\in \Uc(G) $$
is well defined, 
that is, for every  
$D\in \Uc(G) $ the functional  
 $u\colon \Ec(G)\to \CC$, 
$u(f)=(D\check{f})(\1) $, satisfies the condition $u\in\Ec'_{\1}(G)$.

To this end note that if $\supp f \subseteq U$ then  
$G\setminus U\subseteq   \{x\in G\mid f(x)=0 \} $.
Now let $x\in G$ be arbitrary with $x\neq \1 $. 
Since the topology of $G$ is assumed to be separated, 
 there exists some open neighborhood  $U$ of $x$ with $\1\notin U $.
For every  $f\in \Ci(G)$ with $\supp f \subseteq U $ 
we have $\supp \check{f}=(\supp f)^{-1} \subseteq U^{-1} $ 
and then $\supp(D \check{f})\subseteq (\supp\check{f})\subseteq U^{-1} $. 
We thus obtain $G\setminus U^{-1}\subseteq  \{y\in G\mid (D \check{f})(y) =0 \}$.

Since $\1\notin U $ and $\1\notin U^{-1} $, we have 
$(D \check{f})(\1)=0 $, hence 
$x\notin \supp u $ for arbitrary $x\in G\setminus\{\1\}$, 
and then $\supp u\subseteq \{\1\} $.
That is, $u\in \Ec'_{\1}(G) $.

Step 3:  
We show that $\Psi\circ  \Phi=\id_{\Uc(G)}$ and 
$\Phi\circ \Psi=\id_{\Ec'_{\1}(G)} $. 

To this end let $D\in \Uc(G) $ arbitrary and denote $\Phi(D)=u $.
We have $u(f)=(D\check{f})(\1) $ and $\Psi(u)=D_u $, 
where 
$$(D_u f)(x)=(f\ast u)(x)=\check{u}(f\circ L_x)=D(f\circ L_x)(\1) 
=((Df)\circ L_x)(\1)=(Df)(x).$$
Hence $D_u f=Df $ and $D_u=D $ and we obtain   
$\Psi\circ  \Phi=\id_{\Uc(G)}$.

Now let $u\in \Ec'_{\1}(G) $ arbitrary. 
We have $\Psi(u)=D_u $.
Denote $\Phi(D_u)=v\in \Ec'_{\1}(G) $.

We have $v(f)=(D_u \check{f})(\1)=(\check{f} *u )(\1)=
\check{u}(\check{f}\circ L_{\1})=\check{u}(\check{f})=u(f) $.
Hence $v=u $ and $\Phi\circ \Psi=\id_{\Ec'_{\1}(G)} $ and 
the proof is complete.
\end{proof}

If $G$ is any pre-Lie group, then one can use Theorem~\ref{P 4.29} for endowing $\Uc(G)$ with a natural topology 
for which the map $\Psi$ is a homeomorphism if $\Ec'_{\1}(G)$ carries the weak dual topology 
which it gets as a closed linear subspace of $\Ec'(G)$ 
(see Definition~\ref{distrib}). 
That topology of $\Uc(G)$ can be equivalently described as the locally convex topology 
determined by the family of seminorms $\{D\mapsto \vert(Df)(\1)\vert\}_{f\in\Ec(G)}$. 

In the statement of the following corollary, we say that some Banach space $\Xc$ admits bump functions 
if there exists $\varphi\in\Ci(\Xc)$ which is equal to $1$ on some neighborhood of $0\in\Xc$, 
has the support contained in some ball, and for every $k\ge 1$ satisfies the condition 
$\sup\limits_{x\in\Xc}\Vert\de^k_x\phi\Vert<\infty$. 
Every Hilbert space admits bump functions; see \cite{WD73} and \cite{LW11} for more details and examples. 
In this setting, we will provide the following partial answer to Problem~\ref{probl1}. 

\begin{corollary}\label{density}
Let $G$ be any Banach-Lie group whose Lie algebra admits bump functions. 
Then $\Uc_0(G)$ is a dense subalgebra of $\Uc(G)$. 
\end{corollary}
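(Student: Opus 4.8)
The plan is to combine Theorem~\ref{P 4.29} with the remark preceding the corollary, which identifies $\Uc(G)\cong\Ec'_{\1}(G)$ as topological vector spaces once $\Ec'_{\1}(G)$ carries its weak dual topology, and with the discussion in Remark~\ref{PBW_rem}. Under that identification the subalgebra $\Uc_0(G)$ corresponds, via $\Psi^{-1}$, to the image of the complexified universal enveloping algebra $U(\gg_\CC)$ of $\gg=\Lg(G)$, because $\Uc_0(G)$ is generated by the operators $D^\lambda_\gamma$ and $\Psi^{-1}(D^\lambda_\gamma)$ is the distribution $f\mapsto (D^\lambda_\gamma\check f)(\1)$, i.e.\ a derivation-type functional supported at~$\1$; products of such operators correspond to the monomials spanning $U(\gg_\CC)$. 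Thus the statement ``$\Uc_0(G)$ is dense in $\Uc(G)$'' is equivalent to the statement that the span of the functionals $\{f\mapsto (D^\lambda_\gamma\check f)(\1)\mid \gamma\in\Lg^k(G),\ k\ge 0\}$ is weak-$*$ dense in $\Ec'_{\1}(G)$.

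First I would make this reduction precise: by the Hahn--Banach theorem (applied to the weak dual topology on $\Ec'(G)$, whose continuous functionals are exactly evaluations at elements of $\Ec(G)$), a subspace $S\subseteq\Ec'_{\1}(G)$ is dense if and only if every $f\in\Ec(G)$ that is annihilated by all of $S$ is also annihilated by all of $\Ec'_{\1}(G)$. So the corollary reduces to the following \emph{separation} claim: if $f\in\Ec(G)$ satisfies $(Df)(\1)=0$ for every $D\in\Uc_0(G)$ (equivalently $(D^\lambda_\gamma f)(\1)=0$ for all finite tuples $\gamma$), then $(Df)(\1)=0$ for every $D\in\Uc(G)$. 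Equivalently: a distribution supported at $\1$ that kills all of $\Uc_0(G)\cdot f|_{\1}$ for every such $f$ must be zero --- but the cleanest formulation is the first one, about $f$'s with vanishing jet at~$\1$.

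Next I would bring in the bump-function hypothesis exactly as in Remark~\ref{PBW_rem}: since $\gg$ is a Banach space admitting bump functions, $G$ is a locally exponential Banach-Lie group, so near $\1$ we may use the exponential chart $\exp\colon \gg\supseteq \Omega\to G$ to transport everything to an open neighborhood of $0\in\gg$. In this chart the operators $D^\lambda_\gamma$ for $\gamma=\exp(tX)$ become directional derivatives $\partial_X$ (up to the usual lower-order Poincar\'e--Birkhoff--Witt corrections), so the condition ``$(Df)(\1)=0$ for all $D\in\Uc_0(G)$'' translates precisely into ``all iterated directional derivatives of $f\circ\exp$ vanish at $0$'', i.e.\ $f\circ\exp$ has vanishing Taylor expansion at $0$ to all orders. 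Meanwhile a distribution supported at $\{0\}$ on the Banach space $\gg$ is, essentially by Peetre-type/local considerations used already in Remark~\ref{rem_loc} (the references \cite{WD73}, \cite{LW11} handle exactly the bump-function setting), a possibly infinite series of ``constant-coefficient'' differential operators evaluated at $0$, hence is determined by the jet of $f$ at $0$. Therefore such a distribution annihilates every $f$ with vanishing jet at~$\1$, which is exactly the separation claim. I would then conclude by Hahn--Banach as above.

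The main obstacle I anticipate is the last ingredient: showing that every element of $\Ec'_{\1}(G)$ is genuinely determined by the Taylor jet at $\1$ of its argument, i.e.\ that in the bump-function Banach setting a compactly-supported distribution concentrated at a point factors through the jet map $\Ec(G)\to J^\infty_{\1}$. This is the exact analogue on Banach manifolds of the classical statement that distributions supported at a point are finite combinations of derivatives of the delta --- but in infinite dimensions one only gets a (possibly infinite) formal series of finite-order pieces, and one must check both that such a series does annihilate functions with flat jet and that every element of $\Ec'_{\1}(G)$ has this form. This is where the continuity estimate from the Remark preceding the corollary (each $u\in\Ec'(G)$ is dominated by a single seminorm $p_{K_1,K_2}$ with $K_1\subseteq\Lg^k(G)$, so $u$ has ``finite order'' in a suitable sense) together with the bump functions (to localize, and to produce, for any prescribed jet at $0$, a smooth function realizing it with bounded support) does the work; carrying out the estimate that a function flat at $\1$ is killed by any such $u$ is the technical heart, and everything else is the soft functional-analytic packaging described above.
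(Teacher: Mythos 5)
Your overall reduction matches the paper's: by Hahn--Banach (equivalently, via the homeomorphism $\Psi$ of Theorem~\ref{P 4.29} and the seminorms $D\mapsto\vert(Df)(\1)\vert$), the corollary comes down to showing that if $f\in\Ec(G)$ has $(Df)(\1)=0$ for all $D\in\Uc_0(G)$ --- i.e.\ $f\circ\exp_G$ has vanishing jet at $0\in\gg$ --- then $(Df)(\1)=0$ for every $D\in\Uc(G)$. You identify this correctly, you identify the exponential chart and the bump-function hypothesis as the ingredients, and you even point at the right references. Up to that point the proposal and the paper coincide.

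The gap is in the one step you yourself flag as the ``technical heart,'' and the mechanism you propose for it is not the one that actually works. You want to prove that every element of $\Ec'_{\1}(G)$ factors through the jet at $\1$, by arguing that the continuity estimate gives $u$ ``finite order'' and that a point-supported distribution is then a (possibly infinite) series of constant-coefficient pieces. In a Banach space the space of $k$-jets at a point is infinite-dimensional, so the classical structure theorem for point-supported distributions does not transfer, and it is not clear that such a series representation exists; moreover the seminorm $p_{K_1,K_2}$ dominating $u$ involves a supremum over a compact set $K_2$ that need not be $\{\1\}$, so finiteness of order does not by itself localize $u$ to the jet at $\1$. The paper never proves (or needs) such a structure theorem. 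Instead it exploits the locality of $D$ as an \emph{operator}: using the bump function $\psi$ and the chart $\log_G$ it defines $Th=D(((\psi h)\circ\log_G)\phi)$, checks that $T$ is a local operator on $\gg$, and invokes the Peetre-type vanishing result \cite[Prop.~3]{LW11}, which says precisely that a local operator on a Banach space admitting bump functions annihilates, at $0$, any function with vanishing jet there. That is a strictly weaker statement than your jet-factorization claim and it is the one available in the literature; to complete your argument you should replace your proposed structure-theorem step by this transport-of-local-operators argument.
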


\begin{proof}
By the Bahn-Banach theorem,  
it suffices to check that if $\theta\colon \Uc(G)\to\CC$ is any continuous linear functional 
 which vanishes on $\Uc_0(G)$, then $\theta=0$.  
To this end note that, by using the above family of seminorms describing 
the topology of $\Uc(G)$, 
one can find $f\in\Ec(G)$ with $\vert\theta(D)\vert\le\vert(Df)(\1)\vert$ for all $D\in\Uc(G)$. 
Then the kernel of the linear functional $D\mapsto (Df)(\1)$ is contained in $\Ker\theta$ 
and, since both these kernels are closed 1-codimensional subspaces of $\Uc(G)$,   
it follows that, after replacing $f$ by $cf$ for a suitable $c\in\CC$, 
we have  $\theta(D)=(Df)(\1)$ for all $D\in\Uc(G)$. 

The assumption $\theta(D)=0$ for all $D\in\Uc_0(G)$ is then equivalent 
to the fact that for all $k\ge 1$ we have $(\de^k(f\circ\exp_G))(0)=0$, 
where $\exp_G\colon\gg\to G$ is the exponential map of~$G$, which is a local diffeomorphism at $0\in\gg$. 
Now the hypothesis that the Lie algebra $\gg$ admits bump functions 
allows us to use \cite[Prop. 3]{LW11}, which ensures that 
for every local operator $T$ on $\gg$ we have $(T(f\circ\exp_G))(0)=0$. 

We will check that $(Df)(\1)=0$ for every local operator $D$ on~$G$. 
To this end pick some open sets $U$ and $V$ for which 
$\exp_G\colon V\to U$ is a diffeomorphism with the inverse denoted by $\log_G$, 
where $\1\in U\subseteq G$ and $0\in V\subseteq\gg$. 
Then use the hypothesis on~$\gg$ to find $\psi\in\Ci(\gg)$ with $\supp\psi\subseteq V$ and 
$\psi=1$ on some neighborhood of $0\in\gg$. 
Denote $\phi:=\psi\circ\log_G\in\Ci(U)$ and extend it with the value~$0$ on $G\setminus U$.  
Then $\phi\in\Ci(G)$, $\supp\phi\subseteq U$, 
 $\phi=1$ on some neighborhood of $\1\in U\subseteq G$, 
and $\psi=\phi\circ\exp_G$. 
Now define 
$$T\colon\Ci(\gg)\to\Ci(\gg), \quad Th=D(((\psi h)\circ\log_G)\phi)$$
where the function $(\psi h)\circ\log_G\in\Ci(V)$ is extended with the value $0$ on $\gg\setminus V$. 
Since $D$ is a local operator, it follows that also $T$ is a local operator, 
and then by the above observation we obtain $(T(f\circ\exp_G))(0)=0$, 
which is equivalent to $(Df)(\1)=0$. 
That is, $\theta(D)=0$ for arbitrary $D\in\Uc(G)$, 
and this concludes the proof. 
\end{proof}

\subsection*{Pre-Lie groups}
In order to illustrate the above general results and to relate them to the earlier literature, 
we conclude by some specific examples of topological groups which can be studied from the perspctive of the Lie theory 
(see also Remark~\ref{PBW_rem}). 
This is the case with the class of pre-Lie groups introduced in \cite{BR80} and \cite{BCR81},  
is closed with respect to several natural operations 
that may not preserve the locally compact or Lie groups, 
as for instance taking closed subgroups, infinite direct products, or projective limits (\cite[Prop. 1.3.1]{BCR81}). 
Some specific pre-Lie groups are briefly mentioned in Examples \ref{ex1}--\ref{ex4} below. 
See also \cite{HM07} and \cite{Gl02b} for further information on Lie theory for topological groups 
which may not be Lie groups. 

\begin{definition}
\label{Def3.1}
\normalfont 
A \emph{pre-Lie group} is any topological group $G$  
satisfying the conditions: 
\begin{enumerate}
\item The topological space $\Lg(G)$ has the structure of a locally convex Lie algebra over~$\RR$, 
whose scalar multiplication, vector addition and bracket 
satisfy the following conditions for all 
$t,s\in{\mathbb R}$ and $\gamma_1,\gamma_2\in\Lg(G)$: 
\begin{eqnarray}
\nonumber
 (t\cdot \gamma_1)(s) &=& \gamma_1(ts);  \\
\nonumber
(\gamma_1+\gamma_2)(t) &=& \lim\limits_{n\to\infty}(\gamma_1(t/n)\gamma_2(t/n))^n;\\
\nonumber
[\gamma_1,\gamma_2](t^2) &=& \lim\limits_{n\to\infty}(\gamma_1(t/n)\gamma_2(t/n)\gamma_1(-t/n)\gamma_2(-t/n))^{n^2}, 
\end{eqnarray}
where the convergence is assumed to be uniform on the compact subsets of $\RR$. 
\item 
For every nontrivial $\gamma\in\Lg(G)$ there exists a function $\varphi$ of class $\Ci$ on 
some neighborhood of $\1\in G$ such that $(D^\lambda_\gamma\varphi)(\1)\ne0$.  
\end{enumerate}
\end{definition}

\begin{remark}\label{obs 3.11}
 \normalfont
If $G$ is a pre-Lie group, then the multiplication mapping 
$m\colon G\times G\to G$, $(x,y)\mapsto xy$,  
is smooth by \cite[Th. 1.3.2.2 and subsect. 1.1.2]{BCR81} 
(or alternatively \cite[Th. and Sect. 1]{BR80}). 
In particular, by using the chain rule contained in 
condition~(dcm) of \cite[subsect. 1.3.2]{BCR81} 
(or alternatively the proof of (v) in \cite[Th.]{BR80}), 
we easily recover in this special case the result of Proposition~\ref{multi}, to the effect that 
that for any locally convex space $\Yc$ the linear mapping 
$\Ec(G,\Yc)\to\Ec(G\times G,\Yc)$, $\varphi\mapsto\varphi\circ m$, 
is well-defined and continuous. 
\end{remark}

\begin{example}\label{ex1}
\normalfont
Every locally compact group (in particular, every finite-dimensional Lie group) is a pre-Lie group 
(\cite[pag. 41--41]{BCR81}). 
In this special case our Theorem~\ref{P 4.29} agrees with \cite[Th. 1.4]{Ed88} and 
\cite[Cor. 2.6]{Ak95a}. 
\end{example}

\begin{example}\label{ex2}
\normalfont
Every Banach-Lie group is a pre-Lie group 
(\cite[pag. 41--41]{BCR81}). 
In this special case, we are unable to provide any earlier reference 
for the result of our Theorem~\ref{P 4.29}. 
\end{example}

\begin{example}\label{ex3}
\normalfont
If $\Xc$ is any locally convex space, then the abelian locally convex Lie group $(\Xc,+)$ 
is a pre-Lie group 
(\cite[pag. 41--41]{BCR81}). 
In this special case, we are again unable to provide any precise earlier reference 
for the result provided by our Theorem~\ref{P 4.29}. 
See however \cite[Th. 3.4]{Du73} for a related result on real Hilbert spaces. 
\end{example}

\begin{example}\label{ex4}
\normalfont
Other examples of locally convex Lie groups 
which are pre-Lie groups are the so-called groups of $\Gamma$-rapidly decreasing mappings 
with values in some Lie groups 
(\cite[Subsect. 4.2.2]{BCR81}). 

For the sake of completeness, we will briefly recall the construction of 
the aforementioned groups of rapidly decreasing mappings, in a very special situation. 
Let $n\ge 1$ be any integer and $\Gamma=\{\gamma_k\mid k\ge 0\}$, 
where 
$$(\forall k\ge 0)\quad \gamma_k(\cdot)=(1+\vert\cdot\vert)^k$$
and $\vert\cdot\vert$ stands for the Euclidean norm on $\RR^n$. 
Let $(\Ac,\Vert\cdot\Vert)$ be any unital associative Banach algebra with some fixed norm that defines its topology, 
and $\Ac^\times$ denote the group of invertible elements in $\Ac$, 
and consider the set of mappings 
$$\Sc(\RR^n,\Ac;\Gamma):=\{f\in\Ci(\RR^n,\Ac^\times)\mid 
\lim\limits_{\vert x\vert\to\infty}f(x)=\1;\ 
(\forall \alpha\in\NN^n)\quad \sup\limits_{\RR^n}\gamma_k(\cdot)\Vert\partial^\alpha f(\cdot)\Vert<\infty\} $$
endowed with the pointwise multiplication and inversion, 
where we denote by $\partial^\alpha$ the partial derivatives. 
Then the group of $\Gamma$-rapidly decreasing $\Ac^\times$-valued mappings  $\Sc(\RR^n,\Ac;\Gamma)$
 has the natural structure of a pre-Lie group. 
This follows as a very special case of 
 \cite[Cor. 4.1.1.7 and Th. 4.2.2.3]{BCR81}. 
\end{example}





\bigskip

\end{document}